    \let\subsubsection\subparagraph
    \title  {Filtrations on instanton homology}
    \author {P. B. Kronheimer and T. S. Mrowka%
                
            \thanks{%
            The work of the first author
            was supported by the National Science Foundation through
            NSF grant DMS-0904589. The work of
            the second author was supported by NSF grant DMS-0805841.}} 
    \address {Harvard University, Cambridge MA 02138 \\
              Massachusetts Institute of Technology, Cambridge MA 02139}       
\begin{document}

\maketitle
 
\tableofcontents

\section{Introduction}

The Khovanov cohomology $\kh(K)$ of an oriented knot or link is defined
in \cite{Khovanov} as the cohomology of a cochain complex $(\bC=\bC(D),
d_{\kh})$ associated to a plane diagram $D$ for $K$:
\[
        \kh(K) = H\bigl(\bC, d_{\kh}\bigr).
\]
The free abelian group $\bC$ carries both a cohomological grading
$h$ and a quantum grading $q$. The differential $d_{\kh}$ increases
$h$ by $1$ and preserves $q$, so that the Khovanov cohomology is
bigraded. We write $\cF^{i,j}\bC$ for the decreasing filtration
defined by the bigrading, so $\cF^{i,j}\bC$ is generated by
elements whose cohomological grading is not less than $i$ and whose
quantum grading is not less than $j$. In general, given abelian groups
with a decreasing filtration indexed by $\Z\oplus\Z$, we
will say that a group homomorphism $\phi$ has \emph{order} 
$\ge(s,t)$ if $\phi(\cF^{i,j})\subset \cF^{i+s,j+t}$. So
$d_{\kh}$ has order $\ge(1,0)$.

In \cite{KM-unknot}, a new invariant $\Isharp(K)$ was defined using
singular instantons, and it was shown that $\Isharp(K)$is related to
$\kh(K^{\dag})$ through a spectral sequence. The notation
$K^{\dag}$ here denotes the mirror image of $K$. Building on the
results of \cite{KM-unknot}, we establish the following theorem in
this paper.

\begin{theorem}\label{thm:main}
    Given an oriented  link $K$ in $\R^{3}$, and a
    diagram $D^{\dag}$ for $K^{\dag}$,
    one can
    construct  a differential $d_{\sharp}$ on the Khovanov complex
    $\bC=\bC(D^{\dag})$ such that the homology of $(\bC, d_{\sharp})$ is
    $\Isharp(K)$. The differential $d_{\sharp}$ is equal to
    $d_{\kh}=d_{\kh}(D^{\dag})$ 
    to leading order in the $q$-filtration: 
    that is both differentials have order $\ge(1,0)$, 
    and the difference $d_{\sharp} - d_{\kh}$ has
    order $\ge(1,2)$.
\end{theorem}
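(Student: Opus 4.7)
The plan is to construct $d_\sharp$ as the total differential of the cube-of-resolutions complex from \cite{KM-unknot}, and then to equip this cube with a quantum filtration matching Khovanov's to leading order. For the diagram $D^\dagger$ with $n$ crossings, the vertices $v \in \{0,1\}^n$ of the cube carry complete resolutions $L_v$, each an unlink; the vertex group is $\Isharp(L_v)$, edge maps come from saddle cobordisms between adjacent resolutions, and higher-codimension face maps come from multi-crossing moduli contributions. That this total complex already computes $\Isharp(K)$ is established in \cite{KM-unknot}, so the remaining task is solely to introduce a compatible $q$-filtration on $\bC$ and verify the filtration shifts asserted in the theorem.

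My first step would be to identify each $\Isharp(L_v)$ with the Khovanov vertex group $V^{\otimes c_v}$, where $c_v$ is the number of components of $L_v$, using the unknot calculation of \cite{KM-unknot} together with a disjoint-union property for unlinks. I would then define the $q$-filtration on $\Isharp(L_v)$ as the pullback of the quantum filtration on $V^{\otimes c_v}$, shifted by the usual Khovanov factor depending on $|v|$ and the writhe, and check that this filtration is intrinsic to the gauge theory up to terms of strictly higher filtration level, so that it descends to a well-defined filtration on the total complex. A natural candidate for an intrinsic description uses the relative singular instanton index for cobordisms from a fixed reference unlink, dimension-shifting by a controlled multiple of $2$.

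Next I would analyze how the cube differential interacts with this filtration. On an edge, the cobordism is a saddle, inducing topologically either a merge or a split of circles; using the dimension formula for singular instanton moduli on the saddle together with a leading-order count, one should show that the induced map on $\Isharp$ agrees with the Khovanov merge $V \otimes V \to V$ or split $V \to V \otimes V$ modulo a correction of $q$-order at least $2$. A parallel index count for the higher-codimension face maps shows that these also shift $q$ by at least $2$ beyond the (vanishing) Khovanov contribution, so every term of $d_\sharp - d_{\kh}$ has order $\ge(1,2)$ as required. The main obstacle is the edge-map step: one has to pin down the $q$-filtration on $\Isharp$ precisely enough that the leading instanton count can simultaneously be identified with Khovanov's combinatorial formula and be controlled up to the next filtration level. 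This is where the gauge-theoretic content of the theorem concentrates, and it is the step I would expect to occupy most of the argument.
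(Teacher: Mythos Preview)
Your overall architecture matches the paper's: build the cube complex with total differential $\bF$, establish the filtration bounds via dimension formulae for instanton moduli spaces, and identify the leading-order part with Khovanov. The index-count argument you sketch for the higher face maps is essentially the content of Corollary~\ref{cor:proto-q-inequality} and Proposition~\ref{prop:C-is-filtered}, and works as you describe.

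The genuine gap is in the edge-map step, and it is not merely a matter of ``pinning down the filtration precisely enough.'' There are two different identifications of $\Isharp(L_v)$ with $V^{\otimes c_v}$, and your proposal conflates them. One, call it $\gamma$, comes from the K\"unneth/excision argument of \cite{KM-unknot}; this is the identification under which the $E_1$ page of the $h$-spectral sequence was already shown there to be Khovanov's complex. The other, call it $\beta$, comes from choosing a Morse function on the representation variety $\Rep(L_v,\bi)\cong (S^2)^{c_v}$; this is the identification under which the $Q$-grading, and hence the $q$-filtration, is intrinsically defined. The dimension formula controls filtration shifts relative to $\beta$, while the Khovanov identification is established relative to $\gamma$. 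These do not agree on the nose: the composite $\epsilon=\gamma\circ\beta^{-1}$ is only filtration-preserving, with higher-order corrections $\epsilon_1+\epsilon_2+\cdots$ coming from positive-action instantons on the excision cobordism.

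The paper's resolution is to prove that $\epsilon$ respects the $Q$-filtration and is the standard map on the associated graded. Writing $\bF^{0}$ for the $q$-preserving part of $\bF$, one checks via the dimension formula that $\bF^{0}$ increases $h$ by exactly $1$ (Lemma~\ref{lem:h-by-1}), and then the known isomorphism $\boldsymbol{\gamma}:(\bC,\bF_e)\to(\bC(D^\dag),d_{\kh})$ descends on associated gradeds to the isomorphism $\boldsymbol{\beta}$, which therefore intertwines $\bF^{0}$ with $d_{\kh}$. Your proposed ``leading-order count'' on the saddle would amount to reproving the excision-based identification from \cite{KM-unknot} by a direct moduli computation; without that input, nothing in your outline forces the leading term to equal the Khovanov merge/split rather than some other filtration-preserving map.
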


The differential $d_{\sharp}$ depends (a priori) on more than just a
choice of diagram for $K$. It depends also on choices of
perturbations and metrics, required to make moduli spaces of
instantons transverse. The fact that, with appropriate choices, 
the complex which computes
$\Isharp$ can be made to coincide with $\bC$ as an abelian group is
easily seen from the authors' earlier paper  \cite{KM-unknot}. 
The new content in the above theorem is that
$d_{\sharp}-d_{\kh}$ has order $\ge2$ with respect to the quantum
filtration. (The quantum degree is of constant parity on the complex
$\bC$, so order $\ge2$ is inevitable once the leading-order parts agree.)

 As a consequence of the above theorem, 
the filtrations by $i$ and $j$ lead to two
    spectral sequences abutting to $\Isharp(K)$, whose $E_{2}$
    and $E_{1}$ terms respectively are both isomorphic to
    $\kh(K^{\dag})$. Indeed, there is such a spectral sequence for every
    positive linear combination of $i$ and $j$. The next theorem
    addresses the topological invariance of these spectral sequences.
    We write $\mathfrak{C}$ for the category whose objects are
    finitely-generated differential abelian groups filtered by
    $\Z\oplus\Z$ with differentials of order $\ge(1,0)$, and whose
    morphisms are differential group homomorphisms of order $(0,0)$ up
    to chain-homotopies of order $\ge(-1,0)$.

\begin{theorem}\label{thm:invariance}
   In the category
    $\mathfrak{C}$, the isomorphism class of $(\bC, d_{\sharp})$
    depends only on the oriented link $K$.
\end{theorem}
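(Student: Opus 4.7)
\emph{Proof proposal.} The plan is to follow the two-step strategy standard for invariance arguments in gauge-theoretic link homology. First I would establish that, for a fixed diagram $D^{\dag}$ of $K$, the $\mathfrak{C}$-isomorphism class of $(\bC,d_{\sharp})$ is independent of the auxiliary metric and perturbation choices entering the definition of $d_{\sharp}$. Second, I would verify invariance under the Reidemeister moves; combined with Reidemeister's theorem, this yields the result.

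For the first step, given two admissible sets of auxiliary data producing differentials $d_{\sharp}^{0}$ and $d_{\sharp}^{1}$ on the same underlying bigraded group $\bC(D^{\dag})$, I would interpolate between them via a generic one-parameter family and, by counting rigid elements of the associated parametrized moduli spaces on the cylinder $S^{3}\times\R$ carrying the product link, define a continuation-type chain map
\[
\Phi\colon (\bC,d_{\sharp}^{0})\longrightarrow(\bC,d_{\sharp}^{1}).
\]
A dimension and energy analysis directly parallel to the one behind Theorem~\ref{thm:main} should show that $\Phi$ has order $\ge(0,0)$ and that $\Phi-\mathrm{id}$ has order $\ge(0,2)$. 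Since $\bC$ is finite-dimensional, the $q$-filtration is bounded and $\Phi-\mathrm{id}$ is nilpotent, so a geometric-series argument---supplemented by the chain homotopies of order $\ge(-1,0)$ produced by a two-parameter family---exhibits $\Phi$ as an isomorphism in $\mathfrak{C}$.

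For the second step, each Reidemeister move is realised by an oriented link cobordism $\Sigma$ of genus zero in $S^{3}\times[0,1]$ connecting $D^{\dag}_{1}$ to $D^{\dag}_{2}$. Following the framework of \cite{KM-unknot}, the pair $(\Sigma,\text{data})$ induces a chain map
\[
\Psi\colon (\bC(D^{\dag}_{1}),d_{\sharp})\longrightarrow(\bC(D^{\dag}_{2}),d_{\sharp}).
\]
The key claim, a cobordism-level analogue of Theorem~\ref{thm:main}, is that $\Psi$ has order $\ge(0,0)$ and that $\Psi-\Psi_{\kh}$ has order $\ge(0,2)$, where $\Psi_{\kh}$ is the Khovanov cobordism map associated to the same combinatorial Reidemeister move. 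Granted this, since $\Psi_{\kh}$ is a filtered chain homotopy equivalence by Khovanov's invariance theorem, the same nilpotency argument as before forces $\Psi$ to be an isomorphism in $\mathfrak{C}$.

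The main obstacle will be establishing this leading-order agreement between the instanton cobordism map $\Psi$ and its Khovanov counterpart $\Psi_{\kh}$. This amounts to extending the moduli-theoretic analysis behind Theorem~\ref{thm:main}, where the only cobordism present is the cylinder, to the nontrivial saddle, birth, and death cobordisms appearing in Reidemeister moves: one must identify the leading contributions to $\Psi$ from low-dimensional singular instanton moduli spaces over standard local models near the critical points of $\Sigma$, verify that they reproduce the Khovanov edge maps, and rule out spurious $q$-filtration shifts from reducible configurations or higher-index strata. This comprises the technical core of the argument.
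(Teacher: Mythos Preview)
Your route differs substantially from the paper's, and there is a genuine gap in the second step.  The paper never compares the instanton Reidemeister map with the Khovanov map $\Psi_{\kh}$ and never invokes Khovanov's invariance theorem as input.  Instead, it decomposes each Reidemeister move as ``drop one or two crossings, isotope the strand, add crossings back'' (Corollary~\ref{cor:plane-diagram-iso} and Proposition~\ref{prop:isotopy-cube}).  For the crossing-drop step it writes down explicit chain maps $\Psi,\Phi$ built from the cube maps $\bF_{ij}$ (equations~\eqref{eq:Psi-def}, \eqref{eq:Phi-def}) and proves directly, via the instanton-theoretic chain homotopies $\bF_{2,-1}$, $\bH_{2,-1}$, $\MATHBF{L}$ coming from larger families of metrics, that $\Psi\Phi$ and $\Phi\Psi$ are homotopic to the cylinder map $\bT$, which is manifestly invertible.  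The point of the proof of Theorem~\ref{thm:invariance} is then simply that the \emph{same} chain maps and homotopies were used when establishing invariance in $\mathfrak{C}_{q}$ and in $\mathfrak{C}_{h}$ separately, so their orders in the two filtrations can be combined to give morphisms in $\mathfrak{C}$.

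The gap in your argument is the inference ``$\Psi-\Psi_{\kh}$ has order $\ge(0,2)$ and $\Psi_{\kh}$ is a Khovanov equivalence, hence $\Psi$ is an isomorphism in $\mathfrak{C}$.''  In your first step the leading term was the identity, which is a $d_{\sharp}$-chain map, so $\Phi-\mathrm{id}$ nilpotent really does give an inverse \emph{as a $d_{\sharp}$-chain map}.  Here $\Psi_{\kh}$ is only a $d_{\kh}$-chain map between two different groups; its Khovanov homotopy inverse is not a $d_{\sharp}$-chain map, so you cannot form $\Psi_{\kh}^{-1}\Psi$ and run the same nilpotency argument.  The abstract lemma you would need --- a filtered chain map inducing an equivalence on the associated graded is a filtered equivalence --- is available for a \emph{single} filtration, and applied to $q$ it produces an inverse and homotopies with no control whatsoever on their $h$-order; but morphisms in $\mathfrak{C}$ require homotopies of order $\ge(-1,0)$.  (There is also a smaller issue: a Reidemeister move is an isotopy, so the cobordism $\Sigma$ is a cylinder and contains no saddles, births, or deaths; the chain-level map between cubes of different sizes is not a ``cobordism map'' in the naive sense but is precisely the drop-crossing map the paper constructs.)  The paper avoids all of this by producing the inverse and the homotopies explicitly from moduli spaces and reading off both filtration orders from the dimension formula~\eqref{eq:dim-simple}.
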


From the above theorem, it follows that the various pages of the
resulting spectral sequences are invariants of $K$, as the next
corollary states. (A similar result for a related spectral sequence
\cite{OS-double-covers} involving the Heegaard Floer homology of the branched
double cover was established by Baldwin \cite{Baldwin}.)

\begin{corollary}
    For the homological and quantum  filtrations by $i$
    and $j$, the isomorphism type of the all the pages $(E_{r},
    d_{r})$, for $r\ge 2$ or $r\ge 1$ respectively, are invariants of
    $K$. More generally, for the filtration by $a i + bj$ with
    $a,b\ge 1$, the same is true when $r \ge a+1$.
\end{corollary}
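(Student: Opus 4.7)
The plan is to deduce the corollary from Theorem~\ref{thm:invariance} by a standard argument about filtered chain maps and their behaviour on spectral-sequence pages: once we know $(\bC, d_\sharp)$ is well-defined up to chain homotopies of order $\ge(-1,0)$, we only need to check that those homotopies are ``short'' enough to descend to each of the pages in the asserted range.

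First I would collapse the bigraded filtration into a single filtration $\cF^{n} = \sum_{ai'+bj'\ge n}\cF^{i',j'}\bC$ adapted to the combination $ai+bj$. With respect to this filtration $d_\sharp$ has order $\ge a$, and a chain homotopy $h$ of order $\ge(-1,0)$ has order $\ge -a$. The pure $i$- and $j$-filtrations correspond to the degenerate boundary values $(a,b)=(1,0)$ and $(0,1)$ and fit into the same framework, recovering the ranges $r\ge 2$ and $r\ge 1$ asserted in the first sentence of the corollary.

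The next step is the key lemma: for order-$(0,0)$ chain maps $f,g$ between two such filtered complexes and a chain homotopy $h$ of order $\ge -a$ with $f-g = d'_\sharp h + h d_\sharp$, the induced maps $f_*$ and $g_*$ agree on $E_{a+1}$. This is a direct bookkeeping argument on the standard presentation of the page: if $x\in\cF^n$ with $d_\sharp x\in\cF^{n+a+1}$ represents a class in $E_{a+1}^n$, then $h(x)\in\cF^{n-a}=\cF^{n-(a+1)+1}$, so $d'_\sharp h(x)$ lies in the boundary subgroup defining $E_{a+1}^n$, while $hd_\sharp(x)\in\cF^{n+1}$ is absorbed by the subfiltration in the same quotient. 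Equality on $E_{a+1}$ then propagates automatically to every $E_r$ with $r\ge a+1$ because those pages are formed as successive subquotients of $E_{a+1}$.

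Finally I would apply the lemma to the chain equivalences supplied by Theorem~\ref{thm:invariance}: any two realisations of $(\bC(D^\dag),d_\sharp)$ are related by order-$(0,0)$ chain maps $f,g$ whose compositions $gf$ and $fg$ differ from the identity by differentials of homotopies of order $\ge(-1,0)$, and the lemma then forces $f_*$ and $g_*$ to be mutually inverse isomorphisms on $(E_r,d_r)$ for every $r\ge a+1$. The main technical nuisance I foresee is keeping the filtration indices straight in the cycle/boundary analysis on $E_{a+1}$; this kind of computation is notoriously prone to off-by-one errors, but no substantive input beyond Theorem~\ref{thm:invariance} is required.
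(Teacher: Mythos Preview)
Your proposal is correct and is essentially the same approach as the paper's: the paper simply cites \cite[Chapter~IX, Proposition~3.5]{Maclane} for the standard fact that filtered chain maps which are chain-homotopic via a homotopy of order $\ge -a$ induce the same map on $E_r$ for $r\ge a+1$, and you have spelled out that argument directly. Your bookkeeping on the $E_{a+1}$ page is right (in particular, $Z_a^{n+1}=\cF'^{n+1}$ because $d'_\sharp$ has order $\ge a$, which is what makes the $hd_\sharp(x)$ term harmless), so no substantive gap remains.
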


\begin{proof}[Proof of the corollary]
    See  for example \cite[Chapter IX, Proposition 3.5]{Maclane}, though
    the notation there uses increasing filtrations rather than the
    decreasing filtrations of this paper.
\end{proof}

\begin{corollary}
    The homological and quantum filtrations $i$ and $j$ give rise to
    filtrations of the instanton homology $\Isharp(K)$, as do the
    combinations $ai + bj$. These filtrations are invariants of $K$. \qed
\end{corollary}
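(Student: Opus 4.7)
The plan is to extract this corollary formally from Theorems~\ref{thm:main} and~\ref{thm:invariance}; no new input from gauge theory is needed. By Theorem~\ref{thm:main}, $\Isharp(K)$ is the homology of the complex $(\bC,d_{\sharp})$, and since $d_{\sharp}$ has order $\ge(1,0)$ it preserves each of the one-parameter decreasing filtrations by $i$, by $j$, and by $ai+bj$ with $a,b\ge 1$. For any such filtered complex there is a canonical induced filtration on homology, given in the $i$-case (say) by
\[
   \cF^{i}\Isharp(K) := \mathrm{image}\bigl(H(\cF^{i,-\infty}\bC,\,d_{\sharp})\to H(\bC,d_{\sharp})\bigr),
\]
and analogously for $j$ and for $ai+bj$. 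This produces the filtrations whose existence is asserted.

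The remaining point is invariance, and for this I would verify that passage to filtered homology descends to a functor from $\mathfrak{C}$ into the category of filtered abelian groups. A morphism of $\mathfrak{C}$ is represented by a chain map $\phi$ of order $\ge(0,0)$, which carries $\cF^{i,j}\bC$ into $\cF^{i,j}\bC'$ and hence induces a filtration-preserving map on homology. Two representatives $\phi,\psi$ of the same morphism differ by $\phi-\psi=d'h+hd$ for some $h$ of order $\ge(-1,0)$, and any such chain homotopy induces the zero map on homology by the usual formal argument; the order condition on $h$ is not needed for that step, so it is certainly compatible with it. Thus homology is a well-defined functor out of $\mathfrak{C}$, and an isomorphism in $\mathfrak{C}$ gives rise to an isomorphism of the induced filtered abelian groups.

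Combining these observations with Theorem~\ref{thm:invariance} shows that, up to filtered isomorphism, the filtered group $\Isharp(K)$ equipped with any of the filtrations by $i$, $j$, or $ai+bj$ is an invariant of the oriented link $K$. I do not expect any real obstacle here: once Theorems~\ref{thm:main} and~\ref{thm:invariance} are in place, the argument is purely categorical bookkeeping, and the only mild subtlety is checking that homotopies of order $\ge(-1,0)$ (rather than $\ge(0,0)$) still kill the induced map on homology, which they do.
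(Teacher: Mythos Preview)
Your argument is correct, and it is essentially the routine verification the paper leaves implicit (the corollary carries only a \qed). The paper gives no separate proof here; the intended reasoning is exactly what you wrote: Theorem~\ref{thm:main} gives $\Isharp(K)=H(\bC,d_\sharp)$ with $d_\sharp$ of order $\ge(1,0)$, so each one-parameter filtration descends to homology, and Theorem~\ref{thm:invariance} then supplies invariance because a chain map of order $\ge(0,0)$ induces a filtration-preserving map on homology and chain-homotopic maps induce the same map on homology regardless of the order of the homotopy.
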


Our results leave open a functoriality question for the pages
$(E_{r},d_{r})$ of the spectral sequences. For example,
Theorem~\ref{thm:invariance} does not imply that there is a functor to
$\mathfrak{C}$ from the category whose objects are oriented links and
whose morphisms are isotopies.  We do expect that a result of this
sort is true however. The issue is similar to the ones that arise in
proving that Khovanov cohomology is functorial \cite{Jacobsson}.
  
We do know that the homology groups $\Isharp(K)$ are functorial for
cobordisms \cite{KM-unknot}.  Thus, if $K_{1}$ and $K_{0}$ are
oriented links and $S \subset [a,b]\times \R^{3}$ is a cobordism from
$K_{1}$ to $K_{0}$ (which we allow to be non-orientable), then there
is a map
\[
            \Isharp(S) :\Isharp(K_{1}) \to
           \Isharp(K_{0})
\]
which is well-defined up to an overall sign. The following proposition
describes how the filtrations behave under such a map.

    \begin{proposition}\label{prop:order-of-map}
        The map $\Isharp(S) :\Isharp(K_{1}) \to
           \Isharp(K_{0})$ resulting from a cobordism $S$ is represented
        at the chain level by a map $\bC_{1} \to \bC_{0}$ of order
    \[ \ge \left(\tfrac{1}{2}(S\cdot S), \chi(S)+\tfrac{3}{2}(S\cdot S)\right) . \]
Here the term $S\cdot S$ is the self-intersection number of the
surface $S$ defined with respect to a push-off which, at the
two ends, has total linking number $0$ with $K_{1}$ and $K_{0}$
respectively. 
    \end{proposition}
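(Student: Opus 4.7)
The plan is to read the grading shift directly from the dimension formula for the singular-instanton moduli spaces on the cobordism $([a,b]\times\R^{3},S)$. Recall from \cite{KM-unknot} that, after choosing generic metrics and holonomy perturbations, the chain-level map representing $\Isharp(S)$ has matrix entry $\langle\Isharp(S)\,x,\,y\rangle$ equal to a signed count of the zero-dimensional components of a moduli space $M(S;x,y)$ of singular instantons on the cobordism, asymptotic at the two ends to critical points $x$ and $y$ of the perturbed Chern--Simons functional. After the identifications of the two $\Isharp$-complexes with Khovanov complexes for diagrams of $K_{1}^{\dag}$ and $K_{0}^{\dag}$ supplied by Theorem~\ref{thm:main}, each critical point $x$ carries a homological grading $h(x)$ and a quantum grading $q(x)$.

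Next, I would invoke the dimension formula for $M(S;x,y)$. For singular instantons of a fixed topological type, this formula expresses $\dim M(S;x,y)$ as an affine function of $h(y)-h(x)$, $q(y)-q(x)$, $\chi(S)$, and $S\cdot S$, plus a non-negative contribution from the instanton energy on each connected component of the cobordism. The only components contributing to the chain map are the zero-dimensional ones, so imposing $\dim = 0$ and using the non-negativity of the energy yields the lower bounds
\[
    h(y)-h(x) \;\ge\; \tfrac{1}{2}(S\cdot S), \qquad
    q(y)-q(x) \;\ge\; \chi(S)+\tfrac{3}{2}(S\cdot S).
\]
Since every basis element $y$ appearing in $\Isharp(S)\,x$ satisfies both of these inequalities, the chain-level map $\bC_{1}\to\bC_{0}$ has the claimed order with respect to the bigrading.

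The main obstacle is pinning down the coefficients in the dimension formula and checking their compatibility with the push-off convention for $S\cdot S$ stated in the proposition. The self-intersection number is well-defined only once a framing of the normal bundle of $S$ along $\partial S$ is chosen, and the zero-linking push-off is precisely the convention compatible with the reducible basepoint connection that enters the construction of $\Isharp$. Under this convention, the only topological parameters entering the moduli-space dimension formula reduce to $\chi(S)$ and $S\cdot S$, with the coefficients displayed above. A useful secondary check, for fixing signs and normalizations, is to verify the formula on elementary pieces---a cylindrical identity cobordism, a $1$-handle attachment, and a neighborhood of an interior double point contributing to $S\cdot S$---and then appeal to the additivity of $\chi$ and $S\cdot S$ under composition to conclude in general.
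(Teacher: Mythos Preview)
Your primary approach has a genuine gap: the complexes $\bC_{1}$ and $\bC_{0}$ in the statement are the \emph{cube} complexes supplied by Theorem~\ref{thm:main}, not the instanton complexes for $K_{1}$ and $K_{0}$ themselves. A generator $x$ of $\bC_{1}$ is a critical point for the perturbed Chern--Simons functional on some unlink resolution $(K_{1})_{v}$, not on $K_{1}$. Consequently the chain-level representative of $\Isharp(S)$ is \emph{not} obtained by counting instantons in moduli spaces $M(S;x,y)$ on the cobordism $S$ between $K_{1}$ and $K_{0}$; there is no such moduli space, because $x$ and $y$ do not live over $K_{1}$ and $K_{0}$. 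So the dimension formula you invoke, with $h(x)$, $q(x)$ appearing directly, does not exist in the form you need. The bounds on $h$ and $q$ are not analytic facts about a single moduli problem on $S$; they are combinatorial facts about how the cube gradings on $\bC_{1}$ and $\bC_{0}$ compare.

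What you call a ``secondary check'' is in fact the argument. The paper decomposes $S$ into elementary handles and treats only the index-$1$ case in detail. The key move is to realize that handle as a single crossing change: one manufactures a link $K_{2}$ with diagram crossing-set $N$ containing a distinguished crossing $c_{*}$, so that $K_{1}$ and $K_{0}$ are the $1$- and $0$-resolutions of $K_{2}$ at $c_{*}$. The chain map representing $\Isharp(S)$ is then precisely the off-diagonal block $\bF_{10}$ of the cube differential $\bF(K_{2},N)$, and its order with respect to the $(h_{2},q_{2})$-filtration on $\bC(K_{2},N)$ is already $\ge(1,0)$ by the results of sections~\ref{sec:q-cube} and~\ref{sec:h}. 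The remaining work is bookkeeping: compare the gradings $(h_{2},q_{2})$ restricted to $\bC_{1}$ and $\bC_{0}$ with the intrinsic gradings $(h_{1},q_{1})$ and $(h_{0},q_{0})$ coming from the diagrams $(K_{1},N')$ and $(K_{0},N')$. This difference is governed by the writhes of the two diagrams, and the identity $S\cdot S = w_{1}-w_{0}$ (for the zero-linking framing) converts the writhe discrepancy into the $S\cdot S$ terms in the proposition.
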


Note that the self-intersection number which appears in the
proposition 
is zero if $S$ is an
oriented cobordism, and is always even. These results for the filtrations of
$\Isharp$ should be compared to the corresponding statements for
Khovanov homology, where it is known that an orientable cobordism $S$
gives rise to a map that preserves the homological grading and maps
elements of quantum degree $j$ to elements of degree $j + \chi(S)$.

\begin{remark}
    There is also a reduced version of the instanton homology, denoted
    by $\Inat(K)$ in \cite{KM-unknot}, which is related to the reduced
    Khovanov homology $\khr(K^{\dag})$.  Theorem~\ref{thm:main} and
    Proposition~\ref{prop:order-of-map} can be formulated for these
    reduced versions with no essential change to the wording.
\end{remark}

The remainder of this paper is organized as follows. In sections
\ref{subsec:unlinks} through \ref{sec:reidemeister}, we focus on the
$q$-filtration. Section~\ref{subsec:unlinks} introduces a quantum
filtration on the instanton homology of an
unlink. 
Section~\ref{sec:cube} reviews the ``cube of resolutions''
in the context of instanton homology, from \cite{KM-unknot}, and this
is used in the following section to extend the $q$-filtration to the
case of a general link. Rather than working with traditional diagrams
(plane projections) of links, we introduce the slightly more flexible
notion of  a \emph{pseudo-diagram}
(Definition~\ref{def:pseudo-diagram}). With one additional hypothesis
on the pseudo-diagram, we prove that the differential on the cube
complex preserves the $q$-filtration
(Proposition~\ref{prop:C-is-filtered}). Sections \ref{sec:isotopy} through
\ref{sec:dropping-crossing} examine how the $q$-filtration behaves when a
pseudo-diagram is altered by an isotopy or by adding or dropping
crossings, and we can then treat Reidemeister moves by regarding a
single Reidemeister move as a sequence of isotopies and add-drops of
crossings.  The $h$-filtration is somewhat simpler to deal with than
the $q$-filtration, but follows the same outline: it is discussed in
section~\ref{sec:h}. The proofs of the main results are then given in
section~\ref{sec:proofs}. The final section contains some simple
examples, to illustrate the use of pseudo-diagrams, as well as a more
complicated example: the $(4,5)$-torus knot.

\section{Unlinks}
\label{subsec:unlinks}

Let $K \subset \R^{3}$ be an oriented link that is isotopic to the
standard $p$-component unlink $U_{p}$. According to
\cite[Proposition 8.11]{KM-unknot}, we have an isomorphism
\begin{equation} \label{eq:unknot-V-tensor-p}
\Isharp(K) \stackrel{\gamma}{\to} V^{\otimes p} 
\end{equation}
that depends only on the given orientation 
and a choice of ordering of the
components of $K$. Here
\[
        V = \langle \vp, \vm \rangle
\]
is a free abelian group on two generators. This isomorphism arises in
\cite{KM-unknot} as a consequence of an excision property which is
used to establish a K\"unneth product formula for $\Isharp$ of a split
link $K_{1}\amalg K_{2}$. 

On the other hand, there is a more direct
way to compute $\Isharp(K)$ for an unlink $K$, working from the
definition of instanton homology. We recall the definition
in outline. From $K$, we form a new
link $K^{\sharp}\subset S^{3}$, the union of $K$ and a standard Hopf link
near infinity, and we equip $S^{3}$ with an orbifold metric with
cone-angle $\pi$ along $K^{\sharp}$. We let $\omega$ denote an arc joining the two
components of the Hopf link, and we form the configuration space
$\conf^{\omega}(S^{3},K^{\sharp})$ consisting of singular $\SO(3)$ connections
on the complement of $K^{\sharp}$, with $w_{2}$ equal to the dual of
$\omega$ and with holonomy asymptotically conjugate to the element
\[
                    \bi =
                    \begin{pmatrix}
                        i & 0 \\ 0 & -i 
                    \end{pmatrix}
\]
on small circles linking $K^{\sharp}$. We form $\bonf^{\omega}(S^{3},
K^{\sharp})$ as the quotient of $\conf^{\omega}(S^{3},K^{\sharp})$ by
the determinant-1 gauge group. In
$\bonf^{\omega}(S^{3},K^{\sharp})$ we consider the critical points of the perturbed
Chern-Simons functional $\CS + f$, where $f$ is a holonomy
perturbation chosen to achieve a
Morse-Smale transversality condition for the formal gradient flow.
For the \emph{unperturbed} Chern-Simons functional, the set of
critical points is the space of flat connections in
$\bonf^{\omega}(S^{3}, K^{\sharp})$, and these can be identified with
the representation variety
\[
           \Rep(K , \bi)
\]
consisting of homomorphisms $\rho: \pi_{1}(\R^{3} \sminus K) \to
\SU(2)$ with $\rho(m) \sim \bi$, for all meridians $m$ of $K$.

In the case of an unlink, the fundamental group of $\R^{3}\sminus K$
is free on $p$ generators: we can specify generators by  giving
explicit choices of meridians, oriented consistently with the given
orientation of $K$. After making these choices, we have an
identification
\[
       \Rep(K,\bi) = (S^{2})^{p},
\]
(where the $2$-sphere is the conjugacy class of $\bi$ in
$\SU(2)$). This representation variety sits in $\bonf^{\omega}(S^{3},
K^{\sharp})$ as a Morse-Bott critical set for $\CS$. 
A product of $2$-spheres carries an obvious Morse function
with critical points only in even index. By choosing $f$ above so that
its restriction to $\Rep(K,\bi)$ is equal to such a Morse function, we
can arrange that the critical points of $\CS + f$ consist of exactly
$2^{p}$ points, all in the same index mod $2$. The differential in the
instanton homology is then zero, and $\Isharp(K)$ is the free abelian
group generated by these critical points. Thus we obtain an
isomorphism,
\begin{equation}
    \label{eq:rep-homology}
     \Isharp(K) \stackrel{\beta}{\to} H_{*}(S^{2})^{\otimes p}
\end{equation}
as the composite
\begin{equation*}
    \begin{aligned}
        \Isharp(K) &= H_{*}(\Rep(K,\bi)) \\
        &= H_{*}((S^{2})^{p}) \\
        &= H_{*}(S^{2})^{\otimes p}.
    \end{aligned}
\end{equation*}

We now have two different ways to identify $\Isharp(K)$ with the
tensor product of $p$ copies of a free abelian group of rank $2$,
through the isomorphisms  $\gamma$ and $\beta$ of equations 
\eqref{eq:unknot-V-tensor-p} and
\eqref{eq:rep-homology} respectively. Combining the first isomorphism with inverse
of the second, we have a map
\begin{equation}\label{eq:Morse-to-V}
         \epsilon = \gamma\comp\beta^{-1}: 
       H_{*}(S^{2})^{\otimes p} \to V^{\otimes p}.
\end{equation}
Using the $\Z/4$ grading on instanton homology (for example) it is easy to see that, in the
case $p=1$, this map is the isomorphism
\[
H_{*}(S^{2}) \to V
\]
that sends the $2$-dimensional generator to $\vp$ and the
$0$-dimensional generator to $\vm$ (given appropriate conventions
about orientations, to fix the signs). For larger $p$, it does not
follow that this map is simply the $p$'th tensor power of
the isomorphism from the $p=1$ case. (The map potentially involves
instantons on the cobordisms that are used in the proof of the
excision property.) Indeed, $\epsilon$ will in general depend on the
choice of metric and perturbation.

What we \emph{can} say about $\epsilon$ is this.
    Make $V$ a graded abelian group by putting $\vp$ and $\vm$ in
    degrees $1$ and $-1$ respectively, and give $V^{\otimes p}$ the
    tensor-product grading. Similarly, grade $H_{*}(S^{2})$ so that
    the 2-dimensional generator is in degree $1$ and the
    $0$-dimensional generator is in degree $-1$ and grade
    $H_{*}(S^{2})^{\otimes p}$ accordingly. We refer to these
    gradings on both sides as the ``$Q$-grading''. In the case of
    $H_{*}(S^{2})^{\otimes p}$, this grading is the ordinary
    homological grading on the manifold $\Rep(K,\bi)$, shifted down by
    $p$. The isomorphism
    $\epsilon$ in
    \eqref{eq:Morse-to-V} preserves the $Q$-grading modulo $4$
    (essentially because the instanton homology is $\Z/4$
    graded). Furthermore, we can write it as
    \begin{equation}\label{eq:epsilon-terms}
             \epsilon =   \epsilon_{0} + \epsilon_{1}  + \cdots
    \end{equation}
    where $\epsilon_{0}$ preserves the $Q$-grading and $\epsilon_{i}$
    increases the $Q$-grading by $4i$. The term $\epsilon_{0}$ can be
    computed by looking only at flat connections on the excision
    cobordism, and it is not hard to see that $\epsilon_{0}$ is indeed
    the $p$'th tensor power of the standard map. The terms
    $\epsilon_{i}$ for $i$ positive arise from instantons with
    non-zero energy.

    Our conclusion is that the map $\epsilon$ respects the decreasing
    filtration defined by the $Q$-gradings on the two sides, and that
    the induced map on the associated graded objects of these two
    filtrations is standard. 

    \begin{remark}
        In the authors' earlier paper \cite{KM-unknot}, the
        group $V=\langle \vp, \vm \rangle$ appears with a mod-4
        grading in which $\vp$ and $\vm$ have degrees $0$ and $-2$ mod $4$
        respectively. The mod $4$ grading in \cite{KM-unknot} is not
        the same as the grading that we are considering here.
    \end{remark}

    Now let $S$ be a cobordism (not necessarily orientable) 
    from an unlink $K_{1}$ to another
    unlink $K_{0}$. The cobordism $S$ (when equipped with an
    $I$-orientation, to fix the overall sign) induces a map
    \[
               \Isharp(S):  \Isharp(K_{1}) \to \Isharp(K_{0}),
    \]
     or equivalently
    \[
              \Isharp(S) : V^{\otimes b_{0}(K_{1})} \to  V^{\otimes b_{0}(K_{0})}.
    \]
     The $Q$-grading on $V^{\otimes b_{0}(K_{1})}$ and  $V^{\otimes
       b_{0}(K_{0})}$ defines a decreasing filtration on each of them.
     We wish to see what the effect of $\Isharp(S)$ is on this
     $Q$-filtration.

     \begin{lemma}
       For a cobordism   $S$ as above, the induced map $\Isharp(S)$
       has order greater than or equal to
       \begin{equation}\label{eq:Q-order}
                      \chi(S) + S\cdot S  
                         - 4 \Bigl\lfloor \frac{S \cdot S}{8} \Bigr\rfloor
        \end{equation}
       with respect to the filtration defined by $Q$.
     \end{lemma}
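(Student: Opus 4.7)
The plan is to work within the Morse-Bott picture of $\Isharp$ developed earlier in the section, where $\Isharp(K_j) = H_*(\Rep(K_j,\bi)) = H_*((S^2)^{p_j})$ and the cobordism map $\Isharp(S)$ is assembled from moduli spaces $M_\kappa(S;\alpha,\beta)$ of singular instantons on $([0,1]\times\R^3,S)$ connecting points of the Morse-Bott critical sets, labelled by the singular action $\kappa$. I would decompose the cobordism map by energy,
\[
   \Isharp(S) = \sum_{\kappa} \Isharp(S)_\kappa,
\]
and observe that the order of $\Isharp(S)$ with respect to the $Q$-filtration equals the minimum, over the contributing values of $\kappa$, of the $Q$-degree shifts of the individual summands.

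First I would invoke the index formula for moduli spaces of singular instantons on a surface cobordism to compute $\dim M_\kappa$ as a linear function of $\kappa$ with slope $8$, whose constant term is built from $\chi(S)$, $S\cdot S$, and the dimensions of the Morse-Bott critical submanifolds at the two ends. After subtracting the fibre dimensions of the resulting correspondence between $(S^2)^{p_1}$ and $(S^2)^{p_0}$, each summand $\Isharp(S)_\kappa$ would shift the $Q$-grading by an affine expression of the form $\chi(S) + S\cdot S + 8(\kappa - \kappa^*)$, with $\kappa^*$ a topological baseline extracted from the index computation.

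Next I would identify the set of allowed values of $\kappa$. The singular $SO(3)$-instanton numbers on the cobordism form a discrete coset of $\Z$ inside $\R$, and the non-negativity $\kappa\geq 0$ restricts $\kappa$ to a set of the form $\kappa_0 + \Z_{\ge 0}$; the fractional offset $\kappa_0\in[0,1)$ is read off from $S\cdot S$ via the Chern--Weil integral of the curvature of a singular reference connection along $S$. A direct computation would then identify
\[
   8(\kappa_0 - \kappa^*) = -4\lfloor S\cdot S/8 \rfloor,
\]
and substituting this into the affine formula for the $Q$-degree shift produces the claimed bound $\chi(S) + S\cdot S - 4\lfloor S\cdot S/8\rfloor$.

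The hard part will be the detailed index-theoretic book-keeping in the previous two paragraphs, particularly in the non-orientable case: one must carefully track the contributions of the normal bundle of $S$ to both $\dim M_\kappa$ and to the action $\kappa$, verify the claimed affine relation between them, and pin down both $\kappa^*$ and $\kappa_0$ exactly. Once the conventions are fixed the final minimisation is elementary, but arranging the Chern--Weil offsets and index corrections so that they combine correctly into the floor term $-4\lfloor S\cdot S/8\rfloor$ is where the substance of the argument lies.
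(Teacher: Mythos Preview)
Your overall strategy is the same as the paper's: compute the dimension of $M_\kappa(S;\beta_1,\beta_0)$ as an affine function of $\kappa$, set it to zero to express the $Q$-shift as $8\kappa + \chi(S) + \tfrac{1}{2}(S\cdot S)$, and then minimise over the allowed non-negative values of $\kappa$. The paper verifies the dimension formula by checking one convenient pair $(\beta_1,\beta_0)$ (top and bottom Morse index) and capping off to a closed surface $\bar S$ in $\R^4$, rather than invoking a general index formula, but this is only a difference of presentation.

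There is, however, a concrete error in your description of the allowed action values. You assert that the instanton numbers form a coset of $\Z$ and write $\kappa\in\kappa_0+\Z_{\ge 0}$ with $\kappa_0\in[0,1)$. In fact, as the paper records (citing Proposition~2.7 of \cite{KM-unknot}), one has
\[
   \kappa \equiv \tfrac{1}{16}(S\cdot S) \pmod{\tfrac{1}{2}\Z},
\]
so the spacing is $\tfrac{1}{2}$, not $1$. This matters: with the correct spacing the minimal non-negative $8\kappa$ equals $\tfrac{1}{2}(S\cdot S) - 4\lfloor S\cdot S/8\rfloor$, which produces the claimed bound, whereas with spacing $1$ you would instead obtain $-8\lfloor S\cdot S/16\rfloor$ and the formula would be wrong. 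Your ``direct computation'' line asserting $8(\kappa_0-\kappa^*)=-4\lfloor S\cdot S/8\rfloor$ is therefore inconsistent with the coset you wrote down; once you replace $\Z$ by $\tfrac{1}{2}\Z$ (and $\kappa_0\in[0,\tfrac{1}{2})$) the arithmetic closes correctly and the rest of your outline goes through.
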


     \begin{proof}
         Choose small perturbations $f_{1}$ and $f_{0}$ for the
         Chern-Simons functional on the two ends to achieve the
         Morse-Smale condition; choose them so that all the critical
         points have even index, as in the discussion above.
         Let $\beta_{1}$ and $\beta_{0}$ be critical points for
         $K_{1}^{\sharp}$ and $K_{0}^{\sharp}$ and let
         $M(S;\beta_{1},\beta_{0})$ be the corresponding moduli
         space. The map $\Isharp$ is defined by
         counting points in zero-dimensional moduli spaces of this
         sort; but we consider first the dimension formula in
         general. For each $[A] \in M(S;\beta_{1},\beta_{0})$ we can
         find a nearby configuration $[A']$ which is asymptotic to
         points $\beta'_{1}$ and $\beta'_{0}$ in the critical set of
         the \emph{unperturbed} functional. Let us write
         $\kappa=\kappa(A)$ for the \emph{topological action} of the
         solution $A$, by which we mean the integral
          \[
              \frac{1}{8\pi^{2}} \int \tr ( F_{A'} \wedge F_{A'}).
           \]
          This quantity is a homotopy invariant of $A$, independent of
          the choice of nearby path $A'$. We write
          $M(S;\beta_{1},\beta_{0})$ as a union of parts
          $M_{\kappa}(S;\beta_{1},\beta_{0})$ of different actions $\kappa$.
           
          We  claim that the dimension of
          $M_{\kappa}(S;\beta_{1},\beta_{0})$ is given by the formula
          \begin{equation}\label{eq:dim-simple}
                   \dim M_{\kappa}(S;\beta_{1},\beta_{0})
                            = 8\kappa + \chi(S) + \frac{1}{2} (S\cdot
                            S) + Q(\beta_{1}) - Q(\beta_{0}).
           \end{equation} 
         To verify this, note first that if we change $\beta_{1}$ to a
         different $\beta'_{1}$ while
         keeping $\kappa$ and $\beta_{0}$ unchanged, then the change
         in $\dim M$ is equal to the change in the Morse index of the
         critical points in the representation variety, which is
         $Q(\beta_{1}) - Q(\beta'_{1})$. A similar remark applies to
         $\beta_{0}$, with an opposite sign. So it is enough to check
         the formula for one particular choice of $\beta_{1}$ and $\beta_{0}$.
         So we take $\beta_{1}$ to be the critical point
         of top Morse index, corresponding to the generator
         $\vp\otimes\dots\otimes\vp$, and $\beta_{0}$ to be the critical
         point of lowest Morse index, corresponding to the generator
         $\vm\otimes\dots\otimes\vm$. So $Q(\beta_{1}) = b_{0}(K_{1})$ and
         $Q(\beta_{0}) = - b_{0}(K_{0})$. In this particular case, the
         dimension of $M_{\kappa}(S;\beta_{1},\beta_{0})$ is equal to
         the dimension of 
         \[
                          M_{\kappa}(\bar S; u_{0}, u_{0})
          \]
          where $\bar S$ is the closed surface obtained from $S$ by
          adding disks to all boundary components, regarded as a
          cobordism from the empty link $U_{0}$ to itself, and $u_{0}$
          is the unique critical point in $\bonf^{\omega}(S^{3},
          U_{0}^{\sharp})$ (the generator of $\Isharp(U_{0})=\Z$). The
          dimension in this case can be read off from the dimension
          formula for the case of a closed manifold,
          \cite[Lemma~2.11]{KM-unknot}, which gives
           \begin{equation}\label{eq:M-dim-bar}
                \dim  M_{\kappa}(\bar S; u_{0}, u_{0}) = 8\kappa +
                \chi(\bar S) + \frac{1}{2}(\bar S \cdot \bar S).
            \end{equation}
           Taking account of the added disks, we can write this as
           \begin{equation}\label{eq:M-dim}
            8\kappa +
                \chi( S) + \frac{1}{2}(S \cdot S) + b_{0}(K_{1}) + b_{0}(K_{0}),
           \end{equation}
           which coincides with the formula \eqref{eq:dim-simple} in
           this case. This verifies the formula \eqref{eq:dim-simple}
           for the general case.

          To continue with the proof of the lemma, we make two
          observations about the action $\kappa(A)$:
          \begin{enumerate}
          \item $\kappa(A) = \tfrac{1}{16}(S\cdot S)$ modulo $\tfrac{1}{2}\Z$; and
          \item $\kappa(A)$ is non-negative, as long as the
              perturbations are small.
          \end{enumerate}
          The first of these can be read off from the formula in
          Proposition~2.7 of \cite{KM-unknot}, applied to the closed
          surface $\bar S$ in $\R^{4}$, using the fact that
          $p_{1}(P_{\Delta})$ is divisible by $4$ when $P_{\Delta}$ is
          an $\SU(2)$ bundle. The second of these observations follows
          from the non-negativity of the action for solutions of the
          unperturbed equations. Together, these observations tell us
          that
           \[
                 8\kappa \ge  \frac{1}{2}(S\cdot S  )
                         - 4 \Bigl\lfloor \frac{S \cdot S}{8} \Bigr\rfloor.
           \]
           
          The matrix entries of $\Isharp(S)$ arise from moduli spaces
          of dimension zero; and for such moduli spaces we have
           \[
          \begin{aligned}
              Q(\beta_{0}) - Q(\beta_{1}) &= 8\kappa + \chi(S)
              + \frac{1}{2}(S\cdot S)  \\
              & \ge \chi(S) + S \cdot S - 4 \Bigl\lfloor \frac{S \cdot
                S}{8} \Bigr\rfloor
          \end{aligned}   
         \]
          because of the above inequality for $\kappa$. This last
          quantity is the expression~\eqref{eq:Q-order} in the lemma.
     \end{proof}

      The lemma above is rather artificial (the maps involved are
      often zero in any case), but the method of proof adapts with
      essentially no change to yield a more applicable
      version. We take $K_{1}$, $K_{0}$ and $S$ as above, and we consider a smooth, finite-dimensional family of
      metrics and perturbations on the cobordism, parametrized a by
      manifold $G$. We then have parametrized moduli spaces
      $M(S,\beta_{1},\beta_{0})_{G}$ over the space $G$. Counting
      isolated points in these parametrized moduli spaces gives rise
      to maps
      \[
               m_{G}(S) : V^{\otimes b_{0}(K_{1})} \to  V^{\otimes b_{0}(K_{0})}.
      \]
       Just as in the case above where $G$ is a point, we obtain an
       inequality for $Q$-grading:

       \begin{lemma}
           The map $m_{G}(S)$ has order greater than or equal to
       \[
                        \chi(S) + S\cdot S  
                         - 4 \Bigl\lfloor \frac{S \cdot S}{8}
                         \Bigr\rfloor + \dim G
        \]
       with respect to the decreasing filtration defined by $Q$.
     \end{lemma}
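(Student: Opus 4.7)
The plan is to replay the proof of the preceding (unparametrized) lemma essentially verbatim, with the parametrized moduli space $M_{\kappa}(S;\beta_{1},\beta_{0})_{G}$ in place of $M_{\kappa}(S;\beta_{1},\beta_{0})$. The only substantive change is that the dimension formula \eqref{eq:dim-simple} picks up an additional summand equal to $\dim G$: for a smooth, finite-dimensional family of metrics and perturbations parametrized by a manifold $G$, a transverse solution contributes to $M_{\kappa}(S;\beta_{1},\beta_{0})_{G}$ with expected dimension
\begin{equation*}
\dim M_{\kappa}(S;\beta_{1},\beta_{0})_{G} = 8\kappa + \chi(S) + \tfrac{1}{2}(S\cdot S) + Q(\beta_{1}) - Q(\beta_{0}) + \dim G.
\end{equation*}

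Next I would observe that the two properties of the topological action $\kappa$ used in the previous proof are unaffected by the introduction of the family. The congruence $\kappa \equiv \tfrac{1}{16}(S\cdot S) \pmod{\tfrac{1}{2}\Z}$ follows, as before, from Proposition~2.7 of \cite{KM-unknot}: it is a homotopy-theoretic statement about individual connections and makes no reference to the underlying metric. Similarly, the non-negativity $\kappa \ge 0$ continues to hold on every fiber of the family, provided the perturbations are taken uniformly small across $G$. Combined, these give the same lower bound
\begin{equation*}
8\kappa \ge \tfrac{1}{2}(S\cdot S) - 4\lfloor (S\cdot S)/8 \rfloor
\end{equation*}
that appeared in the proof of the previous lemma.

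The final step is identical to before. The map $m_{G}(S)$ is defined by counting isolated points in the parametrized moduli spaces, i.e.\ by restricting to the zero-dimensional components. Setting the dimension formula above equal to zero and applying the $\kappa$-bound yields
\begin{equation*}
Q(\beta_{0}) - Q(\beta_{1}) = 8\kappa + \chi(S) + \tfrac{1}{2}(S\cdot S) + \dim G \ge \chi(S) + S\cdot S - 4\lfloor (S\cdot S)/8 \rfloor + \dim G,
\end{equation*}
which is exactly the asserted order of $m_{G}(S)$ with respect to the $Q$-filtration.

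I do not expect any real obstacle in this argument; it is essentially a bookkeeping exercise layered on top of the previous lemma. The one point requiring minor care is the justification that the $G$-parameter contributes exactly $\dim G$ to the expected dimension of the moduli space (rather than being absorbed by reducibles or other degeneracies), and that the non-negativity of $\kappa$ holds uniformly over the family; both are standard once the family of perturbations is chosen small enough.
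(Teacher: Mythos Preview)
Your proposal is correct and matches the paper's own proof, which simply states that the argument is unchanged except that the dimension formula acquires an extra $\dim G$ term. You have spelled out in detail exactly what the paper compresses into one sentence.
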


     \begin{proof}
         The proof is unchanged, except that the formula for the
         dimension of the moduli space has an extra term  $\dim G$.
     \end{proof}

     \begin{corollary}
          \label{cor:proto-q-inequality}
         If $S\cdot S < 8$, then $m_{G}(S)$ has order greater than or
         equal to
               \[
                \chi(S) +
                      S\cdot S + \dim G.
        \]
           \qed 
     \end{corollary}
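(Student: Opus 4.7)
The plan is to deduce the corollary directly from the preceding lemma, which gives the order bound
\[
    \chi(S) + S \cdot S - 4 \Bigl\lfloor \tfrac{S \cdot S}{8} \Bigr\rfloor + \dim G.
\]
To match the statement of the corollary, it suffices to show that the correction term $-4\lfloor (S \cdot S)/8 \rfloor$ is non-negative under the hypothesis $S \cdot S < 8$, i.e., that $\lfloor (S \cdot S)/8 \rfloor \le 0$ in this range. No new moduli-space analysis is required; the argument is purely arithmetic on top of the lemma already in hand.

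This last inequality is elementary: if $S \cdot S < 8$ then $(S \cdot S)/8 < 1$, and the floor of any real number strictly less than $1$ is at most $0$. I would split into two sub-cases for clarity. When $0 \le S \cdot S < 8$ (so, using that self-intersection is even, $S \cdot S \in \{0,2,4,6\}$) the floor vanishes and the lemma's bound agrees exactly with the bound asserted in the corollary. When $S \cdot S < 0$ the floor is strictly negative, so $-4\lfloor (S \cdot S)/8 \rfloor$ is a positive multiple of $4$, and the lemma already provides a bound that is strictly stronger than what the corollary claims. In either sub-case the corollary follows, and I would not expect any obstacle here: the entire content is this arithmetic observation about the floor function combined with the preceding lemma.
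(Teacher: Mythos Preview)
Your argument is correct and is exactly the intended one: the paper marks this corollary with \qed, treating it as immediate from the preceding lemma, and the only content is the arithmetic observation that $\lfloor (S\cdot S)/8\rfloor \le 0$ when $S\cdot S < 8$.
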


\section{The cube}
\label{sec:cube}

Let $K$ be a link in $\R^{3}$. Figure~\ref{fig:unoriented-skein} shows
three copies of the standard closed ball $B^{3}$, each containing a
pair of arcs: $L_{0}$, $L_{1}$ and $L_{2}$ respectively. By a
\emph{crossing} of $K$ we will mean an embedding of pairs
\[
             c : (B^{3}, L_{2}) \hookrightarrow (\R^{3}, K)
\]
which is orientation-preserving on $B^{3}$ and satisfies $c(L_{2}) =
c(B^{3}) \cap K$.
\begin{figure}
    \centering
    \includegraphics[height=1.5 in]{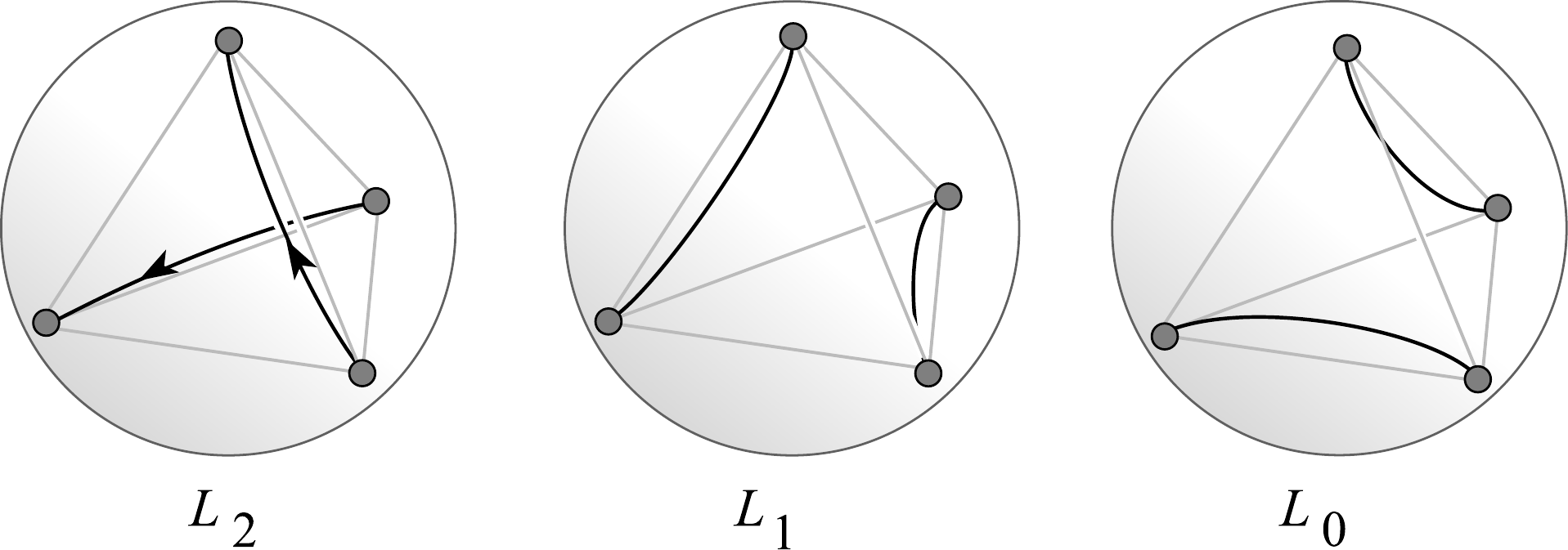}
    \caption{A closed 3-ball, containing a pair of arcs in three different ways.}
    \label{fig:unoriented-skein}
\end{figure}
 The figure also shows a
standard orientation for the pair of arcs $L_{2}\subset B^{3}$. If the
link $K$ is also given an orientation, then we will say that $c$ is a
\emph{positive crossing} if $c : L_{2} \to K$ is either
orientation-preserving or orientation-reversing on both components of
$L_{2}$. Otherwise, if $c$ is orientation-reversing on exactly one
component of $L_{2}$, we say that $c$ is a \emph{negative} crossing.

Let $N$ be a finite set of disjoint crossings
for $K$.  For each $v \in \Z^{N}$,
let $K_{v}\subset \R^{3}$ be the link obtained from $K$ by replacing
$c(L_{2}) \subset K$ by either $c(L_{0})$, $c(L_{1})$ or $c(L_{2})$, 
according to the value of $v(c)$ mod 3, for each crossing
$c \in N$. Thus $K_{v} = K$ in the case that $v : N \to \Z$
is the constant  $2$.

Following the prescription of \cite{KM-unknot}, we choose generic
metrics and holonomy 
perturbations for each link $K_{v}^{\sharp} \subset S^{3}$
so as achieve the Morse-Smale condition. In order to fix signs for the
maps arising later from cobordisms, we also need to choose for each
$v$  a basepoint in $\bonf^{\omega}(S^{3}, K_{v}^{\sharp})$.
We refer to the choice of metric, perturbation and
basepoint as the \emph{auxiliary data} for $K_{v}$. We then have a complex
\[
               (C_{v} , d_{v})
\]
that computes the instanton homology $\Isharp(K_{v})$.

For each $v\ge u$ we have a standard cobordism $S_{vu}$ from $K_{v}$
to $K_{u}$, as in \cite[section 6.1 and 7.2]{KM-unknot}. This
cobordism comes with a family of metrics $G'_{vu}$ defined in
\cite[section 7.2]{KM-unknot}. The dimension of $G'_{vu}$ is $|
v-u|_{1}$ (the sum of the coefficients of $v-u$) and it is acted on by
a 1-dimensional group of translations. The quotient family
$\bG'_{vu}=G'_{vu}/\R$ has dimension $|v-u|_{1}-1$ if $v\ne u$. The
norm $|v-u|_{1}$ is also equal to  $-\chi(S_{vu})$.

\begin{definition}
    We say that a cobordism $S_{vu}$ (or sometimes, a pair $(v,u)$)
    with $v,  u \in \Z^{N}$ is of \emph{type $n$} for $n \ge 0$ if
    $ v\ge u$ and
    \[
           \max \{ \, v(c) - u(c) \mid c\in N \, \} = n.
     \]
    In particular, $(v,u)$ has \emph{type $0$} if and only if $v=u$.
\end{definition}

In the case that $S_{vu}$ has type $1$, $2$ or $3$, the authors
defined in \cite{KM-unknot} a larger family of metrics, $\bG_{vu}$
containing $\bG'_{vu}$. In the case of type $1$, the space $\bG_{vu}$
coincided with $\bG'_{vu}$; for type $2$ and $|N|=1$, the inclusion of $\bG'_{vu}$
in $\bG_{vu}$ was the inclusion of a half-line in  $\R$, and for type $3$ it
was the inclusion of a ``quadrant'' in an open pentagon. In all cases, the dimensions of
$\bG_{vu}$ and $\bG'_{vu}$ are equal.

If we choose an $I$-orientation for each cobordism $S_{vu}$ and an
orientation for the family of metrics $\bG'_{vu}$ (or equivalently the
family $\bG_{vu}$, when defined), then we have oriented, parametrized
moduli spaces of instantons,
\[
          M_{vu}(\beta,\alpha) \to \bG_{vu}
\]
for each pair of critical points $\beta\in \Crit_{v}$ and $\alpha \in
\Crit_{u}$, whenever the pair $(v,u)$ has type $3$ or
less. Consistency conditions are imposed on the chosen orientations:
see for example Lemmas 6.1 and 6.2 in \cite{KM-unknot}. In addition to
the auxiliary data for each, secondary
perturbations on the cobordisms must be chosen, to ensure that the
moduli spaces are regular.
By counting points in zero-dimensional parametrized moduli spaces, we
obtain maps between the corresponding groups $C_{v}$ and $C_{u}$.
Following the notation of \cite{KM-unknot} we write these maps as
\[
        \breve m_{vu} : C_{v} \to C_{u}.
\]
The orientation conventions which are specified in
\cite{KM-unknot} lead to extra signs in the various gluing formulae,
so it is convenient to introduce the following variant: we define
\[
                 f_{vu} : C_{v} \to C_{u}
\]
by the formula
\[
                 f_{vu} = (-1)^{\msign(v,u)} \breve m_{vu}
\]
where
\begin{equation}\label{eq:m-formula}
         \msign(v,u) =  \frac{1}{2}|v-u|_{1}(|v-u|_{1}-1) + \sum_{c\in
         N} v(c).
\end{equation}
(In \cite{KM-unknot} the notation $f_{vu}$ was reserved for the case
of type $0$ or $1$, and the notation $j_{vu}$ or $k_{vu}$ was used
for type $2$ or $3$. For efficiency however, we here adopt $f_{vu}$
for all these cases.) It is also convenient to define $\breve m_{vv} =
d_{v}$ for the case that $v=u$ -- i.e. the case of type $0$ -- so that
\[
                 f_{vv} = (-1)^{\sum v(c)} d_{v}.
\]

Some chain-homotopy formulae involving these maps are proved in
\cite{KM-unknot}. For $(v,u)$ a pair of type $0$, $1$ or $2$, the
formulae all  take the same basic form, given in the following
proposition.

\begin{proposition}\label{prop:generic-fsquared}
    For $(v,u)$ of type $n\le 2$, we have
    \[
    \sum_{\substack{w \\ v\ge w \ge u}} 
    f_{wu} f_{vw} =0 .
     \]
   \qed
\end{proposition}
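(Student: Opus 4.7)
The plan is to prove the identity uniformly for $n = 0, 1, 2$ by counting ends of one-dimensional parametrized moduli spaces. Fix critical points $\beta \in \Crit_v$ and $\alpha \in \Crit_u$ and consider the $1$-dimensional components of $M_{vu}(\beta, \alpha)$ over $\bG_{vu}$; these are the only ones that can contribute to the $(\beta,\alpha)$-matrix entries of any composition $f_{wu} f_{vw}$. The crucial input from \cite{KM-unknot} is that for $n \le 2$ the enlarged family $\bG_{vu}$ was constructed precisely to be a manifold \emph{without} codimension-$1$ boundary: in type $0$ it is a point, in type $1$ it is $\bG'_{vu}$, and in type $2$ the half-line $\bG'_{vu}$ has been extended across its endpoint to all of $\R$ (and similarly for $|N|>1$). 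Hence no ends of the compactification $\overline{M}_{vu}(\beta, \alpha)$ arise from the parameter space itself.

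Every end of $\overline{M}_{vu}(\beta, \alpha)$ is therefore a broken trajectory. The standard compactness-and-gluing package identifies these ends with products
\[
M_{vw}(\beta, \gamma)_{\bG_{vw}} \times M_{wu}(\gamma, \alpha)_{\bG_{wu}}
\]
of zero-dimensional parametrized moduli spaces, indexed by intermediate $w$ with $v \ge w \ge u$ and critical points $\gamma \in \Crit_w$. The extreme cases $w = v$ and $w = u$ give broken trajectories in which one factor is an ordinary unparametrized Floer trajectory on $K_v^\sharp$ or $K_u^\sharp$, contributing the differential $d_v$ or $d_u$. Summing the oriented counts of these ends to zero and applying the orientation rules of \cite[Lemmas~6.1--6.2]{KM-unknot} gives
\[
\sum_{w}\; \varepsilon(v, w, u)\, \breve m_{wu} \circ \breve m_{vw} \;=\; 0
\]
on the $(\beta, \alpha)$-entry, for explicit gluing signs $\varepsilon(v, w, u)$.

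What remains is to verify that the redefinition $f_{vu} = (-1)^{\msign(v, u)} \breve m_{vu}$ absorbs the signs $\varepsilon(v, w, u)$. A short computation using $|v - u|_1 = |v - w|_1 + |w - u|_1$ gives
\[
\msign(v, w) + \msign(w, u) - \msign(v, u) \equiv |v - w|_1 \cdot |w - u|_1 + \sum_{c \in N} w(c) \pmod 2,
\]
and the hard part of the plan is to confirm that this parity matches exactly the Koszul-type sign produced by the orientation conventions of \cite{KM-unknot} when one glues a moduli space over $\bG_{vw}$ to one over $\bG_{wu}$ at a broken trajectory through $\gamma \in \Crit_w$. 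I would handle this case by case for $n = 0, 1, 2$: type $0$ reduces to $d_v^2 = 0$ and type $1$ to the chain-map property of $f_{vu}$, both of which are essentially already in \cite{KM-unknot}, while type $2$ is the genuine new case, where the half-line-to-line extension of $\bG_{vu}$ plays its role by killing what would otherwise be a boundary-of-parameter-space contribution to the end count.
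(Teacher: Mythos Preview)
The paper does not actually prove this proposition: note the \qed\ immediately following the statement, and the preceding sentence, which says these chain-homotopy formulae ``are proved in \cite{KM-unknot}.''  So there is no proof in the present paper against which to compare your attempt.

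Your sketch is the expected argument from that reference --- counting ends of one-dimensional parametrized moduli spaces over $\bG_{vu}$ --- and your sign computation
\[
\msign(v,w)+\msign(w,u)-\msign(v,u)\equiv |v-w|_{1}\,|w-u|_{1}+\sum_{c}w(c)\pmod 2
\]
is correct.  One small clarification of your phrasing: the ends of the compactified moduli space \emph{do} lie over boundary strata of the compactified parameter space $\overline{\bG}_{vu}$ (together with trajectory-breaking on the cylindrical ends, which accounts for the cases $w=v$ and $w=u$).  The relevant feature for $n\le 2$ is not that $\overline{\bG}_{vu}$ has no boundary, but that its boundary consists \emph{only} of the standard broken-metric faces $\overline{\bG}_{vw}\times\overline{\bG}_{wu}$, so that every end is a broken trajectory as you claim.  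It is precisely the appearance of additional boundary faces in the type-$3$ pentagon that produces the extra terms $\bT_{2,-1}$ and $\bN_{2,-1}$ in the identity~\eqref{eq:type-3-identity}.
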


(There is a also a formula in \cite{KM-unknot} for the case of type
$3$, but this involves additional terms: see
\eqref{eq:type-3-identity} in section~\ref{sec:dropping-crossing}.)  In
the case of type $0$, so that $v=u$, the formula in the proposition
says $d_{v}^{2}=0$, expressing the fact that $d_{v}$ is a differential.

We write
\[
         \bC(N) = \bigoplus_{v: N \to \{0,1\}} C_{v}.
\]
This is a sum of the complexes indexed by the vertices of a cube of
dimension $|N|$. We write $\bF = \bF(N)$ for the map
\[
           \bF : \bC(N) \to \bC(N)
\]
given by
\[
              \bF = \bigoplus_{
                     \substack{
                     v,u : N \to \{0,1\} \\
                      v\ge u
                     } }   f_{vu}.
\]
Note that the summands $f_{vu}$ in this definition all have type $0$
or $1$. Proposition~\ref{prop:generic-fsquared} tells us that $\bF^{2}=0$, so
we have a complex. 

We have had to choose auxiliary
data for each $K_{v}$, secondary perturbations for the moduli spaces associated to
the cobordisms $S_{vu}$, as well as consistent $I$-orientations for the cobordisms
and orientations for the families of metrics $\bG'_{vu}$. We refer to this collection
of choices as \emph{auxiliary data for $(K,N)$}.

The following is proved in \cite{KM-unknot}.

\begin{theorem}\label{thm:any-cube}
    For any two collections of crossings, $N$ and $N'$, and any
    corresponding choices of auxiliary data,
    the complexes $(\bC(N), \bF(N))$ and
    $(\bC(N'), \bF(N'))$ are quasi-isomorphic. \qed
\end{theorem}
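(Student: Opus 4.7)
The plan is to argue by induction on $|N|$, reducing the general comparison to the elementary step of dropping or adding a single crossing from the collection. Once this step is established, $\bC(N)$ is quasi-isomorphic to $\bC(\emptyset) = C_{\emptyset}$ (the unresolved instanton complex for $K$) for every $N$, so transitivity yields the theorem for arbitrary $N, N'$.

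For the inductive step, I would fix $c \in N$, write $N = N' \sqcup \{c\}$, and decompose each $v : N \to \{0,1\}$ according to its value at $c$. This gives a splitting $\bC(N) = \bC_{0} \oplus \bC_{1}$ in which each $\bC_{i}$ is canonically isomorphic as a cube complex to $\bC(N')$ built for the link $K^{(i)}$ obtained by performing the $L_{i}$-resolution at $c$. The maps $f_{vu}$ whose pairs $(v,u)$ drop the $c$-coordinate from $1$ to $0$ assemble into a chain map $f_{c} : \bC_{1} \to \bC_{0}$, exhibiting $\bC(N)$ as the mapping cone of $f_{c}$. The comparison map $\bC(N') \to \bC(N)$ is then built from the type-$1$ cobordism maps that change the crossing $c$ from $L_{2}$ to $L_{0}$, propagated through the $N'$-directions of the cube. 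Invoking the unoriented skein exact triangle from \cite{KM-unknot}, which identifies the homology of this mapping cone with $\Isharp(K)$, and combining it with the inductive hypothesis that $\bC_{i}$ computes $\Isharp(K^{(i)})$, a five-lemma argument on the long exact sequence of the cone would show that the comparison map is a quasi-isomorphism.

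Independence from the choice of auxiliary data (metrics, perturbations, basepoints, $I$-orientations, orientations of the families $\bG'_{vu}$, and secondary perturbations on the cobordisms) is established in parallel: given any two choices, one interpolates by a generic one-parameter family, uses the resulting parametrized moduli spaces to produce a chain map between the two versions of $\bC(N)$, and uses a two-parameter family to exhibit the chain-homotopy inverse.

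The main technical obstacle I anticipate is verifying that the assembled type-$1$ cobordism maps form an honest chain map into the mapping cone, rather than merely a collection of maps commuting with differentials up to homotopy. Carrying this out requires the type-$2$ and type-$3$ chain-homotopy identities of Proposition~\ref{prop:generic-fsquared} and its type-$3$ analogue, combined with a careful accounting of the signs in \eqref{eq:m-formula}, so as to cancel the discrepancies that arise when commuting the internal cube differentials of $\bC(N')$ past the $L_{2} \to L_{i}$ cobordism maps.
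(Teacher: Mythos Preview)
Your reduction to dropping one crossing at a time matches the paper, but the execution of that step differs substantially. In the paper (following \cite{KM-unknot}; the argument is recalled in Section~\ref{sec:dropping-crossing}) one identifies $\bC(N')$ with $\bC_{2}\simeq\bC_{-1}$ via the $3$-periodicity and writes down explicit chain maps $\Phi_{2}=\begin{bmatrix}\bF_{21}\\\bF_{20}\end{bmatrix}$ and $\Psi=\begin{bmatrix}\bF_{1,-1}&\bF_{0,-1}\end{bmatrix}$. These are shown directly to be mutually inverse chain-homotopy equivalences: the type-$3$ identity~\eqref{eq:type-3-identity} gives $\Psi\comp\Phi_{2}\simeq\bT_{2,-1}+\bN_{2,-1}$, and an explicit null-homotopy $\bH_{2,-1}$ kills $\bN_{2,-1}$; a similar computation handles $\Phi_{-1}\comp\Psi$. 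No induction, no skein triangle taken as a black box, and no five-lemma are used, and the conclusion is a chain-homotopy equivalence rather than merely a quasi-isomorphism.

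Your sketch has two concrete gaps. First, a chain map $\bC(N')\to\bC(N)=\bC_{1}\oplus\bC_{0}$ cannot be assembled from type-$1$ cobordisms alone: the $\bC_{0}$-component forces the type-$2$ term $\bF_{20}$, and once both components are present the chain-map property follows already from the type-$\le 2$ case of Proposition~\ref{prop:generic-fsquared}; the type-$3$ identity plays no role at that stage, so you have mislocated the difficulty. Second, the five-lemma argument is not set up. The complex $\bC(N')$ carries no intrinsic cone long exact sequence, so to run a five-lemma you must instead compare $\bC(N)$ with the cone of the skein map on the unresolved instanton complexes for $K_{1}$ and $K_{0}$, via the inductive quasi-isomorphisms on $\bC_{1}$ and $\bC_{0}$. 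For that ladder to commute you need the cube edge-map $(\bF_{10})_{*}$ to agree on homology with the skein map under those identifications, a naturality statement of the kind handled around diagram~\eqref{eq:square-commutes}. That compatibility, not the chain-map property of your comparison map, is where the substance lies. With it your route does produce a quasi-isomorphism, but it is more circuitous than the paper's direct type-$3$ argument and delivers a weaker conclusion.
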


Of course, it is sufficient to deal with the case that $N'$ is
obtained from $N$ by forgetting just one crossing; and this is how the
proof is given in \cite{KM-unknot} (see Proposition~6.11 of that paper).
We will later refine this theorem, replacing ``quasi-isomorphic''
with ``chain-homotopy equivalent.'' As a special case we can take $N'$
to be empty, and we obtain:

\begin{corollary}[{\cite[Theorem~6.8]{KM-unknot}}]
 \label{cor:cube-iso}
    For any collection $N$ of crossings of $K$, 
   the homology of the complex $(\bC(N), \bF(N))$ is isomorphic to $\Isharp(K)$.
\end{corollary}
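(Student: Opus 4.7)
The plan is to deduce the corollary immediately from Theorem~\ref{thm:any-cube} by specializing to $N' = \emptyset$. First I would observe that when $N'$ is empty, the set of functions $v : N' \to \{0,1\}$ consists of a single element $\ast$, and the link $K_{\ast}$ obtained by modifying no crossings is just $K$ itself. Consequently $\bC(\emptyset) = C_{\ast}$ as an abelian group, and by the construction reviewed in Section~\ref{subsec:unlinks}, the pair $(C_{\ast}, d_{\ast})$ is the instanton Floer complex whose homology is $\Isharp(K)$.

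Next I would identify the differential $\bF(\emptyset)$. The only summand available in its definition is $f_{\ast\ast}$, a type-$0$ map. By the convention $f_{vv} = (-1)^{\sum_{c} v(c)} d_{v}$ and the fact that the sum over an empty set of crossings is zero, one gets $f_{\ast\ast} = d_{\ast}$. Hence $(\bC(\emptyset), \bF(\emptyset))$ is literally $(C_{\ast}, d_{\ast})$.

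Applying Theorem~\ref{thm:any-cube} to the pair of collections $N$ and $\emptyset$ then produces a quasi-isomorphism between $(\bC(N), \bF(N))$ and $(C_{\ast}, d_{\ast})$, and passing to homology yields the desired isomorphism $H_{\ast}(\bC(N), \bF(N)) \cong \Isharp(K)$. There is no genuine obstacle at this stage: all of the substantive content has been packaged into Theorem~\ref{thm:any-cube}, whose proof in \cite{KM-unknot} proceeds by induction on $|N|$, reducing to the forget-a-single-crossing case (Proposition~6.11 of that paper) and invoking the chain-homotopy identities of Proposition~\ref{prop:generic-fsquared}. The only remaining bookkeeping is the verification of the degenerate case $N' = \emptyset$ carried out above.
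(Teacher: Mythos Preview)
Your proposal is correct and matches the paper's own argument: the corollary is obtained directly from Theorem~\ref{thm:any-cube} by specializing to $N'=\emptyset$, and the paper states exactly this in the sentence preceding the corollary. Your additional bookkeeping (identifying $(\bC(\emptyset),\bF(\emptyset))$ with $(C_{\ast},d_{\ast})$ via the sign convention for $f_{vv}$) is accurate, though the general fact that $(C_{v},d_{v})$ computes $\Isharp(K_{v})$ is stated in Section~\ref{sec:cube} rather than Section~\ref{subsec:unlinks}.
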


\section{The $q$-filtration on cubes}
\label{sec:q-cube}

We continue to consider $K\subset \R^{3}$ with a collection $N$ of
crossings, and the complex $(\bC(N), \bF(N))$ defined in the previous
subsection. We will suppose that the collection $N$ has the following
property.

\begin{definition}\label{def:pseudo-diagram}
    We will say that a link $K$ with a collection $N$ of crossings
    is a \emph{pseudo-diagram} if, 
    for all $v : N\to \{0,1\}$, the link $K_{v} \subset \R^{3}$ is an
    unlink. In this case, we refer to the unlinks $K_{v}$ as the
    \emph{resolutions} of $K$.
\end{definition}

 As in section~\ref{subsec:unlinks}, whenever $K_{v}$ is an unlink, we can choose
the auxiliary data so that the
corresponding differential $d_{v}$ is zero, in which case $C_{v}$ can
be identified with the homology of the representation variety,
$\Rep(K_{v},\bi) = (S^{2})^{p(v)}$ by the isomorphism $\beta$ of
\eqref{eq:rep-homology}. 
When this is done, we say that we
have chosen \emph{good auxiliary data} for $(K,N)$.

The terminology in Definition~\ref{def:pseudo-diagram} is chosen
because the condition holds when $N$ is the set of crossings that
arises from a plane diagram of $K$. But the case of a plane
diagram is special in other ways: for example, the cobordisms
$S_{vu}$, for $v, u: N \to \{ 0,1\}$, are always orientable if $N$
arises from a diagram, whereas Definition~\ref{def:pseudo-diagram}
certainly allows some $S_{vu}$ to be non-orientable. In particular,
the self-intersection numbers $S_{vu} \cdot S_{vu}$ may be
non-zero. For $v\ge u$ we define
\[
         \sig(v,u) = S_{vu} \cdot S_{vu}.
\]
In the case that $w \ge v \ge u$, we have $\sig(w,v) + \sig(v,u) = \sig(w,u)$,
so we can consistently define $\sig(v,u)$ even when we do \emph{not} have
$v\ge u$ by insisting on the additivity property. Thus, for example,
$\sig(v,u) = - \sig(u,v)$. We can extend this notation beyond the cube
$\{0,1\}^{N}$ to all elements $v \in \Z^{N}$ with the property that
$K_{v}$ is an unlink. Thus, if $v$, $u$ both have this property we may
consistently define
\[
          \sig(v,u) = S_{vw} \cdot S_{vw}  - S_{uw} \cdot S_{uw}
\]
where $w$ is any chosen third point with $v\ge w$ and $u \ge w$. 

\begin{lemma}\label{lem:3-step-sig}
    Suppose $v\in \Z^{N}$ is such that $K_{v}$ is an unlink, and
    suppose that $v-u$ is divisible by $3$, so that $K_{u} \cong
    K_{v}$ is also an unlink. Then \[
           \sig(v,u) = \frac{2}{3}\sum_{c}(v(c)-u(c)).
    \]
\end{lemma}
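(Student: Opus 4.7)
The plan is to reduce the claim to a local calculation at a single crossing via additivity of $\sig$, and then to compute the resulting self-intersection.

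First, the additivity identity $\sig(w,v) + \sig(v,u) = \sig(w,u)$ stated just before the lemma extends immediately, by induction, to any chain $v = v_0, v_1, \dots, v_k = u$ of points of $\Z^N$ whose corresponding links are all unlinks, giving $\sig(v,u) = \sum_i \sig(v_i, v_{i+1})$. Since each coordinate of $v-u$ is a multiple of $3$, I would choose a chain in which consecutive terms differ by $\pm 3$ at a single crossing; because altering a single coordinate by $3$ leaves the link unchanged, every $K_{v_i}$ equals $K_v$ and is therefore an unlink, so the chain is valid. The right-hand side of the desired formula is also additive along such a chain, so it suffices to prove the lemma when $v-u$ is supported at one crossing $c$ with $v(c) - u(c) = 3$; there the claim reduces to $\sig(v,u) = 2$.

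In this reduced case, the cobordism $S_{vu}$ is constructed in \cite{KM-unknot} by composing three type-$1$ elementary cobordisms at the single crossing $c$, and outside a $4$-ball $B^4 \subset [0,1]\times\R^3$ containing $c$ it agrees with the product cobordism $[0,1]\times K$. A pushoff that is cylindrical outside $B^4$ and has total linking number zero with $K_v$ and $K_u$ at the two ends therefore localizes the self-intersection inside $B^4$, so $\sig(v,u)$ reduces to a universal local contribution depending only on the standard three-saddle model at a crossing. To evaluate it, I would close up $S_{vu}$ to a surface $\hat S$ inside $B^4$ by attaching the product cylinder on the two local arcs of $K$; with matching framings this operation does not alter the self-intersection, so $\sig(v,u) = \hat S \cdot \hat S$. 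The closed surface $\hat S$ has Euler characteristic $-3$ and is a closed non-orientable surface embedded in the ball, and a direct inspection of the three-saddle local picture shows that its normal Euler number equals $2$.

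The main obstacle is this concluding local computation. Whitney-type constraints only pin down the normal Euler number of a closed non-orientable surface of Euler characteristic $-3$ in $S^4$ up to a congruence mod $4$, so the value $2$ (rather than $-2$ or $\pm 6$) has to be extracted from a direct inspection of the local model. This in turn requires a careful matching between the framing conventions implicit in the elementary cobordisms of \cite{KM-unknot} and the zero-linking pushoff convention used to define $\sig$.
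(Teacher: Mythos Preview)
Your reduction to the case of a single crossing with $v(c_*) - u(c_*) = 3$ via additivity is correct and is exactly what the paper does. The gap is in the local computation, which you yourself flag as the main obstacle. Your closure construction is not well-defined as stated: attaching a product cylinder over the two local arcs does not produce a \emph{closed} surface in $B^4$, since the result still has boundary on $\partial B^4$; and without a precise closed model it is unclear what $\hat S \cdot \hat S$ means or why it should equal $\sig(v,u)$. The appeal to Whitney-type congruences then leaves the value undetermined among $\pm 2, \pm 6, \dots$, so the argument does not conclude.

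The paper avoids this by invoking a structural identification from \cite{KM-unknot}: with $v''$ obtained from $v$ by setting $v''(c_*) = v(c_*) - 2$, the two-step cobordism $S_{vv''}$ is the connect sum of the reverse of $S_{v''u}$ with the standard pair $(S^4, \RP^2)$ in which the $\RP^2$ has self-intersection $+2$. Hence $S_{vu} = S_{vv''} \cup S_{v''u}$ is obtained from the composite of $S_{v''u}$ with its reverse (which has self-intersection $0$) by summing with this $\RP^2$, giving $\sig(v,u) = 2$ directly. This fixes the sign without any direct normal-Euler-number calculation.
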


\begin{proof}
    It is enough to consider only the case that $v$ and $u$ differ at
    only a single crossing $c_{*}$, with $v(c_{*}) - u(c_{*})=3$. In
    this case, the cobordism $S_{vu}$ is a composite of three
    cobordisms, $S_{vv'}$, $S_{v'v''}$ and $S_{v''u}$, with
    $v'(c_{*})=v(c)-1$ and $v''(c_{*}) = v(c)-2$. As explained in
    \cite{KM-unknot}, the cobordism $S_{vv''}$ (the composite of the
    first two) can be
    described as a connected sum of the opposite of $S_{v''u}$ with
    standard pair $(S^{4}, \RP^{2})$, where the $\RP^{2}$ has
    self-intersection $2$ in $S^{4}$. So $S_{vu}$ is obtained from the
    composite of $S_{v''u}$ with its opposite, by  summing with this
    $\RP^{2}$. So $S_{vu}\cdot S_{vu}=2$. Thus $\sig(v,u) =
    (2/3)(v(c_{*}) - u(c_{*}))$ in this case.
\end{proof}

Suppose now that $(K,N)$ is a pseudo-diagram, and let us choose good
auxiliary data.
As in section~\ref{subsec:unlinks} we obtain
an isomorphism
\[
       \beta: \Isharp(K_{v}) \to  V^{\otimes b_{0}(K_{v})}
\]
via the identifications
\begin{equation}\label{eq:Kv-id}
\begin{aligned}
    \Isharp(K_{v}) &= C_{v} \\
    &\cong H_{2}(S^{2})^{\otimes b_{0}(K_{v})} \\
    &= V^{\otimes b_{0}(K_{v})}
\end{aligned}
\end{equation} 
because the differential $d_{v}$ is zero.
As before, we give $V^{\otimes b_{0}(K_{v})}$ a grading $Q$, 
by declaring that the generators $\vp$ and $\vm$ in $V$ have
$Q$-grading $1$ and $-1$.

We define the $q$-grading on $C_{v}$ by shifting the $Q$-grading
by some correction terms. Choose first an orientation
for $K$. At each crossing $c\in N$, one of the resolutions $0$ or $1$
is preferred as the \emph{oriented} resolution. We write $o:
N\to\{0,1\}$ for the function that assigns to each crossing its
oriented resolution. Thus $K_{o}$ can be oriented in such a way that
its orientation agrees with the original orientation of $K$ outside
the crossing-balls. For $v : N \to \{ 0, 1\} $ we then set
\begin{equation}\label{eq:q-def}
                q = Q - \left( \sum_{c\in N} v(c) \right) 
          + \frac{3}{2}\sigma(v,o)  -
                n_{+} + 2 n_{-}
\end{equation}
on $C_{v}$, where $n_{+}$ and $n_{-}$ are the number of positive and
negative crossings respectively, so that
\[
           n_{+} + n_{-} = |N|.
\]
With the exception of the self-intersection term $\sigma(v,o)$ 
(which is zero in the
case arising from a plane diagram), these correction terms are
essentially the same as those presented by Khovanov in
\cite{Khovanov}. The $q$-gradings on all the
vertices of the cube gives us a grading $q$ on $\bC(N)$. Note that the
correction terms in the formula above do not depend on a choice of
orientation for $K$ if $K$ is a knot rather than a link.

We can also define $q$ on $C_{v}$ when $v$ more generally is in
$\Z^{N}$ rather than $\{0,1\}^{N}$ subject to the constraint that
$K_{v}$ is an unlink: we use the same formula. Then we have:

\begin{lemma}\label{lem:q-is-periodic}
    Suppose $v$ is such that $K_{v}$ is an unlink, and let $u$ be such
    that $v-u$ is divisible by
    $3$ in $\Z^{N}$, so that $K_{v} = K_{u}$. Identify $C_{v}$
    with $C_{u}$ as abelian groups, via the isomorphisms $\beta$ (see
    \eqref{eq:Kv-id}). 
   Then
    the $q$-gradings on $C_{v}$ and $C_{u}$ coincide.
\end{lemma}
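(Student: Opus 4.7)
The plan is to unwind the definition of the $q$-grading on both sides and isolate exactly which correction terms depend on $v$ versus $u$. Recall
\[
  q\bigr|_{C_w} = Q - \sum_{c\in N} w(c) + \tfrac{3}{2}\sigma(w,o) - n_{+} + 2n_{-},
\]
for any $w$ such that $K_{w}$ is an unlink, where $Q$ is defined via the identification $\beta:C_{w}\to V^{\otimes b_{0}(K_{w})}$. The constants $-n_{+}+2n_{-}$ depend only on $(K,N)$, so they agree trivially on $C_{v}$ and $C_{u}$.

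Next I would observe that, since $b_{0}(K_{v})=b_{0}(K_{u})$ and we are identifying $C_{v}$ with $C_{u}$ by $\beta_{u}^{-1}\comp\beta_{v}$ (using the \emph{same} ordering of components), corresponding elements have, by construction, the same image in $V^{\otimes b_{0}(K_{v})}$, and hence the same $Q$-grading. Thus the $Q$ contribution to $q$ is unchanged by the identification.

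It remains to verify the equality of the remaining $v$-dependent piece, that is,
\[
  -\sum_{c\in N} v(c) + \tfrac{3}{2}\sigma(v,o) \;=\; -\sum_{c\in N} u(c) + \tfrac{3}{2}\sigma(u,o).
\]
Using the additivity $\sigma(v,o)-\sigma(u,o) = \sigma(v,u)$, this reduces to showing
\[
  \sum_{c\in N}\bigl(v(c)-u(c)\bigr) \;=\; \tfrac{3}{2}\,\sigma(v,u).
\]
Since $v-u$ is divisible by $3$ in $\Z^{N}$ and $K_{v}$ is an unlink, Lemma~\ref{lem:3-step-sig} is directly applicable and gives $\sigma(v,u) = \tfrac{2}{3}\sum_{c}(v(c)-u(c))$, which yields the required identity after multiplying by $\tfrac{3}{2}$.

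There is no real obstacle here; the only thing to check carefully is that Lemma~\ref{lem:3-step-sig} genuinely applies, which it does because its hypotheses coincide with ours, and that the identification via $\beta$ really does preserve the $Q$-grading (not just the underlying abelian group). The content of the lemma is thus the bookkeeping observation that the $v$-dependent correction $-\sum v(c)$ and the self-intersection correction $\tfrac{3}{2}\sigma(v,o)$ were precisely designed so that their sum is invariant under the $3$-fold periodicity in $\Z^{N}$.
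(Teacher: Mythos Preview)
Your proof is correct and follows essentially the same approach as the paper's own proof: both identify $Q$, $n_{+}$, $n_{-}$ as unchanged, then use Lemma~\ref{lem:3-step-sig} to show that the change in $-\sum_{c}v(c)$ is exactly cancelled by the change in $\tfrac{3}{2}\sigma(v,o)$. Your version simply spells out the bookkeeping (in particular the additivity $\sigma(v,o)-\sigma(u,o)=\sigma(v,u)$) more explicitly than the paper does.
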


\begin{proof}
    In the definition of $q$, the terms $Q$, $n_{+}$ and $n_{-}$ are
    unchanged on replacing $v$ by $u$. The remaining terms are 
    $-\sum v(c)$ and
    $(3/2) \sum \sigma(v,o)$, and the changes in
    these terms cancel,  as an
    immediate consequence of Lemma~\ref{lem:3-step-sig}.
\end{proof}

We also note:

\begin{lemma}
    The parity of $q$ on $\bC(N)$ is constant, and is equal to the
    number of components of $K$ mod $2$.
\end{lemma}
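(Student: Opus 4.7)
The plan is to prove constancy of $q \bmod 2$ on $\bC(N)$ in two stages---first on each summand $C_v$, then as one moves across an edge of the cube---and finally to evaluate at a convenient vertex.

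First I would observe that on $V^{\otimes b_{0}(K_{v})}$ the $Q$-grading takes values all of the same parity, namely $b_{0}(K_{v})\bmod 2$, since the generators $\vp$ and $\vm$ have $Q$-degrees $\pm 1$. All the remaining terms in the definition \eqref{eq:q-def} of $q$ depend only on $v$, not on an element of $C_{v}$. Hence $q$ has constant parity on each $C_{v}$, equal to
\[
   b_{0}(K_{v}) + \sum_{c\in N} v(c) + \tfrac{3}{2}\sigma(v,o) + n_{+} \pmod 2.
\]

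Next I would show that this expression is independent of $v\in\{0,1\}^{N}$ by checking invariance under flipping a single coordinate. For an edge $v\to v'$ I need two facts: (i) the number of components $b_{0}(K_{v})$ changes by $\pm 1$, because a single saddle applied to an unlink either merges two components into one or splits one into two; and (ii) the cobordism $S_{v,v'}$ is the composite of a product outside the crossing ball with a standard saddle inside, so it is abstractly orientable, and with the chosen push-off (the Seifert push-off, which has linking number zero at the boundary) its self-intersection $\sigma(v,v')$ vanishes. Combined with the fact that $\sum v(c)$ changes by $\pm 1$, these give $\Delta q \equiv 1+1+0 \equiv 0 \pmod 2$.

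To finish, I would evaluate at the oriented vertex $v=o$, where $\sigma(o,o)=0$ and $\sum o(c) = n_{-}$ (the oriented resolution is $0$ at a positive crossing and $1$ at a negative one). This reduces the statement to $b_{0}(K_{o}) + |N| \equiv b_{0}(K) \pmod 2$. But $K$ is joined to $K_{o}$ by an \emph{oriented} cobordism $S$ built from one oriented saddle at each crossing, with $\chi(S)=-|N|$; any oriented compact surface with boundary satisfies $\chi \equiv b_{0}(\partial) \pmod 2$, so $|N| + b_{0}(K_{o}) \equiv b_{0}(K) \pmod 2$, as required.

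The only step that is not pure bookkeeping is (ii) above: although Definition \ref{def:pseudo-diagram} allows $S_{vu}$ to be non-orientable in general, I must use that at the level of a single edge of the cube the cobordism is genuinely an orientable saddle, so that $\sigma(v,v')=0$. Without this vanishing the coefficient $3/2$ in \eqref{eq:q-def} would shift parities across edges, and the lemma would fail.
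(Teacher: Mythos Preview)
Your step (ii) contains a genuine gap. For a pseudo-diagram in the sense of Definition~\ref{def:pseudo-diagram}, the elementary cobordism $S_{vv'}$ along an edge of the cube need \emph{not} be orientable. It is true that the piece inside the crossing ball is an orientable disk and that the piece outside is a product; but gluing orientable surfaces along several arcs can yield a non-orientable surface. Concretely, the non-trivial connected component of $S_{vv'}$ is either a pair of pants or a twice-punctured $\RP^{2}$ (cf.\ the proof of Lemma~\ref{lem:max-is-2}), and in the latter case $\sigma(v,v')=\pm 2$, not $0$. Your claim (i) fails at the same moment: precisely when $S_{vv'}$ is non-orientable one has $b_{0}(K_{v})=b_{0}(K_{v'})$, so the number of components does \emph{not} change by $\pm 1$. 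The one-crossing pseudo-diagrams for the unlink and the Hopf link in Figure~\ref{fig:pseudo-unlinks} exhibit exactly this phenomenon, with $S_{10}\cdot S_{10}=\pm 2$. As written, your argument is therefore valid only when $N$ arises from a plane diagram.

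Contrary to your final paragraph, the lemma does \emph{not} fail in the non-orientable case; rather, the two effects cancel. If $S_{vv'}$ is non-orientable then $\tfrac{3}{2}\sigma(v,v')$ is odd (the self-intersection of an $\RP^{2}$ in $\R^{4}$ is $2$ mod $4$) while $b_{0}$ is unchanged, and $\sum_{c} v(c)$ still changes by $1$; so again exactly two of the three relevant terms change parity. This orientable/non-orientable case split is exactly how the paper's proof proceeds. Your first and third steps are correct and agree with the paper.
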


\begin{proof}
   At each $v\in\{0,1\}^{N}$, the parity of $Q$ on $C_{v}$ is equal to
   the number of components of $K_{v}$. (This follows immediately from
   the definition.) So it is clear that the parity of $q$ is constant
   on each $C_{v}$. We must check that its parity is independent of
   $v$. For this we consider two adjacent vertices $v$, $v'$ in the cube
   $\{0,1\}^{N}$. The term $\sum v(c)$ which appears in the
   definition of $q$ then changes by $1$ between $v$ and $v'$. The
   change in the term $(3/2)\sigma(v,o)$ is equal to
   $(1/2)\sigma(v,v')$ mod $2$, which is zero if the cobordism
   $S_{vv'}$ is orientable and is equal to $1$ if it is
   non-orientable, because the self-intersection number of an
   $\RP^{2}$ in $\R^{4}$ is equal to $2$ mod $4$. In the orientable case,
   the number of components of $K_{v}$ and $K_{v'}$ differ by $1$, so
   the parity of $Q$ changes by $1$. In the non-orientable case, the
   parity of $Q$ is unchanged. Altogether, exactly two of the first
   three terms in the definition of $q$ change parity. So the parity
   of $q$ is indeed constant.

   Now let $K_{o}$ denote the oriented resolution of our original knot
   $K$. To obtain the oriented resolution, we must set $o(c)=0$ at
   every positive crossing and $o(c)=1$ at every negative crossing. So
   when $v=o$, we have
   \[
              \sum_{v\in N}v(c) = n_{-}.
    \]
   At this vertex of the cube, we therefore have
   \[
   \begin{aligned}
       q &= b_{0}(K_{o}) - n_{-} - n_{+} \\
       &= b_{0}(K_{o}) - |N|
   \end{aligned}
   \]
  mod $2$. The cobordism from $K$ to $K_{o}$ is orientable, and
  is obtained by attaching $|N|$ $1$-handles. As above, each $1$-handle
  addition changes the number of components by $1$. So $b_{0}(K_{o})-|N|
  = b_{0}(K)$ mod $2$. This concludes the proof of the lemma.
\end{proof}

Although we can define the $q$-grading on $\bC(N)$ whenever $(K,N)$ is
a pseudo-diagram, it is not the case (a priori, at least) that the
differential $\bF(N) : \bC(N) \to \bC(N)$ respects the decreasing filtration
defined by $q$. For this, we need an additional condition.

\begin{definition}
    \label{def:small-squares}
We say that $(K,N)$ has \emph{small self-intersection
numbers} 
if $S_{vu} \cdot S_{vu} \le 6$ for all $v\ge u$ in $\{0,1\}^{N}$.
\end{definition}

\begin{proposition}\label{prop:C-is-filtered}
    If $(K,N)$ is a pseudo-diagram with small self-intersection
    numbers, 
   then the differential $\bF(N)
    : \bC(N) \to \bC(N)$ has order $\ge 0$ with respect to 
     the decreasing filtration defined by $q$.
\end{proposition}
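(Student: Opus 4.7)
The plan is to establish the $q$-order bound for each matrix entry $f_{vu}\colon C_v \to C_u$ of $\bF(N)$, where $v \ge u$ in $\{0,1\}^N$. With good auxiliary data, the diagonal maps $f_{vv}=d_v$ vanish, so it suffices to handle the off-diagonal entries with $v>u$. The strategy is to apply Corollary~\ref{cor:proto-q-inequality} to bound the $Q$-order of $f_{vu}$, then convert to a $q$-order bound using the defining formula for $q$.

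First, I identify $f_{vu}$ with the parametrized cobordism map $m_{G}(S_{vu})$ with $G=\bG_{vu}$. Because $(v,u)$ has type~$1$ in the cube, $\dim\bG_{vu}=n-1$ where $n=|v-u|_1$, and $\chi(S_{vu})=-n$. The small self-intersection hypothesis gives $\sigma(v,u)\le 6<8$, so Corollary~\ref{cor:proto-q-inequality} applies and yields a $Q$-order lower bound for $f_{vu}$ of
\[
    \chi(S_{vu}) + \sigma(v,u) + \dim \bG_{vu} \;=\; -n + \sigma(v,u) + (n-1) \;=\; \sigma(v,u) - 1.
\]

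Next, I translate to the $q$-grading. Using additivity of $\sigma$ (so that $\sigma(v,o)-\sigma(u,o)=\sigma(v,u)$), a short calculation from the defining formula for $q$ shows that the $q$-change along $f_{vu}$ equals the $Q$-change plus $n - \tfrac{3}{2}\sigma(v,u)$. Combined with the preceding step,
\[
    q\text{-order of } f_{vu} \;\ge\; (\sigma(v,u)-1) + n - \tfrac{3}{2}\sigma(v,u) \;=\; n - 1 - \tfrac{1}{2}\sigma(v,u).
\]

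The hard part is confirming that this quantity is non-negative, i.e., verifying $\sigma(v,u)\le 2(n-1)$ for all pairs $v>u$ in the cube. For $n\ge 4$, the small self-intersection hypothesis settles it immediately since $\sigma\le 6\le 2(n-1)$. For the small cases $n\le 3$ one needs a structural analysis of the composite cobordism $S_{vu}$: it is built from $n$ elementary saddles, and the non-orientable contribution to the self-intersection (which is even and constrained by the $\RP^{2}\subset S^{4}$ model with $\sigma\equiv 2\pmod{4}$, as used in the parity lemma) accumulates only gradually as saddles are glued in the pseudo-diagram. Controlling this accumulation in the small-$n$ regime is the main obstacle, and combining it with the uniform bound $\sigma\le 6$ from the hypothesis yields the required $q$-order $\ge 0$.
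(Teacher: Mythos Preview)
Your argument matches the paper's up through the conversion to the $q$-grading: both arrive at
\[
   \ord_{q} f_{vu} \;\ge\; n - 1 - \tfrac{1}{2}\sigma(v,u), \qquad n=|v-u|_{1}.
\]
The gap is in the final step. You aim to prove $\sigma(v,u)\le 2(n-1)$, but this inequality is \emph{false} already for $n=1$: when $v$ and $u$ differ at a single crossing, the cobordism $S_{vu}$ can be a twice-punctured $\RP^{2}$, and closing it up gives an $\RP^{2}\subset\R^{4}$ with self-intersection $+2$. So $\sigma(v,u)=2>0=2(n-1)$. The vague ``structural analysis'' you invoke for $n\le 3$ cannot succeed, because the target inequality simply does not hold.

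What the paper does instead is prove the \emph{weaker} bound $\sigma(v,u)\le 2n$ (this is Lemma~\ref{lem:max-is-2}, proved exactly by the $\RP^{2}$ analysis just described). Plugging this in gives only $\ord_{q} f_{vu}\ge -1$. The missing idea is then a parity argument: the $q$-grading takes values of a single parity on all of $\bC(N)$ (this is the content of the lemma immediately preceding the proposition), so a map of order $\ge -1$ automatically has order $\ge 0$. You need both the correct inequality $\sigma\le 2n$ and the parity step to close the argument.
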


\begin{proof}
    The map $\bF = \bF(N)$ is the sum of the maps $f_{vu}$, each of
    which is obtained by counting instantons on a cobordism $S_{vu}$
    over a family of metrics of dimension $-\chi(S_{vu})-1$. Because the
    self-intersection number of $S_{vu}$ is at most $6$, we can apply
    Corollary~\ref{cor:proto-q-inequality} to the map $f_{vu}$. That
    corollary tells us that, with respect to the decreasing filtration
    defined by $Q$ on on $C_{v}$ and $C_{u}$, the map $f_{vu}$ has
    order $\ge S_{vu}\cdot S_{vu} -1$.
    If we instead consider the decreasing filtration $\cF^{j}$ defined
    by $q$ instead of $Q$, then we obtain
    \begin{equation}\label{eq:q-f-order}
         \ord_{q} f_{vu} \ge 
             -\frac{1}{2} ( S_{vu} \cdot S_{vu}) + \sum_{c} ( v(c) -
          u(c) ) - 1.
   \end{equation}
   Now we need:

   \begin{lemma}\label{lem:max-is-2}
      If $(K,N)$ is a pseudo-diagram, we have
    \[
        ( S_{vu} \cdot S_{vu})  \le 2 \sum_{c} ( v(c) -
          u(c) ),
    \] 
     for all pairs $v\ge u$ in $\{0,1\}^{N}$.
   \end{lemma}

   \begin{proof}
    This can immediately be reduced to the case that $v$ and $u$
    differ at only one crossing $c$. In that case, $S_{vu}$ is the
    union of some cylinders (corresponding to components of $K_{v}$
    that do not pass through the crossing) and a single non-trivial
    piece that is either a pair of pants or a twice-punctured
    $\RP^{2}$. The links $K_{v}$ and $K_{u}$ are trivial by
    hypothesis, and the self-intersection number $S_{vu}\cdot S_{vu}$
    is equal to that self-intersection of the closed surface obtained
    from the cobordism by adding disks. Thus the self-intersection
    number is equal to that of either a surface that is either a union
    of spheres only, or a union of spheres with a single $\RP^{2}$. In
    the former case, the self-intersection is zero. In the latter
    case, the self-intersection of an $\RP^{2}$ in $\R^{4}$ is equal
    to either $2$ or $-2$ \cite{Massey}. In all cases, the
    self-intersection number is no larger than $2$.
   \end{proof}

    To continue with the proof of the Proposition, it follows from the
    lemma now that the right-hand side of \eqref{eq:q-f-order} is $\ge
    -1$, and so $f_{vu} (\cF^{j})
    \subset \cF^{j -1}$ for all $j$. However, since the $q$-grading
    takes values of only one parity, it is in fact the case that
    $f_{vu} (\cF^{j}) \subset \cF^{j}$ for all $j$.
\end{proof}

\section{Isotopy and ordering} 
\label{sec:isotopy}

Given a link $K$ with a set of crossings $N$ and choices of auxiliary
data, we have constructed in the previous section a $q$-filtered complex
$\bC(K,N)$ with its differential $\bF(K,N)$. We now begin the task of
investigating to what extent this filtered complex is dependent on the
choices made. For reference, let us introduce the category $\hFD$ whose objects are
filtered, finitely-generated abelian groups with a
differential of order $\ge 0$, and whose morphisms are
chain maps of order $\ge 0$ modulo 
chain-homotopies of order $\ge 0$.
(As elsewhere in this paper we
continue to refer to a differential group as a ``chain complex'' even
when there is no $\Z$-grading. All our differential groups will be at
least $\Z/4$-graded.) Our complex $\bC(K,N)$ is an element of $\hFD$.

We consider what happens when we keep the
crossings the same but change $K$ by an isotopy and change the choice
of auxiliary data. That is, we fix a set of crossings
$N$ for $K$, and
we suppose that
$K'$ is isotopic to $K$ by an isotopy that is constant inside the
union 
\[
        B(N) = \bigcup_{c\in N} c(B^{3}).
\]
 Thus the trace of this isotopy is a
cobordism $T$ from $K$ to $K'$ that is topologically a cylinder in $\R\times\R^{3}$ and is
metrically a cylinder inside $\R\times B(N)$. We choose
auxiliary data for both $(K,N)$ and $(K',N)$, giving rise to chain
complexes
\[
   \bC(K, N), \quad \bC(K', N)
\]
constructed from the cubes of resolutions.

From the cobordism $T$ we obtain cobordisms $T_{vv}$ from $K_{v}$ to
$K'_{v}$ for all $v: N \to \{0,1\}$. For $v\ge u$, we also obtain a
cobordism $T_{vu}$ from $K_{v}$ to $K'_{u}$ equipped with a family of
metrics $H_{vu}$ of dimension $\sum (v(c) - u(c))$: these cobordisms
and metrics are all the same outside $B(N)$, while inside $B(N)$ they
coincide with the metrics $G_{vu}$ with which the cobordisms $S_{vu}$
were earlier equipped. By counting instantons over the cobordisms
$T_{vu}$ equipped with these metrics and generic secondary
perturbations, 
we obtain a chain map of the
cube complexes,
\[
           \MATHBF{T} : \bC(K, N) \to \bC(K', N).
\]
By the usual sorts of arguments, two different choices of metrics and
secondary perturbations on
the interior of the cobordism give rise to chain maps $\MATHBF{T}$
that differ by chain homotopy, and it follows that $\bC(K,N)$ and
$\bC(K',N)$ are chain-homotopy equivalent.

Now we introduce $q$-gradings. For this we suppose that $(K,N)$ is a pseudo-diagram with small
self-intersection numbers, as in
Definition~\ref{def:small-squares}. Of course,  $(K',N)$ then shares
these properties. For $(K,N)$ and $(K',N)$ we ensure that our chosen
auxiliary data is good.
The complexes $\bC(K,N)$ and $\bC(K', N)$  both then have
$q$-filtrations preserved by the differential
(Proposition~\ref{prop:C-is-filtered}). 
Arguing as in the proof of Proposition~\ref{prop:C-is-filtered}, 
we see that the chain map $\MATHBF{T}$ is
filtration-preserving, as are the chain-homotopies between different
chain maps $ \MATHBF{T}$  and $\tilde{\MATHBF{T}}$ arising from
different choices of metrics and secondary perturbations on the
cobordisms. (The dimensions of the families of metrics on
$T_{vu}$ are larger by $1$ than those on $S_{vu}$, but this only helps
us. The proofs are otherwise the same.)  Thus,

\begin{proposition}\label{prop:isotopy-cube}
    Let $(K,N)$ be a pseudo-diagram with small self-intersection
    numbers, and suppose we have an isotopy from $K$ to $K'$ relative
    to $B(N)$. Let $\bC(K,N)$, $\bC(K',N)$ be the $q$-filtered
    complexes obtained from $(K,N)$ and $(K', N)$ via choices of good
    auxiliary data. Then the isotopy gives rise to a
    well-defined isomorphism $\bC(K,N) \to \bC(K',N)$ in the category
    $\hFD$. \qed
\end{proposition}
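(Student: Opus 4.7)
The plan is to carry out exactly the construction sketched in the paragraphs immediately preceding the statement, being careful about three things: defining the chain map, checking that it is filtration-preserving, and showing that the resulting morphism in $\hFD$ does not depend on the auxiliary choices and is invertible. The chain map $\MATHBF{T}$ itself I would build by assembling the matrix entries $t_{vu} : C_v \to C_u$ defined by counting isolated points of the parametrized moduli spaces $M(T_{vu};\beta,\alpha)_{H_{vu}}$, for $v \ge u$ in $\{0,1\}^N$ and for critical points $\beta$, $\alpha$ of the perturbed Chern--Simons functionals attached to $(K,N)$ and $(K',N)$. The chain-map identity $\bF(K',N)\,\MATHBF{T} - \MATHBF{T}\,\bF(K,N)=0$ is then the standard codimension-one count of the ends of the one-dimensional parametrized moduli spaces over $H_{vu}$: the ends at the boundary strata of $H_{vu}$ reproduce compositions of $S$- and $T$-moduli and, because $T$ is cylindrical over $B(N)$, they match those used to establish Proposition~\ref{prop:generic-fsquared}.

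The filtration estimate for $t_{vu}$ is obtained by applying Corollary~\ref{cor:proto-q-inequality} to the cobordism $T_{vu}$ with the family $H_{vu}$. Since $T$ is a topological cylinder, $\chi(T_{vu})=\chi(S_{vu})$ and $T_{vu}\cdot T_{vu}=S_{vu}\cdot S_{vu}\le 6$, so the small-self-intersection hypothesis carries over and the corollary gives a $Q$-order bound. Translating to $q$, exactly as in the proof of Proposition~\ref{prop:C-is-filtered}, and using Lemma~\ref{lem:max-is-2} together with the extra $+1$ coming from $\dim H_{vu} = \dim\bG'_{vu}+1$, produces an estimate that is at least as good as the one established there for $\bF(N)$. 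So $\MATHBF{T}$ has $q$-order $\ge 0$.

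For well-definedness in $\hFD$, I would take two different choices of interior metrics and secondary perturbations on $T$, interpolate between them by a one-parameter family, and count isolated points in the resulting moduli spaces over the product $[0,1]\times H_{vu}$. This yields a chain-homotopy between the two versions of $\MATHBF{T}$, and the same $Q$/$q$ bookkeeping as above, now with $\dim G$ increased by $1$, shows this chain-homotopy is also filtration-preserving.

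Finally, to see that the induced morphism is an isomorphism in $\hFD$, I would run the same construction for the reversed isotopy to obtain a candidate inverse $\MATHBF{T}'$, and then build a filtered chain homotopy between $\MATHBF{T}'\MATHBF{T}$ and the identity (and symmetrically on the other side) from a one-parameter family of cobordisms interpolating between the composite $T\cup T'$ and the trivial cylinder; this is the step where one must be most careful. The main obstacle is checking that this interpolating family can be arranged so that every cobordism appearing retains the small self-intersection property and that the accompanying families of metrics have the right dimension to apply Corollary~\ref{cor:proto-q-inequality} (with the extra interpolation parameter absorbed into $\dim G$). Granted this, the corollary gives the filtered chain homotopy, and the proposition follows.
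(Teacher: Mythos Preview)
Your proposal is correct and follows essentially the same route as the paper. The paper records exactly this argument in the paragraphs preceding the proposition (which carries a \qed\ with no further proof): construct $\MATHBF{T}$ from the parametrized moduli spaces over $H_{vu}$, invoke the ``usual sorts of arguments'' for the chain-homotopy equivalence, and then observe that the proof of Proposition~\ref{prop:C-is-filtered} goes through verbatim for $\MATHBF{T}$ and for the chain homotopies, the only change being that $\dim H_{vu}=\dim \bG'_{vu}+1$, which only strengthens the inequality.

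Your caution in the last paragraph is unnecessary: since the isotopy is relative to $B(N)$, every cobordism in the interpolating family coincides with $S_{vu}$ inside $\R\times B(N)$ and is a product outside, so $T_{vu}\cdot T_{vu}=S_{vu}\cdot S_{vu}$ identically and the small-self-intersection hypothesis is automatic throughout.
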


\begin{remark}
    As usual in Floer theory, a special case of the above proposition
    is the case that $K=K'$ and the isotopy is trivial, in which case
    the objects $\bC(K,N)$ and $\bC(K',N)$ differ only in that the
    auxiliary data may be chosen differently.
\end{remark}

We now have a diagram of maps on homology groups:
   \begin{equation}\label{eq:square-commutes}
              \xymatrix{
                          H(\bC(K,N)) \ar[r]^{\MATHBF{T}_{*}} \ar[d]  &   H(\bC(K',N)) \ar[d] \\
                          \Isharp(K)     \ar[r]^{\Isharp(T)} &   \Isharp(K') 
	   }
   \end{equation}
The maps $\Isharp(T)$ is the isomorphism induced by a cylindrical
cobordism in the usual way, and $\MATHBF{T}_{*}$ is induced by the
chain map $\bT$. The vertical maps are the isomorphisms of
Corollary~\ref{cor:cube-iso}.  It is a straightforward application of
the usual ideas, to show that this diagram commutes. To do this, we
remember that the isomorphisms of Corollary~\ref{cor:cube-iso} are
obtained as a composite of maps, forgetting the crossings of $N$ one
by one. Thus one should only check the commutativity of a digram such
as this one, where $N' = N \setminus\{c_{*}\}$:
   \[
              \xymatrix{
                          H(\bC(K,N)) \ar[r]^{\MATHBF{T}(N)_{*}} \ar[d]  &   H(\bC(K',N)) \ar[d] \\
                            H(\bC(K,N'))   \ar[r]^{\MATHBF{T}(N')_{*}} &    H(\bC(K',N')).
	   }
   \]
A chain homotopy between  the composite chain-maps at the level of these cubes is
constructed by counting instantons on cobordisms $T_{vu}$ with $v$ and
$u$ in the appropriate cubes, to obtain a map
\[
                \bC(K,N) \to \bC(K', N').
\]

There is an additional point concerning the commutativity of the
square \eqref{eq:square-commutes}. 
Corollary~\ref{cor:cube-iso} provides the isomorphisms
which are the vertical arrows in the diagram, but the construction of
these isomorphisms depends on a choice of ordering for the set of
crossings $N$, because the map is defined by ``removing one crossing
at a time''. Despite this apparent dependence, the isomorphism $H_{*}(\bC(N)) \to
   \Isharp(K)$ is actually independent of the ordering, up to overall
   sign. To verify this, it is of course enough to consider the effect of
  changing the order of just two crossings which
   are adjacent in the original ordering of $N$.  The
   chain-homotopy formula that we need in this situation is another
   application of Proposition~\ref{prop:generic-fsquared}. To see
   this, consider for example the case that $c_{1}$ and $c_{2}$ are
   the first two crossings in a given ordering of $N$. 
   The construction  of the isomorphism in Corollary~\ref{cor:cube-iso}
   begins with the composite of two chain-maps
    \[
         \bC(K,N) \stackrel{p}{\to} \bC(K, N\sminus\{c_{1}\})  \stackrel{q}{\to}
       \bC(K,N\sminus\{c_{1}, c_{2}\}).
    \]
   If we switch the order, we have a different composite, going via a
   different middle stage:
    \[
       \bC(K,N)  \stackrel{r}{\to} \bC(K, N\sminus\{c_{2}\})  \stackrel{s}{\to}
       \bC(K,N\sminus\{c_{1}, c_{2}\}).
    \]
    The induced maps in homology are the same up to sign, because $qp$
    is chain-homotopic to $-sr$. The chain homotopy is the map
    $\bC(K,N) \to \bC(K,N\sminus\{c_{1}, c_{2}\})$ obtained as the sum
    of appropriate terms $f_{vu}$.     

    Via the isomorphism of Corollary~\ref{cor:cube-iso}, the group
    $\Isharp(K)$ obtains from $\bC(K,N)$ a decreasing
    $q$-filtration. We can interpret the above results as telling us
    that this filtration is independent of some of the choices
    involved:

\begin{proposition}\label{prop:q-filtration-natural}
    Let $K$ be a link in $\R^{3}$, and let $N$ be a collection of
    crossings such that $(K,N)$ is a pseudo-diagram with small
    self-intersection numbers. Then, via
    the isomorphism with $H(\bC(K,N))$, the instanton homology group
    $\Isharp(K)$ obtains a quantum filtration that does not depend on
    an ordering of $N$. Furthermore, this filtration of $\Isharp(K)$
    is natural for 
    isotopies of $K$ rel $B(N)$. \qed
\end{proposition}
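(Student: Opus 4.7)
The plan is to organize the pieces already assembled in the discussion preceding the statement. The isomorphism $H(\bC(K,N)) \to \Isharp(K)$ provided by Corollary~\ref{cor:cube-iso} is built by iteratively ``forgetting one crossing,'' so I would first check that every elementary forget-one-crossing chain map $\bC(K,N) \to \bC(K, N\sminus\{c\})$ is filtration-preserving. This map is assembled from exactly the same cube cobordisms $S_{vu}$ used to build $\bF(N)$, and the computation of Proposition~\ref{prop:C-is-filtered}---combining Corollary~\ref{cor:proto-q-inequality} with Lemma~\ref{lem:max-is-2} under the small-self-intersection hypothesis---yields order $\ge 0$ with respect to $q$. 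Consequently the $q$-filtration on $\bC(K,N)$ transfers to a well-defined filtration on $\Isharp(K)$ for any fixed ordering.

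For ordering independence, as the author indicates, it suffices to swap two adjacent crossings $c_1, c_2$ in a chosen ordering of $N$. The two alternative composites $qp$ and $sr$ through the two possible intermediate cubes are chain-homotopic, modulo sign, via a map $\bC(K,N) \to \bC(K, N\sminus\{c_1,c_2\})$ assembled from appropriate terms $f_{vu}$, whose defining identity is another instance of Proposition~\ref{prop:generic-fsquared}. I would then verify, by the very same argument as in Proposition~\ref{prop:C-is-filtered}, that this chain homotopy also has order $\ge 0$ with respect to $q$; the inputs (self-intersection bound and dimension of the family of metrics) are unchanged. Thus the ordering swap is realized by a morphism in $\hFD$, and the induced filtration on $\Isharp(K)$ is independent of the ordering.

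For naturality under an isotopy from $K$ to $K'$ rel $B(N)$, I would appeal to Proposition~\ref{prop:isotopy-cube}, which already yields a well-defined isomorphism $\MATHBF{T}: \bC(K,N) \to \bC(K',N)$ in $\hFD$, and then verify commutativity of the square \eqref{eq:square-commutes}. Following the strategy sketched in the excerpt, this reduces inductively to commutativity of squares in which only a single crossing is forgotten. The chain homotopy witnessing commutativity at each step is built by counting instantons on the cobordisms $T_{vu}$ with $v$ in the larger cube and $u$ in the smaller, and Corollary~\ref{cor:proto-q-inequality} once more shows the homotopy to be filtration-preserving---the extra dimension of the $T_{vu}$-family over the $S_{vu}$-family only improves the relevant inequality.

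The main obstacle is not conceptual but organizational: one must check uniformly that every cobordism appearing---in the forgetting maps, the swap-homotopy, and the isotopy-induced chain maps---continues to obey the $S \cdot S \le 6$ bound needed to invoke Corollary~\ref{cor:proto-q-inequality}, and that the orientations chosen for the parametrized moduli spaces are sufficiently compatible for the chain-homotopy identities to hold on the nose rather than only up to sign. Once this bookkeeping is in place, both halves of the proposition follow directly from the preceding results.
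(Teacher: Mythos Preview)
Your proposal has a genuine gap. You attempt to track the $q$-filtration through every intermediate stage of the ``forget one crossing'' process, but this does not make sense: once you drop a crossing $c$ from $N$, the pair $(K, N\setminus\{c\})$ is typically \emph{not} a pseudo-diagram---the partial resolutions that leave $c$ unresolved are generally not unlinks (indeed, at the final stage $(K,\emptyset)$ this would require $K$ itself to be an unlink). So there is no $q$-grading on $\bC(K, N\setminus\{c\})$, and your assertions that the forget-one-crossing map is ``filtration-preserving'' and that the swap-homotopy ``has order $\ge 0$ with respect to $q$'' have no meaning for the targets of those maps. The self-intersection bounds you propose to verify are therefore beside the point here.

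The paper's argument is simpler and sidesteps this issue entirely. The filtration on $\Isharp(K)$ is \emph{defined} as the image, under the abelian-group isomorphism $H(\bC(K,N))\to\Isharp(K)$ of Corollary~\ref{cor:cube-iso}, of the filtration on $H(\bC(K,N))$. For ordering-independence it therefore suffices that this group isomorphism itself is independent of the ordering up to overall sign; that is exactly what the chain-homotopy $qp\simeq -sr$ establishes, with no filtrations on the intermediate cubes required. Likewise, isotopy-naturality needs only that $\MATHBF{T}_*$ is a filtered isomorphism between $H(\bC(K,N))$ and $H(\bC(K',N))$---which is Proposition~\ref{prop:isotopy-cube}---and that the square \eqref{eq:square-commutes} commutes as a diagram of abelian groups. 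The chain-homotopies used to verify that commutativity again pass through unfiltered intermediate cubes, and again that is harmless: all that matters is equality of the composite group isomorphisms. So the obstacle you identify as ``organizational'' is in fact conceptual, and once it is removed the proof is shorter than you anticipate.
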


\section{Dropping a crossing}
\label{sec:dropping-crossing}

We continue to suppose that $(K,N)$ is a pseudo-diagram with small
self-intersection numbers. 
The isomorphism class in $\hFD$ of the filtered complex
obtained from the cube of resolutions is
independent of the choice of good auxiliary data by the previous
proposition, but we must address the question of whether it depends on $N$.

We begin investigating this question by considering 
the situation in which $N' \subset N$ is obtained by
dropping a single crossing. We suppose that good auxiliary data is chosen
for both $(K,N)$ and $(K,N')$ so that we have complexes $\bC(N)$ and
$\bC(N')$. For the moment, we do not consider the $q$-filtrations on
these. Let us recall from \cite{KM-unknot} the proof
that the homologies of the two cubes $\bC(N)$ and $\bC(N')$ 
are isomorphic (the basic case
of Theorem~\ref{thm:any-cube}). Let $c_{*}\in N$ be the distinguished
crossing that does not belong to $N'$. Write
\[
          \bC(N) = \bC_{1} \oplus \bC_{0}
\]
where
\begin{equation}
    \label{eq:bCi-def}
               \bC_{i} = \bigoplus_{\substack{v(c_{*})=i\\
                   v(c')\in\{0,1\}\;\text{if}\; c'\ne c_{*} }} C_{v}.
\end{equation}
The differential $\bF(N)$ on $\bC(N)$ then takes the form
\begin{equation}\label{eq:F-matrix}
            \bF(N) = \begin{bmatrix} \bF_{11} & 0  \\
                                     \bF_{10} & \bF_{00} 
                      \end{bmatrix}.
\end{equation}
We extend the notation $\bC_{i}$ as just defined to all $i$ in $\Z$.
Whenever $i>j$ and $|i-j|=n \le 3$, we have a map
\[
           \bF_{ij} : \bC_{i} \to \bC_{j}
\]
given as the sum of maps $f_{vu}$, where each pair $(v,u)$ has type
$n$. Similarly, we have $\bF_{ii} : \bC_{i} \to \bC_{i}$, which is a
sum of maps indexed by pairs of type $0$ and $1$.

The 3-step periodicity means that the complexes $\bC_{2}$ and
$\bC_{-1}$ are obtained from the same $(|N|-1)$-dimensional cube of
resolutions of $K$. But the complexes $(\bC_{2}, \bF_{2,2})$ and
$(\bC_{-1},\bF_{-1,-1})$ may differ because of the different choices
of auxiliary data. (We are free to arrange that the choices of metrics,
perturbations and base-points so as to respect the periodicity, but not
the choices involved in orienting the moduli spaces.) Nevertheless,
because the links $K_{v}$ that are involved are the same, there are
canonical cylindrical cobordisms which give rise to a chain-homotopy
equivalences
\begin{equation}\label{eq:T-3step}
            \bT_{2,-1}: (\bC_{2}, \bF_{2,2}) \to (\bC_{-1}, \bF_{-1,-1}).
\end{equation}
as in the previous section. Indeed, both of these chain complexes 
are canonically chain-homotopy equivalent to
$(\bC(N'), \pm \bF(N'))$. Thus,
the summand $C_{v'} \subset \bC(N')$ corresponding to $v' : N' \to
\{0,1\}$ is identified with the summand $C_{v} \subset \bC_{2}$, where
$v: N\to\Z$ is obtained by extending $v'$ to $N$ with $v(c_{*})=2$,
and the cylindrical cobordisms give a chain-homotopy equivalence
\[
        (\bC(N'), \bF(N')) \to (\bC_{2}, \bF_{2,2}).
\]

To
show that $\bC(N)$ and $\bC(N')$ are homotopy-equivalent, we therefore
need to provide an  equivalence
\begin{equation}\label{eq:Psi-Phi-1}
\begin{gathered}
   \Psi: \bC_{1} \oplus \bC_{0} \to \bC_{-1}.
\end{gathered}
\end{equation}
This is  done in \cite{KM-unknot}, where it is shown
that such a chain-homotopy equivalence is provided by the map
\begin{equation}\label{eq:Psi-def}
                 \Psi =     \begin{bmatrix} \bF_{1,-1} & \bF_{0,-1} \end{bmatrix}.
\end{equation}
We recall the proof from \cite{KM-unknot} that this map is
invertible. We consider the maps
\begin{equation}\label{eq:Psi-Phi-2}
\begin{gathered}
  \Phi_{2}:   \bC_{2} \to \bC_{1} \oplus \bC_{0}.\\ 
     \Phi_{-1}:   \bC_{-1} \to \bC_{-2} \oplus \bC_{-3}
\end{gathered}
\end{equation}
given by
\begin{equation}\label{eq:Phi-def}
                   \Phi_{2} =  \begin{bmatrix} \bF_{21}  \\ \bF_{20} \end{bmatrix}
\end{equation}
with $\Phi_{-1}$ defined similarly, shifting the indices down by $3$.
We will show that both composites 
 $\Psi\comp\Phi_{2}$ and $\Phi_{-1}\comp\Psi$ are chain-homotopy
 equivalences, from which it will follow that $\Psi$ is a
 chain-homotopy equivalence.

The composite chain-map $\Psi\comp\Phi_{2}$ is the
map
\begin{equation}\label{eq:first-chain-composite}
\begin{aligned}
    ( \bF_{1,-1} \bF_{21} + \bF_{0,-1} \bF_{20}) : \bC_{2} &\to
    \bC_{-1}.
\end{aligned}
\end{equation}
A version of Proposition~\ref{prop:generic-fsquared} for type $3$
cobordisms (essentially equation (43) of \cite{KM-unknot})  gives an identity
\begin{equation}\label{eq:type-3-identity}
     \bF_{2,-1}\bF_{2,2}+\bF_{1,-1}\bF_{2,1}+\bF_{0,-1}\bF_{2,0}+\bF_{-1,-1}\bF_{2,-1} 
= \bT_{2,-1} + \bN_{2,-1}
\end{equation}
for a certain map $\bN_{2,-1}$. Here $\bT_{2,-1}$ is again the chain
map $\bC_{2} \to \bC_{-1}$ arising from the cylindrical cobordisms.\footnote[1]{In
\cite{KM-unknot}, the authors wrote this as $\pm \mathbf{Id}$, assuming that
the perturbations and orientations were chosen so that  $\bC_{2}$ and
$\bC_{-1}$ were the same complex. In fact, it may not be possible to
choose the signs consistently so that this is the case: the best one
can do is arrange that $\bT_{2,-1}$ is diagonal in a standard basis
with diagonal entries $\pm 1$. This point does not affect the
arguments of \cite{KM-unknot}.}
We can interpret the above equation as saying that the map
\eqref{eq:first-chain-composite} is chain-homotopic to ${\bT}_{2,-1} +
\bN_{2,-1}$, and that the chain homotopy is the map
$\bF_{2,-1}$. Since $\bT_{2,-1}$ is an equivalence, it
then remains to show that $\bN_{2,-1}$ is chain-homotopic to zero. In
\cite{KM-unknot} this null-homotopy is exhibited as a map
\begin{equation}\label{eq:chain-H22}
   \bH_{2,-1} : \bC_{2} \to \bC_{-1}
\end{equation} 
whose matrix entries $h_{vu}$ are defined by counting instantons over a $S_{vu}$ for
a family of metrics of dimension \[\sum_{c\ne c_{*}}(v(c) - u(c)). \]

For the other composite  $\Phi_{-1}\comp\Psi$ the story is similar. We
may write the composite as
\[\Phi_{-1}\comp\Psi =
\begin{bmatrix}
    \bF_{-1,-2}\bF_{1,-1} & \bF_{-1,-2} \bF_{0,-1} \\ 
    \bF_{-1,-3}\bF_{1,-1} & \bF_{-1,-3} \bF_{0,-1}
\end{bmatrix}.
\]
We modify this by  the chain-homotopy
\begin{equation}\label{eq:chain-L}
             \MATHBF{L} = \begin{bmatrix}
    \bF_{1,-2} & \bF_{0,-2} \\ 
    \bF_{1,-3} & \bF_{0,-3}
\end{bmatrix}
\end{equation}
and see that  $\Phi_{-1}\comp\Psi$ is chain-homotopic to a map of the form
\[
\begin{bmatrix}
     \bT_{1,-2} + \bN_{1,-2} & 0 \\
     \MATHBF{Y} & \bT_{0,-3} + \bN_{0,-3}
\end{bmatrix} : \bC_{1} \oplus \bC_{0} \to  \bC_{-2} \oplus \bC_{-3}.
\]
Here $\bN_{1,-2}$ and $\bN_{0,-3}$ are similar to $\bN_{2,-1}$
above, and $\MATHBF{Y}$ is an unidentified term involving cobordisms
of type $4$. If we apply a second chain-homotopy of the form
\begin{equation}\label{eq:chain-Hmatrix}
\begin{bmatrix}
     \bH_{1,-2} & 0 \\
      0 & \bH_{0,-3}
\end{bmatrix} : \bC_{1} \oplus \bC_{0} \to  \bC_{-2} \oplus \bC_{-3}
\end{equation}
where $\bH_{1,-2}$ and $\bH_{0,-3}$ are defined in the same way as
$\bH_{2,-1}$ above, then we find that  $\Phi_{-1}\comp\Psi$ is
chain-homotopic to
\[
\begin{bmatrix}
      \bT_{1,-2} & 0 \\
     \MATHBF{X} & \bT_{0,-3} 
\end{bmatrix} : \bC_{1} \oplus \bC_{0} \to  \bC_{-2} \oplus \bC_{-3}.
\]
This lower-triangular map is a chain-homotopy equivalence because the
diagonal entries $\bT_{i,i-3}$ are.

We now introduce the quantum filtrations. For this, we suppose that
both 
$(K,N)$ and $(K,N')$ are pseudo-diagrams with small self-intersection
numbers (Definition~\ref{def:small-squares}). 
We wish to see whether these chain maps and chain homotopies
respect the quantum filtrations. For this, we need to strengthen our
hypotheses once more.

\begin{definition}
    \label{def:butter}
Let $N$ be a set of crossings and let $N'$ be a subset obtained by
forgetting one crossing. We say that the pair $(N, N')$ is \emph{admissible}
if the following holds. First, we require
that both $(K,N)$ and $(K,N')$ are pseudo-diagrams. In addition, we 
require that
whenever $(v,u)$ is a pair of points in $\Z^{N}$ of type at most $3$
with  $v(N'), u(N') \subset \{0,1\}$, we have $S_{vu} \le 6$.
\end{definition}

When $(N,N')$ is admissible in this sense, then both $(K,N)$ and
$(K,N')$ have small self-intersection numbers. The definition 
is set up so that it applies to all the pairs $(v,u)$
corresponding to the maps $f_{vu}$ which are involved in the
chain-maps $\Psi$ and $\Phi_{i}$ defined above, for these chain maps have
matrix entries $\bF_{ij}$ with $i-j=0,1$ or $2$, so they ultimately
rest on cobordisms $S_{vu}$ with $(v,u)$ of type at most
$2$. Furthermore, the chain-homotopies such as \eqref{eq:chain-H22},
\eqref{eq:chain-L} and \eqref{eq:chain-Hmatrix} involve only
cobordisms $S_{vu}$ of type at most $3$, so the condition in the
definition 
covers them also.  The 
$q$-grading, defined using the crossing-set $N$, is defined on all
$C_{v}$ for which $K_{v}$ is an unlink. In particular, $q$ is defined
on each $\bC_{i}$. 

\begin{proposition}\label{prop:ad-cubes}
    If $(N,N')$ is admissible in the sense of
    Definition~\ref{def:butter}, then the chain map
     \[
             \Psi : \bC_{1}\oplus \bC_{0} \to \bC_{-1}
      \]
     has order $\le 0$ with respect to the quantum filtration and is
     an isomorphism in the category $\hFD$.
\end{proposition}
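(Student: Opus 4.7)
The plan is to recycle the unfiltered argument of \cite{KM-unknot} recalled in the preceding discussion, verifying at each stage that the chain maps and chain homotopies that appear are of order $\ge 0$ with respect to the $q$-filtration. There are two things to check: (i) that $\Psi$ itself is filtered, i.e., $\ord_q \Psi \ge 0$, and (ii) that $\Psi$ admits a homotopy inverse in the filtered category $\hFD$.

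To establish (i), I would observe that the matrix entries of $\Psi$ are $f_{vu}$'s with $(v,u)$ of type $1$ or $2$, with $v(c_*)\in\{0,1\}$, $u(c_*)=-1$, and $v|_{N'},u|_{N'}\in\{0,1\}^{N'}$. By admissibility, $S_{vu}\cdot S_{vu}\le 6<8$, so Corollary~\ref{cor:proto-q-inequality} applies. The resulting $Q$-order bound converts to a $q$-order bound exactly as in the proof of Proposition~\ref{prop:C-is-filtered}, producing an inequality of the shape \eqref{eq:q-f-order}. What is needed beyond that is an extension of Lemma~\ref{lem:max-is-2} to pairs with coordinates outside $\{0,1\}$ (forced on us by $u(c_*)=-1$); this extension should follow by the same reduction to a single crossing, with Lemma~\ref{lem:3-step-sig} used to track the self-intersection contributions that arise under the $3$-periodic identification. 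Together with the constant parity of $q$, this gives $\ord_q f_{vu}\ge 0$.

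For (ii), I would reuse the algebraic identities from the unfiltered proof verbatim. The type-$3$ identity \eqref{eq:type-3-identity} together with the null-homotopy $\bH_{2,-1}$ shows that $\Psi\comp\Phi_2$ is chain-homotopic, first via $\bF_{2,-1}$ and then via $\bH_{2,-1}$, to $\bT_{2,-1}$; the symmetric argument using $\MATHBF{L}$ and the diagonal homotopy \eqref{eq:chain-Hmatrix} shows that $\Phi_{-1}\comp\Psi$ is chain-homotopic to the lower-triangular map whose diagonal entries are $\bT_{1,-2}$ and $\bT_{0,-3}$. By Proposition~\ref{prop:isotopy-cube} each of these $\bT$'s is an $\hFD$-isomorphism, since the cylindrical cobordisms realizing the $3$-periodic identification fall within the scope of that proposition. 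Hence $\Psi$ has both a left and a right homotopy inverse in $\hFD$ and is therefore itself an isomorphism there.

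The main obstacle, and the reason the admissibility hypothesis is cast in the form given, is that \emph{every} auxiliary map entering this algebra, namely $\Phi_2$, $\Phi_{-1}$, $\bF_{2,-1}$, $\bH_{2,-1}$, $\MATHBF{L}$, and the diagonal homotopy \eqref{eq:chain-Hmatrix}, must separately be shown to have order $\ge 0$ with respect to $q$. All of them are built from $f_{vu}$'s, or from parametrized moduli counts on cobordisms $S_{vu}$ of type at most $3$ whose indices restrict to $\{0,1\}^{N'}$ on $N'$, so admissibility supplies the bound $S_{vu}\cdot S_{vu}\le 6$ uniformly and Corollary~\ref{cor:proto-q-inequality} applies throughout. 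The $Q$-to-$q$ calculation is identical to that used for (i), and the extra dimension of each metric family for the homotopies is absorbed by the correspondingly larger coordinate-sum $\sum(v(c)-u(c))$.
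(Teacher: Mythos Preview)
Your approach is essentially the paper's: verify that $\Psi$, $\Phi_{2}$, $\Phi_{-1}$ and all the chain homotopies $\bF_{2,-1}$, $\MATHBF{L}$, $\bH_{i,i-3}$ have $q$-order $\ge 0$ by repeating the argument of Proposition~\ref{prop:C-is-filtered}, then invoke Proposition~\ref{prop:isotopy-cube} together with Lemma~\ref{lem:q-is-periodic} to see that the $\bT_{i,i-3}$ are isomorphisms in $\hFD$, so that both $\Psi\comp\Phi_{2}$ and $\Phi_{-1}\comp\Psi$ are isomorphisms there.

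One point deserves more care than your last sentence gives it. For the homotopies $\bH_{i,i-3}$ the metric family has dimension $\sum_{c\ne c_*}(v(c)-u(c))$, which is \emph{smaller} relative to $\sum_c(v(c)-u(c))$ than for the maps $f_{vu}$ (by $2$, not larger), so the bookkeeping is not ``identical'' and the deficit is not absorbed by the coordinate-sum alone. What saves the estimate is that at the distinguished crossing $c_*$ the contribution to $\sigma(v,u)$ is \emph{exactly} $2$ by Lemma~\ref{lem:3-step-sig}, rather than the generic bound $2(v(c_*)-u(c_*))=6$ from Lemma~\ref{lem:max-is-2}; feeding this into the analogue of \eqref{eq:q-f-order} still yields $\ord_q h_{vu}\ge -1$, hence $\ge 0$ by parity. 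The paper's proof is equally terse on this point (``similarly''), so your outline is not missing anything the paper supplies, but your stated reason is not the operative one.
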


\begin{proof}
    We note first that the maps $\bT_{2,-1} : \bC_{2} \to\bC_{-1}$ etc.~given
    by the cylindrical cobordisms are isomorphisms in $\hFD$. This is a
    corollary of
    Proposition~\ref{prop:isotopy-cube} and the 3-periodicity of the $q$-grading described in
   Lemma~\ref{lem:q-is-periodic}.
  The question of whether $\Psi$,
    $\Phi_{2}$ and $\Phi_{-1}$ preserve the filtrations is just the question of
    whether the maps $\bF_{ij}$ that appear as components of the maps
    \eqref{eq:Psi-def} and \eqref{eq:Phi-def} respect the filtration
    defined by $q$. The proof is exactly the same as the proof of
    Proposition~\ref{prop:C-is-filtered}. Similarly, the chain
    homotopies  \eqref{eq:chain-H22},
\eqref{eq:chain-L} and \eqref{eq:chain-Hmatrix} preserve the
filtration. So in the category $\hFD$, we have
\[
\begin{aligned}
    \Psi \circ \Phi_{2} &\simeq \bT_{2,-1} \\
    \Phi_{-1} \circ \Psi &\simeq \begin{bmatrix}
        \bT_{1,-2} & 0 \\
        \MATHBF{X} & \bT_{0,-3} \end{bmatrix}
\end{aligned}
\]
and the maps on the right are isomorphisms in $\hFD$.
\end{proof}

Just as the quantum grading $q$ is defined on $\bC(N)$, so we define a
quantum grading $q'$ on $\bC(N')$. 
The complexes $\bC(N')$ and $\bC_{2}$ are chain-homotopy
equivalent, as
noted above, but the gradings $q'$ and $q$ may apparently differ,
because the correction terms involved in the definition depend on the
original choice of a set of crossings. In fact, however, the gradings
do coincide:

\begin{lemma}\label{lem:same-grading-q}
    With the quantum gradings $q$ and $q'$ corresponding to the
    crossing-sets $N$ and $N'$ respectively, the complexes $\bC_{2}$, $\bC_{-1}$
    and $\bC(N')$ are isomorphic in the category $\hFD$. The
    isomorphisms are the maps $\bT$ given by the cylindrical cobordisms.
\end{lemma}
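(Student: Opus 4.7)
The plan is to verify the statement in two steps: first that the cylindrical chain maps $\bT$ are chain-homotopy equivalences of the underlying complexes, and second that they respect the quantum filtrations. The first step is a routine application of the isotopy argument of Proposition~\ref{prop:isotopy-cube}, since at corresponding vertices the links $K_v$ and $K_{v'}$ are literally identical and the cylindrical cobordisms degenerate to identity cylinders. The substantive content of the lemma is the agreement of the three quantum gradings.

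For $\bT_{2,-1} : \bC_{2} \to \bC_{-1}$, both sides use the crossing-set $N$, and the identification sends a vertex $v$ to the vertex $v'$ obtained by replacing $v(c_{*}) = 2$ with $v'(c_{*}) = -1$. Since $v - v'$ is supported at $c_{*}$ alone and is divisible by $3$, Lemma~\ref{lem:q-is-periodic} applies directly and gives $q(v) = q(v')$.

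The key comparison is between $\bC_{2}$ (graded by $q$ via $N$) and $\bC(N')$ (graded by $q'$ via $N'$). For $v \in \bC_{2}$ and $v' = v|_{N'}$ we have $K_{v} = K_{v'}$, so $Q$ is intrinsic, and expanding the two definitions gives
\[
q(v) - q'(v') = -v(c_{*}) + \tfrac{3}{2}\bigl(\sigma(v,o_{N}) - \sigma(v',o_{N'})\bigr) - \bigl[n_{+}(N) - n_{+}(N')\bigr] + 2\bigl[n_{-}(N) - n_{-}(N')\bigr].
\]
To handle the $\sigma$-difference I would introduce an intermediate $w \in \Z^{N}$ with $w(c_{*}) = 2$ and $w|_{N'} = o_{N'}$. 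Then $K_{w} = K_{o_{N'}}$, the cobordism $S_{v,w}$ coincides with $S_{v',o_{N'}}$ as an embedded surface, and additivity of $\sigma$ reduces the difference to $\sigma(w, o_{N})$, a local self-intersection supported at $c_{*}$.

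A case analysis on the sign of $c_{*}$ then closes the argument. When $c_{*}$ is positive (so $o_{N}(c_{*}) = 0$), the cobordism at $c_{*}$ is the two-saddle composite $L_{2} \to L_{1} \to L_{0}$; the first saddle is non-oriented and contributes $\sigma = 2$, while the second contributes $\sigma = 0$ by the three-fold symmetry used in the proof of Lemma~\ref{lem:3-step-sig}, so $\sigma(w, o_{N}) = 2$ and this exactly cancels the $-v(c_{*}) - 1 = -3$ from the other terms. When $c_{*}$ is negative (so $o_{N}(c_{*}) = 1$), the cobordism is the single oriented saddle $L_{2} \to L_{1}$ with $\sigma = 0$, and once more all contributions cancel. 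The comparison of $\bC_{-1}$ with $\bC(N')$ can be done by the same method, or deduced by composing with $\bT_{2,-1}$. I expect the main technical obstacle to be the careful sign bookkeeping for the $\RP^{2}$-contributions to $\sigma$: the formula for $q$ has been calibrated precisely so that these signs conspire to force the cancellation above.
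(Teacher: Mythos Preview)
Your proposal is correct and follows the same route as the paper: reduce via Proposition~\ref{prop:isotopy-cube} to a direct comparison of $q$ and $q'$ on identified summands, then verify $q-q'=0$ by a case analysis on the sign of $c_{*}$. The paper is terser, simply asserting $\sigma(v,o)-\sigma(v',o')=2$ or $0$, whereas your intermediate point $w$ makes this explicit.

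One small imprecision: in the positive case, your decomposition of $\sigma(w,o_{N})$ into individual saddle contributions (``$2$ for the first, $0$ for the second'') is not determined by the local data; which of the two saddles is non-orientable depends on whether the two arcs at $c_{*}$ lie on the same component of $K_{w}$. What the argument from Lemma~\ref{lem:3-step-sig} actually gives is that the \emph{composite} $L_{2}\to L_{0}$ is the opposite of the orientable saddle $L_{0}\to L_{-1}$ summed with an $\RP^{2}$ of self-intersection~$2$, so the total $\sigma(w,o_{N})=2$ --- which is all you need.
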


\begin{proof}
    We have already noted the isomorphism between $\bC_{2}$ and
    $\bC_{-1}$ in the proof of the previous proposition. For $\bC(N')$,
    by an application of Proposition~\ref{prop:isotopy-cube}, we may
    reduce to the case that the auxiliary data for $(K, N')$ is chosen
    so that $\bC(N')$ and $\bC_{2}$ coincide as complexes and the map
    $\bT$ is the identity map. We must then compare the definitions of
    the quantum gradings.  Choose any $v':N'\to \{0,1\}$ and let $v$
    denote the function $v(c)=v'(c)$ for $c\in N'$ and $v(c^*)=2$. Let
    $\epsilon$ denote the sign of $c^*$.  For a generator in $\bC_{2}
    \cong \bC(N')$ the two $Q$-gradings agree so the difference of the
    $q$-grading is
\[
\begin{aligned}
    q-q' &=  - v(c^{*}) +\frac{3}{2}(\sigma(v,o)-\sigma(v',o'))
           - (n_{+} - n'_{+}) + 2 ( n_{-} - n'_{-})
 \\
    &= -2+\frac{3}{2}(\sigma(v,o)-\sigma(v',o'))+\frac{3}{2}(1-\epsilon)-1
 \end{aligned}
\]
 (cf. equation~\eqref{eq:q-def}).  If $c^*$ is a positive crossing then the oriented resolution of $K$
 has $o(c^*)=0$ while if $c^*$ is a negative crossing then the oriented resolution has $o(c^*)=1$.  Thus
 for a positive crossing we have $\sigma(v,o)-\sigma(v',o')=2$ and
 $1-\epsilon=0$ while for a negative crossing
 we have $\sigma(v,o)-\sigma(v',o')=0$ and $1-\epsilon=2$.  In either case above difference is zero.
\end{proof}

Putting together this lemma and the previous proposition, we have:

\begin{proposition}
    If $(N,N')$ is admissible in the sense of
    Definition~\ref{def:butter}, then $\bC(N)$ and $\bC(N')$ are
    isomorphic in $\hFD$. \qed
\end{proposition}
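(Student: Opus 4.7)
The proposition is essentially a corollary of the two results that immediately precede it, and the plan is to compose them. First I would observe that admissibility of $(N,N')$ in the sense of Definition~\ref{def:butter} is precisely what guarantees that both $(K,N)$ and $(K,N')$ are pseudo-diagrams with small self-intersection numbers, so that the $q$-filtered complexes $\bC(N)$ and $\bC(N')$ are both defined as objects of $\hFD$ in the first place.

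Next I would apply Proposition~\ref{prop:ad-cubes} to the distinguished crossing $c_{*} \in N \setminus N'$, which yields the filtered chain-homotopy equivalence
\[
\Psi : \bC(N) = \bC_{1} \oplus \bC_{0} \longrightarrow \bC_{-1}
\]
in the category $\hFD$, with components $\bF_{1,-1}$ and $\bF_{0,-1}$ as in \eqref{eq:Psi-def}. Separately, I would invoke Lemma~\ref{lem:same-grading-q}, which identifies $\bC(N')$ with $\bC_{-1}$ in $\hFD$ via the map $\bT$ induced by the cylindrical cobordisms, using the three-step periodicity of $q$ together with the case-by-case verification in that lemma showing that the correction terms distinguishing $q$ from $q'$ cancel at the distinguished crossing.

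The required isomorphism $\bC(N) \to \bC(N')$ in $\hFD$ is then assembled as the composite of $\Psi$ with a homotopy inverse of $\bT$, which exists because $\bT$ is an isomorphism in $\hFD$. I do not anticipate any genuine obstacle: the admissibility hypothesis was crafted in Definition~\ref{def:butter} exactly so as to cover every cobordism $S_{vu}$ of type at most $3$ appearing in the chain maps and chain homotopies that support Proposition~\ref{prop:ad-cubes}, and Lemma~\ref{lem:same-grading-q} requires no further hypothesis. The proof is therefore a two-line composition argument, and the only thing to be vigilant about is confirming that the same set of auxiliary data (or compatible choices) is being used when the three complexes $\bC(N)$, $\bC_{-1}$, and $\bC(N')$ are simultaneously compared, which is already handled by the naturality afforded by Proposition~\ref{prop:isotopy-cube}.
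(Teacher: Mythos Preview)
Your proposal is correct and matches the paper's approach: the proposition is stated with a \qed\ precisely because it is the composite of Proposition~\ref{prop:ad-cubes} (giving $\bC(N)=\bC_{1}\oplus\bC_{0}\cong\bC_{-1}$ in $\hFD$) and Lemma~\ref{lem:same-grading-q} (giving $\bC_{-1}\cong\bC(N')$ in $\hFD$), exactly as you outline. The only minor imprecision is in your gloss on Lemma~\ref{lem:same-grading-q}: the three-step periodicity is what identifies $\bC_{2}$ with $\bC_{-1}$, while the case-by-case cancellation of correction terms is what identifies $\bC_{2}$ with $\bC(N')$; but the conclusion you draw from the lemma is correct.
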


\section{Dropping two crossings}

Consider again a collection of crossings $N$ for $K\subset \R^{3}$,
and let $N'$ and $N''$ be obtained from $N$ by dropping first one and
then a second crossing:
\[
\begin{aligned}
       N &=  N'' \cup \{c_{1}, c_{2}\} \\  
       N' &=  N'' \cup \{c_{2}\} 
\end{aligned}
\]

\begin{lemma}
    Suppose that all the links $K_{v}$ corresponding to vertices of
    $\bC(N)$, $\bC(N')$ and $\bC(N'')$ are unlinks. Suppose also that
    for all pairs $(v,u)$ in $\{0,1\}^{N}$ the corresponding cobordism
    $S_{vu}$ is orientable. Then the pairs $(N, N')$ and $(N', N'')$
    are both admissible in the sense of Definition~\ref{def:butter}.
\end{lemma}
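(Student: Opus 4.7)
The pseudo-diagram conditions required in the first part of Definition~\ref{def:butter} for both $(N,N')$ and $(N',N'')$ follow immediately from the hypothesis that all the $K_v$ at vertices of the three cubes are unlinks. A useful strengthening is that the hypothesis covers all residue classes modulo $3$ at the ``extra'' coordinates, so in fact $K_v$ is an unlink for every $v\in\Z^N$ obtained from a vertex of one of the three cubes by applying $3$-shifts at any coordinates; this ensures that the intermediate links along any path of elementary resolution changes that we might use remain unlinks.

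For the self-intersection bound required in the second part of the definition, I would argue one pair at a time. Consider the pair $(N,N')$ with $c_1$ the dropped crossing, and let $(v,u)\in\Z^N$ be of type at most $3$ with $v(N'),u(N')\subset\{0,1\}$. The plan is to use additivity $\sigma(v,u)=\sigma(v,w)+\sigma(w,u)$ for an intermediate $w$ with $v\ge w\ge u$, chosen so that $w$ agrees with $v$ at $c_1$ and with $u$ on $N'$. The first piece $\sigma(v,w)$ is supported on the single crossing $c_1$ with change at most $3$: combining Lemma~\ref{lem:max-is-2} (bound $2$ per elementary step, the proof of which extends to elementary steps between arbitrary unlinks) with Lemma~\ref{lem:3-step-sig} (which pins down the three-step contribution as exactly $2$) yields $\sigma(v,w)\le 4$. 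The second piece $\sigma(w,u)$ involves only $N'$-crossings with $c_1$ held at state $v(c_1)$; applying $3$-shifts at $c_1$ (which preserve $\sigma$ because $L_{k+3}=L_k$ locally) I would reduce to a cobordism whose endpoints both lie in $\{0,1\}^N$, at which point the orientability hypothesis gives $\sigma(w,u)=0$. Summing yields $\sigma(v,u)\le 4\le 6$. The argument for $(N',N'')$ is identical with $c_2$ replacing $c_1$ and $N''$ replacing $N'$.

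The main obstacle is the case in which the held state $v(c_1)$ lies in the residue class $2\pmod 3$, since a single $3$-shift then does not place $w$ and $u$ inside $\{0,1\}^N$. To handle this I would exploit the hypothesis that $(K,N')$ is also a pseudo-diagram: with $c_1$ held at state $2$ the relevant cobordism can be reinterpreted as one in the cube $\bC(N')$, whose vertices are unlinks. Applying the Lemma~\ref{lem:max-is-2}-type analysis crossing-by-crossing to this cobordism, together with the orientability hypothesis transported along a path of elementary steps from the $c_1=2$ slice back to the $c_1=0$ slice using $3$-shifts at an auxiliary crossing, controls the local saddle contributions and keeps the total self-intersection within the required bound. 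The same ingredients handle the residual case at $c_2$ for the pair $(N',N'')$.
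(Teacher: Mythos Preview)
Your decomposition has the pieces swapped: with $w(c_1)=v(c_1)$ and $w|_{N'}=u|_{N'}$, the piece $\sigma(v,w)$ is supported on $N'$ (not on $c_1$), and $\sigma(w,u)$ is the single-crossing piece. That is only a labeling slip, but the substantive problem is in the piece where $c_1$ is held fixed.

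When the held value of $c_1$ is congruent to $2$ mod $3$, you cannot use $3$-shifts at $c_1$ to land both endpoints in $\{0,1\}^N$, and your proposed fix --- ``transporting the orientability hypothesis along a path of elementary steps using $3$-shifts at an auxiliary crossing'' --- is not a valid argument: a $3$-shift at some other crossing does nothing to the $c_1$-coordinate, and a $3$-shift at $c_1$ takes you from $2$ to $-1$, not into $\{0,1\}$. Falling back on Lemma~\ref{lem:max-is-2} ``crossing-by-crossing'' in the $c_1=2$ slice only gives a bound of $2$ times the number of $N'$-crossings that change, which can be as large as $2|N'|$ and is useless here. The cobordisms in that slice genuinely need not be orientable, so $\sigma$ is not zero there; what you need is a \emph{fixed} bound independent of how many $N'$-crossings change. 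The same difficulty is worse for $(N',N'')$: your ``identical'' argument would require bounding the $N''$-piece with both $c_1$ and $c_2$ held at values outside $\{0,1\}$, two steps removed from the cube where orientability is assumed.

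The paper supplies exactly the missing idea: rather than bounding each elementary step of the $N'$-piece, fix a reference vertex $o\in\{0,1\}^N$ and bound the function $\sigma(o,\cdot)$ on the relevant slices. Orientability gives $\sigma(o,v)=0$ for $v\in\{0,1\}^N$. Any point in the $c_1=-1$ slice is a \emph{single} elementary step from $\{0,1\}^N$, so $|\sigma(o,\cdot)|\le 2$ there; any point in the $c_1=c_2=-1$ slice is two steps away, so $|\sigma(o,\cdot)|\le 4$. Then $\sigma(v,u)=\sigma(o,u)-\sigma(o,v)$ is bounded by the spread of these values, and combining with the $3$-step contribution from Lemma~\ref{lem:3-step-sig} gives $\le 6$. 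This reference-point trick is what converts a bound that grows with the number of crossings into a uniform one; your argument is missing it.
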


\begin{remark}
    The orientability hypothesis in the lemma is equivalent to asking
    that the cobordism from $\{1\}^{N}$ to $\{0\}^{N}$ is orientable,
    since all the others are contained in this one.
\end{remark}

\begin{proof}[Proof of the lemma]
   Let $o \in \{0,1\}^{N}$ be the oriented resolution (or indeed any
   chosen point in this cube). Consider  $
   \sigma(o,v)$ as a function of $v$.  It is well-defined on all $v$ with $v(N'') \subset
   \{0,1\}$, because the corresponding links $K_{v}$ are unlinks. 
   For $v\in \{0,1\}^{N}$ we have $\sigma(o,v)=0$, because $S_{ov}$ is
   orientable.

    If $v'\in \Z^{N}$ has $v'(N')\subset \{0,1\}$ while $v'(c_{1})=-1$, then
   $\sigma(o,v')=0$ or $2$. To see this, consider because the unique $v\in
   \{0,1\}^{N}$ with $v|_{N'}=v'_{N'}$ $v(c_{1})=0$. By additivity, we
   have $\sigma(o,v') =
   \sigma(v,v')$. On the other hand, $\sigma(v,v') = 0$ or $2$, according as the cobordism $S_{vv'}$
   from the unlink $K_{v}$ to the unlink $K_{v'}$ is orientable or
   not. 

   Similarly, if $v''\in\Z^{N}$ has $v''(N'') \subset
   \{0,1\}$ while $v''(c_{1})=v''(c_{2})=-1$, then $\sigma(o,v'')$ is
   either $0$, $2$ or $4$, as we see by comparing $\sigma(o,v'')$ to
   $\sigma(o,v')$ where $v'$ is an adjacent element with
   $v'(c_{2})=0$.

   With these observations in place, we can verify that $(N,N')$ and
   $(N',N'')$ are both admissible. For example, to show that $(N',
   N'')$ is admissible, consider $v' \ge u'$ in $\Z^{N'}$ of type at
   most $3$. We may regard $v'$ and $u'$ as elements of $\Z^{N}$ by
   extending them with $v'(c_{1}) = u'(c_{1})=-1$. Consider (as an
   illustrative case) the situation in which $v'(c_{2})=2$ and
   $u'(c_{2})=-1$. Let $\tilde v'$ be the element with $\tilde
   v'|_{N'}=v'|_{N'}$ and $\tilde v'(c_{2})=-1$. By
   Lemma~\ref{lem:3-step-sig}, we have $\sigma(v',\tilde v')=2$, while
   the observations in the previous paragraph give $\sigma(\tilde v',
   v')\le 4$. So $\sigma(v', u') \le 6$, as required for
   admissibility. 
\end{proof}

\begin{corollary}\label{cor:plane-diagram-criterion}
    Let $N$ be the set of all crossings for a link $K$ arising from a
    given plane diagram $D(K)$. Let $N'$ and $N''$ be obtained by
    dropping first a crossing $c_{1}$ and then a pair of crossings
    $\{c_{1}, c_{2}\}$. Suppose that $c_{1}$ and $c_{2}$ are adjacent
    crossings along a
    parametrization of some component of $K$, and are either both
    over-crossings or under-crossings along this component. 
    Then the pairs $(N,N')$ and
    $(N',N'')$ are admissible.
\end{corollary}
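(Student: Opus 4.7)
The plan is to verify the two hypotheses of the preceding lemma and then invoke it: I need to show that the resolutions $K_v$ are unlinks for every $v$ in $\{0,1\}^N$, $\{0,1\}^{N'}$, $\{0,1\}^{N''}$, and that every cobordism $S_{vu}$ with $v\ge u$ in $\{0,1\}^N$ is orientable.

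For the full cube $\{0,1\}^N$, the set $N$ consists of all crossings of the given plane diagram, so each $K_v$ sits in the plane of the diagram as a disjoint union of embedded circles, and is therefore an unlink. For $v\ge u$ in this cube, the cobordism $S_{vu}$ is assembled from one planar saddle at each crossing $c$ with $v(c)>u(c)$, giving the standard orientable Khovanov cobordism.

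For the cube $\{0,1\}^{N'}$, each $K_{v'}$ is a diagram with a single remaining crossing at $c_1$. Two distinct components cannot meet in exactly one transverse point in the plane, so the crossing of $c_1$ must be a self-crossing of a single component of $K_{v'}$; thus $K_{v'}$ is an unknot together with (possibly) some disjoint trivial circles, and is again an unlink.

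The key step, and the main obstacle, is to show that $K_{v''}$ is an unlink for every $v''\in\{0,1\}^{N''}$; here the adjacency-and-same-type hypothesis on $c_1, c_2$ is essential. Let $\alpha$ be the arc of $K$ running from $c_1$ to $c_2$ along the component on which they are adjacent. By hypothesis, $\alpha$ over-crosses (or under-crosses) both $c_1$ and $c_2$ and meets no other crossing of the plane diagram $D$. In $K_{v''}$, every crossing of $D$ except $c_1$ and $c_2$ has been replaced by its planar smoothing, and $\alpha$ survives as a sub-arc whose only incidences with the rest of $K_{v''}$ are the two over- (or under-)crossings at $c_1$ and $c_2$. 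Because $\alpha$ lies uniformly on one side of the plane relative to those crossings, I can ambient isotope it, keeping its endpoints fixed and the remainder of $K_{v''}$ fixed, by lifting $\alpha$ clear of the plane and relaying it along a path disjoint from the rest of $K_{v''}$. This eliminates both crossings and produces a crossing-free diagram of $K_{v''}$, exhibiting it as an unlink. The argument would break if one crossing were over and the other under, since then $\alpha$ could not be freed from both crossings simultaneously; the same-type hypothesis is doing essential work here. With all three unlink conditions in hand and orientability of $S_{vu}$ established, the preceding lemma yields the admissibility of both $(N,N')$ and $(N',N'')$.
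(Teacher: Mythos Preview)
Your proof is correct and follows the same strategy as the paper: verify that all resolutions in the three cubes are unlinks and that the $S_{vu}$ for $v,u\in\{0,1\}^{N}$ are orientable, then invoke the preceding lemma. The only difference is cosmetic, in the $N''$ step: the paper phrases the observation as ``the resulting $2$-crossing diagram is non-alternating, hence an unlink,'' whereas you argue directly by isotoping the over-arc $\alpha$ off the rest of the diagram --- these are the same fact viewed from two sides.
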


\begin{proof}
    If we take the $0$- or $1$- resolution at every crossing in
    $D(K)$, we obtain in every case an unlink, and the cobordisms
    between them are orientable. If we resolve all crossings except
    $c_{1}$, then we obtain a diagram of a link with only one
    crossing, so this is again an unlink. If we resolve all crossings
    except $c_{1}$ and $c_{2}$ then we obtain a 2-crossing
    diagram. This additional hypothesis in the statement of the
    Corollary ensures that this 2-crossing diagram is not
    alternating.  This guarantees that the diagram is still an
    unlink. Thus all the conditions of the previous lemma are satisfied.
\end{proof}

\begin{corollary}\label{cor:plane-diagram-iso}
    As in the previous corollary, let $N$ be the set of all crossings
    arising from a plane diagram $D(K)$, let $\{c_{1}, c_{2}\}$ be a
    pair of these crossings that are adjacent and are either
    both over-crossings or both under-crossings. Again let $N'=N
    \setminus \{c_{1}\}$ and $N'' = N \setminus \{c_{1},c_{1}\}$. Then
    there are quantum-filtration-preserving chain maps
    \[
                \bC(N) \stackrel{\Psi}{\longrightarrow} 
                     \bC(N') \stackrel{\Psi'}{\longrightarrow}
                     \bC(N'') 
    \]
     and
    \[
                \bC(N'') \stackrel{\Phi'}{\longrightarrow} 
                     \bC(N') \stackrel{\Phi}{\longrightarrow}
                     \bC(N) ,
    \]
    such that $\Phi$ is the inverse of $\Psi$ and $\Phi'$ is the
    inverse of $\Psi'$  in the category $\hFD$. In particular,
    $\bC(N)$, $\bC(N')$ and $\bC(N'')$ are isomorphic in $\hFD$. \qed
\end{corollary}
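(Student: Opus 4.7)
The plan is to derive the corollary as an immediate consequence of the two preceding results, feeding admissibility into the construction of Section~\ref{sec:dropping-crossing}. First, I would invoke Corollary~\ref{cor:plane-diagram-criterion}: under the stated hypotheses on the pair $\{c_{1},c_{2}\}$ (adjacency along a component and being both over- or both under-crossings), both $(N,N')$ and $(N',N'')$ are admissible pairs in the sense of Definition~\ref{def:butter}. No additional geometric work is needed at this step.

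Next, for the pair $(N,N')$, I would take $\Psi$ to be the map defined in equation~\eqref{eq:Psi-def} using the distinguished crossing $c_{1}$, and $\Phi$ to be the map built from $\Phi_{2}$ of \eqref{eq:Phi-def} composed (via the cylindrical equivalence $\bT$) with the canonical identification of $\bC_{2}$ with $\bC(N')$ supplied by Lemma~\ref{lem:same-grading-q}. Proposition~\ref{prop:ad-cubes} then tells us that $\Psi$ has order $\geq 0$ with respect to the $q$-filtration and is an isomorphism in $\hFD$, and the same argument yields that $\Phi$ is filtration preserving and serves as its two-sided inverse in $\hFD$ (the composites $\Psi\circ\Phi$ and $\Phi\circ\Psi$ being chain-homotopic to $\bT$-type maps through filtration-preserving homotopies). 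For the pair $(N',N'')$, the identical procedure using the distinguished crossing $c_{2}$ produces $\Psi'$ and $\Phi'$ with the analogous properties.

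Finally, to conclude that $\bC(N)$, $\bC(N')$ and $\bC(N'')$ are all isomorphic in $\hFD$, I would simply compose: $\Psi'\circ\Psi$ and $\Phi\circ\Phi'$ are filtration-preserving chain maps between $\bC(N)$ and $\bC(N'')$, and they are mutually inverse in $\hFD$ because composition of isomorphisms in a category (here, $\hFD$) yields an isomorphism. The fact that the morphisms in $\hFD$ are chain-homotopy classes rather than strict chain maps causes no trouble: the compositions are well-defined on classes, and the inverse relations $\Psi\circ\Phi\simeq\mathrm{Id}$, $\Phi\circ\Psi\simeq\mathrm{Id}$ (and similarly with primes) combine formally to give $(\Psi'\Psi)(\Phi\Phi')\simeq\mathrm{Id}$ and $(\Phi\Phi')(\Psi'\Psi)\simeq\mathrm{Id}$ in $\hFD$.

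The only point that requires any care, rather than being pure bookkeeping, is verifying that the maps $\Psi'$, $\Phi'$ constructed from the crossing-set $N'$ are indeed quantum-filtration-preserving with respect to the $q$-grading coming from $N'$, which may differ numerically from the grading induced from $N$. This is precisely what Lemma~\ref{lem:same-grading-q} handles: the cylindrical cobordism identifies $(\bC(N'),q_{N'})$ with $(\bC_{2},q_{N}|_{\bC_{2}})$ as filtered complexes, so the $\Psi'$ and $\Phi'$ constructed for $(N',N'')$ land in the appropriate filtered category and compose correctly with $\Psi$ and $\Phi$. I expect this compatibility between the two $q$-gradings to be the subtlest item in writing out the proof, though it is already supplied by Lemma~\ref{lem:same-grading-q}.
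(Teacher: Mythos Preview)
Your proposal is correct and matches the paper's approach: the corollary is stated with a \qed and no separate proof, because it follows immediately from Corollary~\ref{cor:plane-diagram-criterion} (giving admissibility of both pairs) together with Proposition~\ref{prop:ad-cubes} and Lemma~\ref{lem:same-grading-q} (giving the filtration-preserving maps $\Psi,\Phi,\Psi',\Phi'$ and their inverse relations in $\hFD$), applied once to $(N,N')$ and once to $(N',N'')$. Your discussion of the compatibility of $q$-gradings via Lemma~\ref{lem:same-grading-q} is exactly the point the paper packages into the unnamed proposition immediately preceding this corollary.
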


\section{Reidemeister moves}
\label{sec:reidemeister}

We now specialize to the crossing-sets that arise from a plane
diagram 
of our knot or link $K$. We regard a diagram as being an image of a
generic projection of $K$ into $\R^{2}$, with a labelling of each
crossing as ``under'' or ``over''. 
As we have mentioned, the set of crossings $N$
coming from a diagram $D$ of $K$ fulfills the conditions of
Definition~\ref{def:pseudo-diagram}; that is, $(K,N)$ is a
pseudo-diagram. Furthermore, $(K,N)$ has small self-intersection
numbers in the sense of Definition~\ref{def:small-squares}. Indeed,
all the cobordisms $S_{vu}$ are orientable, and therefore have
$S_{vu}\cdot S_{vu}=0$. After a choice of good auxiliary data, we
therefore arrive at a filtered complex $\bC(K,N)$, an object of our
category $\hFD$.

It follows from Proposition~\ref{prop:isotopy-cube} that the
isomorphism class of $\bC(K,N)$ in $\hFD$ depends at most on the
isotopy class of the diagram $D$: isotopic diagrams, with
different choices of auxiliary data, will yield isomorphic objects.
Our task is to show that the isomorphism class of $\bC(K,N)$ is
independent of the diagram altogether, and depend only on the link
type of $K$. For this, we need to consider Reidemeister moves.

So let $D_{1}$ and $D_{2}$ be two plane diagrams for the same link type,
differing by a single Reidemeister move. That is, we suppose that
$D_{1}$ and $D_{2}$ coincide outside a standard disk in the plane, and
that inside the disk they have the standard form corresponding to a
Reidemeister move of type I, II or III. Let $K_{1}$ and $K_{2}$ be the
(isotopic) oriented links in $\R^{3}$ whose projections are $D_{1}$ and $D_{2}$,
and let $N_{1}$ and $N_{2}$ be their crossing-sets. Associated to
these links and crossing-sets, after choices of good auxiliary data, we have
cubes $\bC(K_{1}, N_{1})$ and $\bC(K_2, N_2)$, two objects of $\hFD$.

\begin{proposition}
    When the diagrams differ by a Reidemeister move as above, 
    the filtered cubes $\bC(K_{1}, N_{1})$ and $\bC(K_{2}, N_{2})$ are
    isomorphic in the category $\hFD$.
\end{proposition}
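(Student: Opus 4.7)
The plan is to handle each of the three Reidemeister move types separately and, in each case, realize the transition from $D_1$ to $D_2$ as a short composition of the three basic operations already established as isomorphisms in $\hFD$: (i) isotopy of the underlying link rel the union of crossing balls $B(N)$, which is Proposition~\ref{prop:isotopy-cube}; (ii) change of good auxiliary data, which is the trivial-isotopy case of the same proposition; and (iii) dropping one or two crossings from an admissible crossing set, established in Section~\ref{sec:dropping-crossing} and in Corollaries~\ref{cor:plane-diagram-criterion}--\ref{cor:plane-diagram-iso}. Throughout, because $(K_i, N_i)$ comes from a plane diagram, every cobordism $S_{vu}$ between resolutions is orientable with vanishing self-intersection; hence the small-self-intersection hypotheses of Definitions~\ref{def:small-squares} and \ref{def:butter} are automatic, and admissibility of a crossing drop reduces to the single geometric requirement that every relevant partial resolution is an unlink.

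For Reidemeister I, the diagram $D_2$ has one extra curl-crossing $c_*$ compared with $D_1$, so $N_2 = N_1 \cup \{c_*\}$. Both the $0$- and the $1$-resolution of a curl produce (after the other crossings of $N_1$ are resolved arbitrarily) a plane diagram with no crossings, hence an unlink; thus every resolution over $\{0,1\}^{N_2}$ is an unlink and $(N_2, N_1)$ is admissible. Proposition~\ref{prop:ad-cubes} then gives $\bC(K_2, N_2) \cong \bC(K_2, N_1)$ in $\hFD$, and since the R1-undo isotopy can be arranged to be constant on $B(N_1)$, Proposition~\ref{prop:isotopy-cube} identifies this with $\bC(K_1, N_1)$.

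For Reidemeister II, the two new crossings $\{c_1, c_2\} = N_2 \setminus N_1$ lie on two strands, one of which passes over the other at both crossings; along that strand the two crossings are both over-crossings and are adjacent. Corollary~\ref{cor:plane-diagram-criterion} thus supplies admissibility for the successive drops of $c_1$ and $c_2$, and Corollary~\ref{cor:plane-diagram-iso} yields $\bC(K_2, N_2) \cong \bC(K_2, N_1)$ in $\hFD$. Applying Proposition~\ref{prop:isotopy-cube} to the R2-undo isotopy then completes the identification with $\bC(K_1, N_1)$.

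For Reidemeister III, I would decompose the move into a sequence of R2 moves and planar isotopies: introduce an auxiliary pair of crossings via an R2 on one strand, use a planar isotopy which is the identity on all other crossing balls to slide a second strand past that pair (this is where the three crossings on one side of the R3 get rearranged into the three crossings on the other side), and finally cancel the auxiliary pair by an inverse R2. Each step is either an R2 move, already handled by the previous paragraph, or an isotopy rel the relevant crossing balls, handled by Proposition~\ref{prop:isotopy-cube}; concatenating the resulting isomorphisms in $\hFD$ produces $\bC(K_1, N_1) \cong \bC(K_2, N_2)$. The main obstacle is the geometric bookkeeping in this R3 decomposition: choosing the intermediate diagrams so that at every stage the configuration remains a plane pseudo-diagram, every intermediate partial resolution is an unlink, and the isotopy segments can be chosen constant on the necessary crossing balls. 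Once those checks are carried out case-by-case, the proposition follows.
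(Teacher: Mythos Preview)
Your treatment of Reidemeister~I and~II is correct and matches the paper's approach exactly: drop the relevant crossing(s) using admissibility from Corollary~\ref{cor:plane-diagram-criterion} (or its one-crossing analogue), then isotope rel the remaining crossing balls via Proposition~\ref{prop:isotopy-cube}.

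For Reidemeister~III, however, your route diverges from the paper's and runs into a genuine problem. You propose to factor R3 through auxiliary R2 moves and ``planar isotopies'' that slide a strand past the new pair. But a planar isotopy cannot change the combinatorics of a diagram, and an isotopy rel $B(N)$ in the sense of Proposition~\ref{prop:isotopy-cube} cannot move any arc that passes through a crossing ball still in $N$; so once the auxiliary pair is in $N$, you cannot slide another strand through it. More fundamentally, R3 is not a composite of R2 moves and ambient isotopies fixing the crossing balls, so the ``bookkeeping'' you defer is not a matter of care but an actual obstruction.

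The paper avoids this entirely by observing that R3 is handled by \emph{exactly the same mechanism} as R2. In the R3 picture there is one strand (the top one) that is the over-strand at two of the three crossings, and those two crossings $c_1,c_2$ are adjacent along that strand. Hence Corollary~\ref{cor:plane-diagram-criterion} applies directly: drop $c_1$ and $c_2$ from $N_1$ to obtain $N''$, isotope the now-unconstrained top strand across the third crossing (an isotopy rel $B(N'')$), and then add two crossings back to obtain $N_2$. Corollary~\ref{cor:plane-diagram-iso} and Proposition~\ref{prop:isotopy-cube} give the three isomorphisms in $\hFD$. So the fix is simply to replace your R3 paragraph with this direct three-step argument.
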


\begin{proof}
    This is a straightforward consequence of
    Corollary~\ref{cor:plane-diagram-iso}. Thus, for example, in the
    case that $D_{1}$ and $D_{2}$ differ by a Reidemeister move of
    type III as shown in Figure~\ref{fig:Reidemeister3}, 
    then we pass from $(K_{1}, N_{1})$ to $(K_{2}, N_{2})$
    in three steps, as follows.
    \begin{enumerate}
    \item First, drop the two over-crossings $c_{1}$ and $c_{2}$ from the
        crossing set $N_{1}$ to obtain a smaller set of crossings
        $N''$.
    \item Second, apply an isotopy to $K_{1}$ relative to $B(N'')$.
    \item Third, introduce two new crossings to $N''$ to obtain the
        crossing-set $N_{2}$.
    \end{enumerate}
    \begin{figure}
        \centering
        \includegraphics[height=2.75 in]{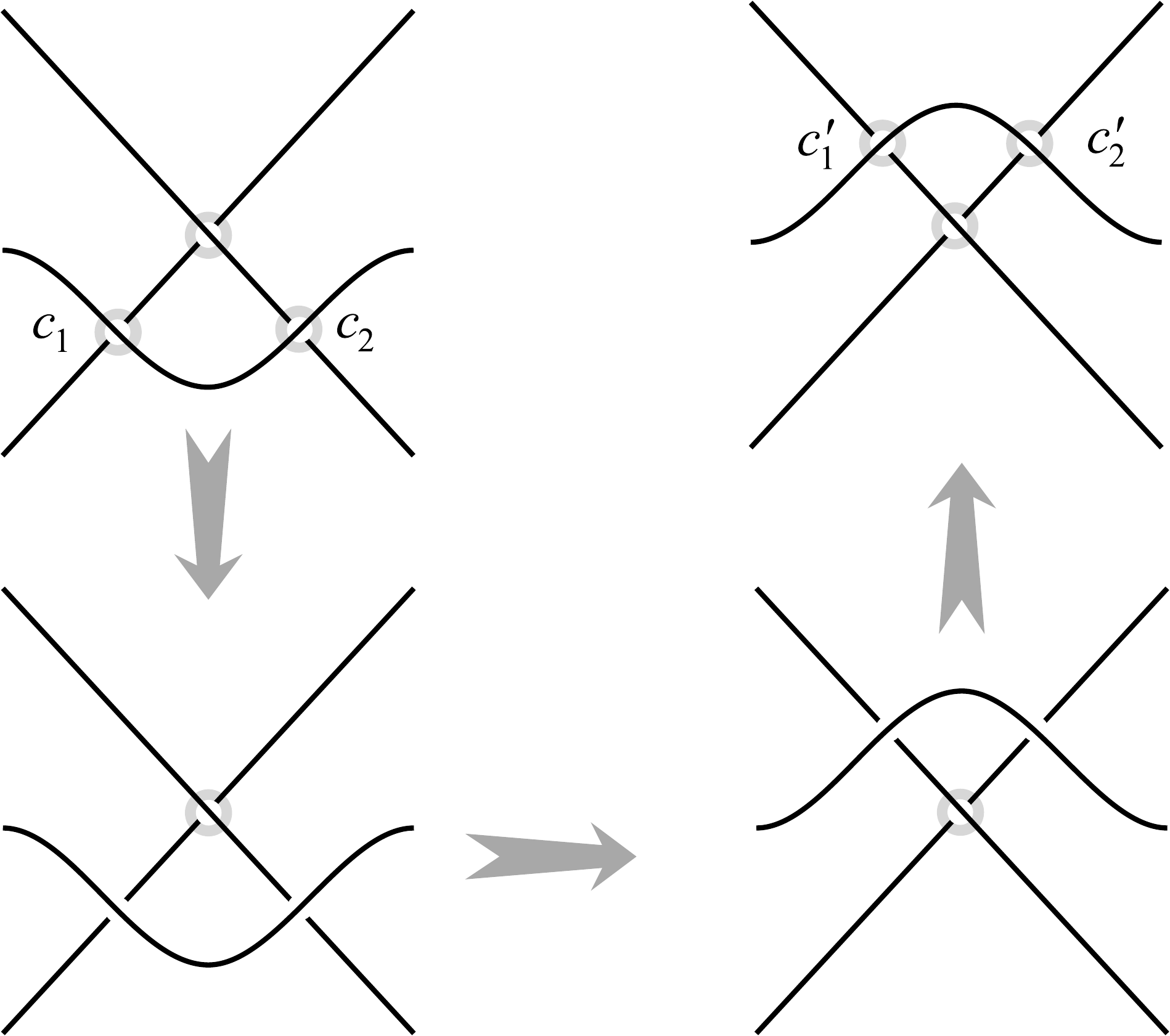}
        \caption{Dropping two crossings, isotoping a strand, and adding two crossings, to acheive Reidemeister III.}
        \label{fig:Reidemeister3}
    \end{figure}
    Corollary~\ref{cor:plane-diagram-iso} applies to the first and
    third steps, while Proposition~\ref{prop:isotopy-cube} applies
    to the middle step. The other types of Reidemeister moves are
    treated the same way.
\end{proof}

\begin{corollary}\label{cor:invariant-in-Cq}
    Let $K_{1}$ and $K_{2}$ be isotopic oriented links in $\R^{3}$ and
    let $N_{1}$ and $N_{2}$ be crossing-sets arising from diagrams
    $D_{1}$ and $D_{2}$ for these links. After choices of good
    auxiliary data, let $\bC(K_{i},N_{i})$ be the corresponding
    cubes ($i=1,2$). Then, with the filtrations defined by $q$, these
    cubes define isomorphic objects in the category
    $\hFD$. \qed
\end{corollary}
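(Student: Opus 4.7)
The plan is to combine the single-Reidemeister-move invariance of the preceding Proposition with the rel-crossing-ball isotopy invariance of Proposition~\ref{prop:isotopy-cube}, using Reidemeister's theorem for oriented link diagrams. Since the links $K_{1}, K_{2}$ are assumed ambient isotopic in $\R^{3}$, I would first fix an ambient isotopy $\phi_{t}$ of $\R^{3}$ with $\phi_{1}(K_{1}) = K_{2}$, and use it to reduce to the case $K_{1} = K_{2} = K$: the transported diagram $\phi_{1}(D_{1})$ (after a small perturbation to restore genericity of projection if needed) is another diagram of $K$, and Proposition~\ref{prop:isotopy-cube}, applied to a subdivision of $\phi_{t}$ into small sub-isotopies each constant on a suitably chosen set of crossing balls, identifies $\bC(K_{1}, N_{1})$ with the cube of this transported diagram in $\hFD$.

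Next, with both diagrams now associated to the same link $K$, Reidemeister's theorem for oriented diagrams furnishes a finite sequence $D_{1} = D^{(0)}, D^{(1)}, \ldots, D^{(k)} = D_{2}$ in which consecutive diagrams differ by either a Reidemeister move of type I, II, or III, or by a planar isotopy of the diagram avoiding non-generic projections. Each Reidemeister step contributes an isomorphism in $\hFD$ by the preceding Proposition, while each planar-isotopy step is realized by an ambient isotopy of $K$, which after subdivision is constant on a suitable set of crossing balls and so is covered by Proposition~\ref{prop:isotopy-cube}. Composing the resulting chain gives the desired isomorphism $\bC(K_{1}, N_{1}) \cong \bC(K_{2}, N_{2})$ in $\hFD$. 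Freedom in the choice of good auxiliary data at intermediate stages is absorbed into $\hFD$ through the trivial-isotopy case of Proposition~\ref{prop:isotopy-cube}, as noted in the remark following it.

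The main delicacy is the management of the crossing balls under isotopy. Proposition~\ref{prop:isotopy-cube} assumes the isotopy is constant on the union $B(N)$ of crossing balls, so one must check that every relevant isotopy—both the ambient isotopy carrying $K_{1}$ to $K_{2}$ and the planar isotopies between intermediate diagrams—can be decomposed into finitely many pieces each of which leaves some chosen set of crossing balls pointwise fixed. This is a standard genericity and compactness argument; alternatively, one can shrink or deform the crossing balls along the way, since what matters for the chain-level constructions is only the isotopy class of the resulting pseudo-diagram. Everything else is a direct concatenation of the isomorphisms already in hand.
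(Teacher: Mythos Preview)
Your approach is correct and is essentially what the paper intends: the corollary is stated with a \qed\ immediately after the Reidemeister-move Proposition, so the implicit argument is exactly ``Reidemeister's theorem plus the preceding Proposition plus Proposition~\ref{prop:isotopy-cube}.''

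One remark: your first step, reducing to $K_{1}=K_{2}$ by transporting along an arbitrary ambient isotopy $\phi_{t}$ and invoking Proposition~\ref{prop:isotopy-cube} on a subdivision, is both unnecessary and not quite justified as stated. An arbitrary $\phi_{t}$ will typically carry the projection of $K_{1}$ through non-generic configurations, so no subdivision makes each piece constant on a fixed set of crossing balls; you would end up needing Reidemeister moves anyway, which is circular. You can simply omit this step: Reidemeister's theorem already applies to diagrams of \emph{isotopic} links, and the paper's Reidemeister Proposition is stated for two distinct links $K_{1},K_{2}$ in $\R^{3}$ whose diagrams differ by a single move. For the planar-isotopy steps and for matching the endpoint links with the given $K_{1},K_{2}$, the cleanest observation is that the construction of $\bC(K,N)$ is tautologically invariant under ambient diffeomorphisms of $\R^{3}$ (transport all the data, including auxiliary data), after which the trivial-isotopy case of Proposition~\ref{prop:isotopy-cube} absorbs any residual change of auxiliary data or of the precise crossing-ball embeddings.
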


\section{The $h$-filtration on cubes}
\label{sec:h}

We now turn to the cohomological filtration $h$. Although there are a
few important differences, we can for the most adapt the sequence of
arguments that we have used for the $q$ filtration. This will lead us (for
example) to a version of Corollary~\ref{cor:invariant-in-Cq} for $h$.
 
To begin, we suppose that
$(K,N)$ is a pseudo-diagram (Definition~\ref{def:pseudo-diagram}).
On the cube $\bC(N)$, we define the $h$-grading by declaring
that the 
summand
$C_{v}$ has grading
\begin{equation}\label{eq:h-def}
h|_{C_{v}}= - \left( \sum_{c\in N} v(c) \right)+ \frac{1}{2} \sigma(v,o) +n_{-}.
\end{equation}
Here, as in equation~\eqref{eq:q-def} where we defined the $q$
grading, the term $n_{-}$ denotes the number of negative crossings and
$o$ denotes the oriented resolution.
As with $q$, we can extend the
definition of $h$ beyond the cube $\{0,1\}^{N}$ to all $v\in \Z^{N}$
for which $K_{v}$ is an unlink, because $\sigma(v,o)$ can be defined
for all such $v$.

Unlike $q$, the grading $h$ does not have period $3$ in each
coordinate. (See Lemma~\ref{lem:q-is-periodic}.) Instead we have the
following computation.

\begin{lemma}\label{lem:h-is-not}
    Suppose  $v$ is such that $K_{v}$ is an unlink, and suppose $v-u$ is divisible by
    $3$ in $\Z^{N}$ so that we have an isomorphism $\beta: C_{v} \to
    C_{u}$ 
     as abelian groups. Then
    the $h$-gradings on $C_{v}$ and $C_{u}$ are related by
    \[
       h|_{C_{v}} - h|_{C_{u}} = - \frac{ 2}{3} \sum_{c\in N} ( v(c) - u(c) )
     \]
\end{lemma}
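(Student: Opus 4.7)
The plan is a direct computation from the definition of $h$ in equation~\eqref{eq:h-def}, using Lemma~\ref{lem:3-step-sig} for the self-intersection term. Writing down the difference term by term, the quantities $n_-$ and the oriented resolution $o$ are features of $(K,N)$ and do not depend on $v$ or $u$, so they cancel. This leaves only the contribution from $-\sum_c v(c)$ and from $\tfrac12\sigma(v,o)$.

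First I would note that, by the additivity of $\sigma$ established just before Lemma~\ref{lem:3-step-sig}, we have $\sigma(v,o) - \sigma(u,o) = \sigma(v,u)$, so the self-intersection terms combine into a single $\tfrac12\sigma(v,u)$. Next, since $v-u$ is divisible by $3$ and $K_v$ is an unlink, $K_u$ is also an unlink and Lemma~\ref{lem:3-step-sig} applies to give
\[
  \sigma(v,u) = \tfrac{2}{3}\sum_{c\in N}(v(c)-u(c)).
\]
Substituting, the difference becomes
\[
   h|_{C_v} - h|_{C_u}
   = -\sum_{c\in N}(v(c)-u(c)) + \tfrac{1}{2}\cdot\tfrac{2}{3}\sum_{c\in N}(v(c)-u(c))
   = -\tfrac{2}{3}\sum_{c\in N}(v(c)-u(c)),
\]
which is the desired formula.

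There is no real obstacle here, since the statement is essentially a bookkeeping consequence of the $3$-step behavior of $\sigma$: the would-be periodicity of $-\sum v(c)$ contributes $-\sum(v(c)-u(c))$, and only $1/3$ of this is absorbed by the self-intersection correction $\tfrac12\sigma(v,o)$, leaving the residual $-\tfrac{2}{3}\sum(v(c)-u(c))$. This is precisely the failure of $3$-periodicity that distinguishes $h$ from $q$, where (compare Lemma~\ref{lem:q-is-periodic}) the coefficient in front of $\sigma(v,o)$ was $3/2$ rather than $1/2$ and therefore exactly cancelled the corresponding term.
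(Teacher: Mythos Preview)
Your proof is correct and follows exactly the approach the paper indicates: the paper's own proof is the single sentence ``This is immediate from Lemma~\ref{lem:3-step-sig} and the formula for $h$,'' and you have simply written out that computation in full.
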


\begin{proof}
    This is immediate from Lemma~\ref{lem:3-step-sig} and the formula
    for $h$.
\end{proof}

The $q$-grading has constant parity on $\bC$, but the $h$-grading does
not. The next lemma shows that $(\bC, \bF)$ can be regarded as a
$\Z/2$-graded complex with grading given by $h$.

\begin{lemma}\label{lem:h-is-odd}
    The differential $\bF$ on $\bC$ has odd degree with respect to the $\Z/2$
    grading defined by $h$ mod $2$.
\end{lemma}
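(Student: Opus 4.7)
The plan is to decompose $\bF$ into its matrix entries $f_{vu}$ with $v \ge u$ in $\{0,1\}^{N}$ and $(v,u)$ of type $0$ or $1$, and to check that each nonzero entry shifts $h$ by an odd integer. Under the standing assumption of good auxiliary data, $d_{v}=0$ for every $v$, so the type-$0$ terms $f_{vv}=\pm d_{v}$ vanish and only the type-$1$ contributions need analysis. For a type-$1$ pair $v>u$ differing in a single coordinate, the identity $\sum_{c} v(c)-\sum_{c} u(c)=1$ together with additivity of $\sigma$ gives the direct calculation
\[
h|_{C_{u}}-h|_{C_{v}}=1-\tfrac{1}{2}\sigma(v,u).
\]

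When the cobordism $S_{vu}$ is orientable, $\sigma(v,u)=0$ and the shift is $1$, which is odd; this is the easy case. The main obstacle is the non-orientable case, in which $\sigma(v,u)=\pm 2$ and the naive shift is even ($0$ or $2$). To handle it we argue that $f_{vu}$ in fact vanishes whenever $S_{vu}$ is non-orientable, so these would-be ``even-degree'' contributions never actually appear in $\bF$.

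For this vanishing, we use the dimension formula of the lemma leading to Corollary~\ref{cor:proto-q-inequality}: a zero-dimensional contributing moduli space requires
\[
Q(\beta_{0})-Q(\beta_{1})=8\kappa-1+\tfrac{1}{2}\sigma(v,u),
\]
and combined with $\kappa\equiv\tfrac{1}{16}\sigma(v,u)\pmod{\tfrac{1}{2}}$ and the non-negativity $\kappa\ge 0$, this forces $Q(\beta_{0})-Q(\beta_{1})$ to be odd whenever $\sigma(v,u)=\pm 2$. Topologically, however, the non-trivial piece of a non-orientable type-$1$ saddle is a twice-punctured $\RP^{2}$, exactly as in the proof of Lemma~\ref{lem:max-is-2}, and this surface has one boundary circle on each side; so $b_{0}(K_{v})=b_{0}(K_{u})$. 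Since the parity of $Q$ on $C_{v}$ coincides with $b_{0}(K_{v})$ mod $2$, $Q(\beta_{0})-Q(\beta_{1})$ would be forced to be even, a contradiction. Hence no contributing moduli points exist and $f_{vu}=0$.

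Combining the two cases, every nonzero $f_{vu}$ summand of $\bF$ shifts $h$ by an odd integer, which is the statement that $\bF$ has odd degree with respect to the $\Z/2$-grading $h\bmod 2$. The argument closely parallels the proof that $q$ has constant parity given in Section~\ref{sec:q-cube}: once the topological description of non-orientable saddles as twice-punctured $\RP^{2}$'s is in place, the parity obstruction to any contribution from a non-orientable $S_{vu}$ is automatic.
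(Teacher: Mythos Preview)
Your proof is correct. The organization differs from the paper's, so let me briefly compare.

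The paper gives a single mod-$2$ dimension computation valid for every pair $v\ge u$ in $\{0,1\}^{N}$: writing out $\dim M_{vu}(\beta,\alpha)$ over the family $\bG_{vu}$ and reducing modulo $2$ (using $\kappa\equiv\tfrac{1}{16}(S\cdot S)\pmod{\tfrac12}$ and $\chi(\bar S)\equiv\tfrac12(\bar S\cdot\bar S)\pmod 2$), one obtains
\[
\dim M_{vu}(\beta,\alpha)\equiv \tfrac12(S_{vu}\cdot S_{vu})-\chi(S_{vu})-1 \pmod 2,
\]
and observes that $\tfrac12(S_{vu}\cdot S_{vu})+\chi(S_{vu})$ is exactly $h|_{C_{v}}-h|_{C_{u}}$. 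Your argument reaches the same endpoint by a case split on the orientability of a type-$1$ saddle: in the orientable case you compute the $h$-shift directly as $1$, and in the non-orientable case you run what is essentially the same parity count, phrased through $Q$, to conclude $f_{vu}=0$. (Your appeal to $\kappa\ge 0$ is not actually needed for the parity; the congruence alone suffices.)

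Both arguments are equivalent in content. The advantage of the paper's formulation is that it applies uniformly to all $v\ge u$ in the cube, not just type $\le 1$; this is exploited later (for instance in the analysis of the maps $\Psi$, $\Phi_{2}$, and the chain homotopies $\bF_{2,-1}$, $\bH_{2,-1}$ when a crossing is dropped) where one needs the same odd-degree conclusion for components $\bF_{ij}$ of type $2$ and $3$. Your type-$1$ case analysis, while sufficient for the lemma as stated, would need to be redone or generalized at that point.
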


\begin{proof}
    For $v\ge u$ in $\{0,1\}^{N}$, the corresponding component
    $f_{vu}$ of $\bF$ is obtained by counting instantons in
    $0$-dimensional moduli spaces $M_{vu}(\beta,\alpha)$ parametrized
    by a family of metrics $\bG_{vu}$  of dimension
    \[
    \begin{aligned}
        \dim \bG_{vu} &= -\chi(S_{vu}) - 1\\
        &= \sum_{c} (v(c) - u(c)) - 1.
    \end{aligned}   \]
     The links $K_{v}$ and $K_{u}$ are unlinks and the perturbations
     are chosen so that the critical points all have the same index
     mod $2$. So the fiber dimension of $M_{vu}(\beta,\alpha) \to \bG_{vu}$
     is independent of $\beta$ and $\alpha$ mod $2$, and is given by
     \eqref{eq:M-dim-bar}. Taking account of the dimension of the
     $\bG_{vu}$, we can therefore write
    \[
    \begin{aligned}
        \dim M_{vu} (\beta, \alpha) &= 8\kappa + \chi(\bar S_{vu}) +
        \frac{1}{2}( \bar
        S_{vu}\cdot\bar S_{vu}) + \dim \bG_{vu} \\
        &= 8\kappa + \chi(\bar S_{vu}) + \frac{1}{2}( \bar S_{vu}\cdot\bar S_{vu}) -
        \chi(S_{vu}) -1
    \end{aligned}
    \]
   mod $2$. For any closed surface $\bar S$ in $\R^{4}$, we have $\chi(\bar S) =
   (1/2)(\bar S \cdot \bar S)$ mod $2$. We also have
   $\kappa = \tfrac{1}{16}(S\cdot
     S)$ modulo $\tfrac{1}{2}\Z$. So the above formula can be written
     \[
          \dim M_{vu}(\beta,\alpha) = \frac{1}{2}(S_{vu} \cdot S_{vu}) - \chi(S_{vu})
          - 1
     \]
     mod $2$. So the component $f_{vu}$ can be non-zero only when
     $\frac{1}{2}(S_{vu} \cdot S_{vu}) + \chi(S_{vu})$ is odd. On the other hand,
     $\frac{1}{2}(S_{vu} \cdot S_{vu}) + \chi(S_{vu})$ is precisely the difference in
     $h$ mod $2$, between $C_{v}$ and $C_{u}$.
\end{proof}

As well as having odd degree, the differential $\bF$ preserves the
decreasing filtration defined by $h$:

\begin{proposition}\label{prop:C-is-filtered-by-h}
    If $(K,N)$ is a pseudo-diagram, then the differential $\bF
    : \bC \to \bC$ has order $\ge 1$ with respect to 
     the decreasing filtration defined by $h$.
\end{proposition}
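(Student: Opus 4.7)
The plan is to reduce the statement to a bound on each individual matrix component $f_{vu}\colon C_{v}\to C_{u}$ of $\bF$, where $v\ge u$ in $\{0,1\}^{N}$. Because $h$ is constant on each summand $C_{v}$ of $\bC$, the claim that $\bF$ has order $\ge 1$ with respect to the $h$-filtration is equivalent to showing that $h|_{C_{u}} - h|_{C_{v}} \ge 1$ whenever the component $f_{vu}$ is nonzero.

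First I would compute the $h$-difference directly from the definition \eqref{eq:h-def}. Using the additivity $\sigma(v,o) - \sigma(u,o) = \sigma(v,u)$ and the identity $|v-u|_{1} = -\chi(S_{vu})$, the two shift terms $-\sum_{c}v(c)$ and $\frac{1}{2}\sigma(v,o)$ combine to give
\[
   h|_{C_{u}} - h|_{C_{v}} = -\chi(S_{vu}) - \tfrac{1}{2}(S_{vu}\cdot S_{vu}).
\]

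Next, Lemma~\ref{lem:max-is-2} bounds $S_{vu}\cdot S_{vu} \le 2\sum_{c}(v(c)-u(c)) = -2\chi(S_{vu})$, and substituting this into the previous formula shows the right-hand side is already non-negative. So $f_{vu}$ automatically has order $\ge 0$ with respect to $h$, with no hypothesis on nonvanishing.

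To upgrade $\ge 0$ to $\ge 1$, I would reuse the parity computation carried out in the proof of Lemma~\ref{lem:h-is-odd}: nonvanishing of $f_{vu}$ forces the dimension of the relevant zero-dimensional moduli space to be even, which in turn forces $\chi(S_{vu}) + \tfrac{1}{2}(S_{vu}\cdot S_{vu})$ to be odd. Consequently $h|_{C_{u}} - h|_{C_{v}}$ is both non-negative and odd, hence $\ge 1$, as required. There is no real obstacle here: the two ingredients — non-negativity coming from Lemma~\ref{lem:max-is-2} and oddness coming from Lemma~\ref{lem:h-is-odd} — have both been established in the preceding material, so the proof is essentially an assembly of those two facts.
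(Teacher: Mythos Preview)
Your proposal is correct and follows essentially the same route as the paper: compute $h|_{C_{u}}-h|_{C_{v}} = \sum_{c}(v(c)-u(c)) - \tfrac{1}{2}\sigma(v,u)$, invoke Lemma~\ref{lem:max-is-2} to get order $\ge 0$, and then use the mod-$2$ parity from Lemma~\ref{lem:h-is-odd} to upgrade to order $\ge 1$. The only cosmetic difference is that you write the formula in terms of $\chi(S_{vu})$ while the paper writes it in terms of $\sum_{c}(v(c)-u(c))$.
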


\begin{proof}
    This follows from Lemma~\ref{lem:max-is-2}, as did the
    corresponding statement for the $q$-filtration,
    Proposition~\ref{prop:C-is-filtered}. Indeed, for any $v\ge u$ in
    the cube $\{0,1\}^{N}$, the corresponding map $f_{vu} : C_{v} \to
    C_{u}$ of $\bF$ satisfies
    \[
    \begin{aligned}
        \ord_{h} f_{vu} &\ge h|_{C_{u}} - h|_{C_{v}} \\ &= \left(
            \sum_{c\in N} ( v(c) - u(c)) \right)- \frac{1}{2}
        \sigma(v,u).
    \end{aligned}
     \]
      So we obtain $\ord_{h} f_{vu} \ge 0$ directly from
      Lemma~\ref{lem:max-is-2}. To actually obtain $\ord_{h} f_{vu}\ge
      1$, which is the desired result, we appeal to the fact that
      $\bF$ has odd degree with respect to the mod-2 grading, as we
      have seen in the previous lemma.
\end{proof}

\begin{remark}
    In contrast to the corresponding result for the $q$-filtration
    (Proposition~\ref{prop:C-is-filtered}), the proposition above does
    not require the hypothesis that $(K,N)$ has small
    self-intersection numbers.
\end{remark}

Let us now introduce the category $\mathfrak{C}_{h}$,  whose objects are
filtered, finitely-generated abelian groups with a
differential of order $\ge 1$, and whose morphisms are
chain maps of order $\ge 0$ modulo 
chain-homotopies of order $\ge -1$. When equipped with the filtration
defined by $h$, the complex $\bC=\bC(N)$, with its differential $\bF$, defines an object in
the category $\mathfrak{C}_{h}$, by the proposition above, though this
object is dependent on the set of crossings
$N$, and the choice of good auxiliary data. As in the case of
the $q$-filtration, we now wish to show that different choices lead to
isomorphic objects in this category; and as before, the essential step
is to compare $\bC(N)$ to $\bC(N')$, where $N'$
is obtained from $N$ by ``forgetting'' a crossing.

So we suppose again that $N' = N \setminus c^{*}$, and that both $(K,N)$
and $(K,N')$ are pseudo-diagrams. (We do not need to assume that these
pseudo-diagrams have small self-intersection numbers.)
The cubes $\bC(N)$
and $\bC(N')$ have gradings which we call $h$ and $h'$
respectively. On the other hand, we can identify $\bC(N')$ as before
with $\bC_{2}$, where $\bC_{i}$ for $i\in \Z$ is defined by
\eqref{eq:bCi-def} using the crossing-set $N$. In this way, we can
regard the grading $h$ as being defined also on $\bC(N')$. The
following two lemmas are the counterpart of 
Lemma~\ref{lem:same-grading-q} for the $h$-grading.

\begin{lemma}\label{lem:diff-grading-h}
    With the cohomological gradings $h$ and $h'$ corresponding to the
    crossing-sets $N$ and $N'$,  the filtered complexes $\bC_{2}$ and 
    $\bC(N')[1]$ are isomorphic in $\mathfrak{C}_{h}$. Here the
    notation $\bC[n]$ denotes the filtered complex obtained from $\bC$
    by shifting the filtration down by $n$ so that the map $\bC[n]\to
    \bC$ has order $\ge n$.
\end{lemma}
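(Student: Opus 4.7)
The plan is to mimic the proof of Lemma~\ref{lem:same-grading-q}, now with the $h$-grading in place of the $q$-grading. First I would establish the $h$-analogue of Proposition~\ref{prop:isotopy-cube}: cylindrical cobordisms implementing an isotopy of $(K,N)$ give chain-homotopy equivalences of the cube complexes which preserve the decreasing $h$-filtration. The argument is identical to the $q$ case, with Proposition~\ref{prop:C-is-filtered-by-h} taking the role of Proposition~\ref{prop:C-is-filtered}, and with the observation that the families of metrics on the $T_{vu}$ have dimension one larger than those on the $S_{vu}$, which only helps the filtration estimates. Using this, I reduce to the situation in which the auxiliary data for $(K,N')$ is chosen so that $\bC(N')$ and $\bC_2$ coincide as chain complexes, with the cylindrical-cobordism map $\bT$ equal to the identity. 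The lemma then reduces to a comparison of the two filtration functions $h$ and $h'$ on a common underlying complex.

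For any $v' \in \{0,1\}^{N'}$, let $v = (v', 2) \in \Z^N$ be its extension by $v(c^*) = 2$. Then $C_v = C_{v'}$ with coinciding $Q$-gradings, and subtracting the two instances of~\eqref{eq:h-def} gives
\[
h|_{C_v} - h'|_{C_{v'}} = -v(c^*) + \tfrac{1}{2}\bigl(\sigma(v,o) - \sigma(v',o')\bigr) + (n_- - n'_-) = -2 + \tfrac{1}{2}\bigl(\sigma(v,o) - \sigma(v',o')\bigr) + (n_- - n'_-).
\]
I then split by the sign $\epsilon$ of $c^*$ exactly as at the end of the proof of Lemma~\ref{lem:same-grading-q}. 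When $c^*$ is positive, $o(c^*) = 0$ so $\sigma(v,o) - \sigma(v',o') = 2$ while $n_- = n'_-$, giving $h - h' = -1$. When $c^*$ is negative, $o(c^*) = 1$ so $\sigma(v,o) - \sigma(v',o') = 0$ while $n_- - n'_- = 1$, again giving $h - h' = -1$. Thus in both cases the identity map $\bC_2 \to \bC(N')$ raises the $h$-filtration by exactly $1$ on every generator, which is to say it is filtration-preserving as a map $\bC_2 \to \bC(N')[1]$, and hence an isomorphism in $\mathfrak{C}_h$.

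The main obstacle, such as it is, lies not in the Floer-theoretic input but in tracking the self-intersection term $\sigma(v,o) - \sigma(v',o')$ across the two sign cases. This relies on the same $\RP^2 \subset \R^4$ computation (self-intersection $\pm 2$) and the reduction to the case where $v$ and $o$ differ only at $c^*$ that already appeared in the proofs of Lemma~\ref{lem:3-step-sig} and Lemma~\ref{lem:same-grading-q}; once those numerical inputs are granted, the comparison of $h$ and $h'$ is forced.
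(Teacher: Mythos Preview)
Your argument is correct and follows essentially the same route as the paper's own proof: reduce via the cylindrical-cobordism isotopy to the case where $\bC(N')$ and $\bC_{2}$ coincide as complexes, then compare the two $h$-gradings directly from the defining formula~\eqref{eq:h-def}, splitting on the sign of $c^{*}$ exactly as in Lemma~\ref{lem:same-grading-q}. The only cosmetic difference is that the paper packages $n_{-}-n'_{-}$ as $\tfrac{1}{2}(1-\epsilon)$, while you evaluate it case by case; the computation and conclusion $h-h'=-1$ are identical.
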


\begin{proof}
The lemma can again be reduced to the case that $\bC(N')$ and
$\bC_{2}$ are the same complex. So we need to prove that $h'=h+1$.
As in the proof of Lemma~\ref{lem:same-grading-q}, we write $\epsilon$
for the sign of the crossing $c^{*}$, and we have
    \[
\begin{aligned}
    h-h' &=  - v(c^{*}) +\frac{1}{2}(\sigma(v,o)-\sigma(v',o'))+
             n_{-} - n'_{-}
 \\
    &= -2+\frac{1}{2}(\sigma(v,o)-\sigma(v',o'))+\frac{1}{2}(1-\epsilon).
 \end{aligned}
\]
Once again, we have $o(c^{*})=0$ or $1$ according as $\epsilon$ is $1$
or $-1$ respectively. Thus
 for a positive crossing we have $\sigma(v,o)-\sigma(v',o')=2$ and
 $1-\epsilon=0$, while for a negative crossing
 we have $\sigma(v,o)-\sigma(v',o')=0$ and $1-\epsilon=2$.  In either case the difference is $-1$.
\end{proof}

Since the grading $h$ is not $3$-periodic, we get a different
result when we compare $\bC(N')$ with $\bC_{-1}$ instead of
$\bC_{2}$.

\begin{lemma}\label{lem:diff-grading-h2}
    With the cohomological gradings $h$ and $h'$ corresponding to the
    crossing-sets $N$ and $N'$,  the filtered complexes $\bC_{-1}$ and 
    $\bC(N')[-1]$ are isomorphic in $\mathfrak{C}_{h}$. 
\end{lemma}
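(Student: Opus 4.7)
The plan is to run the same argument as in the proof of Lemma~\ref{lem:diff-grading-h}, but with the value $v(c^{*})=2$ replaced by $v(c^{*})=-1$, using Lemma~\ref{lem:3-step-sig} to convert between the two.

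First I would arrange, by a suitable choice of good auxiliary data for $(K,N')$ (exactly as in the opening move of the proof of Lemma~\ref{lem:diff-grading-h}), that $\bC(N')$ coincides with $\bC_{-1}$ as a chain complex, with the composite cylindrical cobordism map $\bT \colon \bC(N') \to \bC_{-1}$ acting as the identity on underlying groups. Under this identification, a vertex $v' : N' \to \{0,1\}$ of $\bC(N')$ corresponds to the point $v\in\Z^{N}$ with $v|_{N'}=v'$ and $v(c^{*})=-1$. It then suffices to compute $h-h'$ on corresponding generators.

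Directly from the definition \eqref{eq:h-def},
\[
   h-h' = -v(c^{*}) + \tfrac{1}{2}\bigl(\sigma(v,o)-\sigma(v',o')\bigr) + (n_{-}-n'_{-}).
\]
Writing $v^{(2)}$ for the point with $v^{(2)}|_{N'}=v|_{N'}$ and $v^{(2)}(c^{*})=2$, additivity of $\sigma$ gives
\[
   \sigma(v,o)-\sigma(v',o') \;=\; \bigl[\sigma(v^{(2)},o)-\sigma(v',o')\bigr] + \sigma(v,v^{(2)}).
\]
The bracketed term was already evaluated in the proof of Lemma~\ref{lem:same-grading-q} (and re-used in Lemma~\ref{lem:diff-grading-h}): it equals $2$ for a positive crossing $c^{*}$ and $0$ for a negative crossing. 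Lemma~\ref{lem:3-step-sig} gives $\sigma(v,v^{(2)})=\tfrac{2}{3}(-1-2)=-2$. Substituting this, together with $-v(c^{*})=1$ and $n_{-}-n'_{-}=\tfrac{1}{2}(1-\epsilon)$ (equal to $0$ or $1$ as $\epsilon=\pm 1$), I expect to obtain $h-h'=1$ in both the positive and negative cases.

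Once $h = h'+1$ is verified on corresponding generators, the conclusion follows from the shift convention: an element at filtration level $g$ in $\bC(N')[-1]$ sits at level $g-1$ in $\bC(N')$, so $g = h'+1 = h$, matching the level in $\bC_{-1}$. Thus the identity map realises the desired isomorphism $\bC_{-1}\cong \bC(N')[-1]$ in $\mathfrak{C}_{h}$. The only non-routine input is the $3$-step value $\sigma(v,v^{(2)})=-2$, and this is already supplied by Lemma~\ref{lem:3-step-sig}; the rest is bookkeeping, so I do not expect any real obstacle.
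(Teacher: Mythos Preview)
Your proposal is correct and follows essentially the same route as the paper. The paper's one-line proof simply cites Lemma~\ref{lem:diff-grading-h} together with Lemma~\ref{lem:h-is-not} (the $3$-step shift of $h$), whereas you unpack that composition by redoing the $h-h'$ computation directly and invoking Lemma~\ref{lem:3-step-sig} for the value $\sigma(v,v^{(2)})=-2$; since Lemma~\ref{lem:h-is-not} is itself an immediate consequence of Lemma~\ref{lem:3-step-sig}, the two arguments are the same in substance.
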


\begin{proof}
    This follows from the previous lemma and Lemma~\ref{lem:h-is-not}.
\end{proof}

We can now state our main result about the effect of dropping a
crossing, for the $h$-filtration.

\begin{proposition}
    Suppose that $N'=N\setminus\{c^{*}\}$ and that both $(K,N)$ and
    $(K,N')$ are 
    pseudo-diagrams. Then the complexes
    $\bC(N)$ and $\bC(N')$, equipped with the
    filtrations defined by $h$ and $h'$, are isomorphic in the
    category $\mathfrak{C}_{h}$.
\end{proposition}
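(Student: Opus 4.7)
The proof will follow the template of Proposition~\ref{prop:ad-cubes} (the $q$-filtration analog), using the same chain maps
\[
\Psi = \begin{bmatrix}\bF_{1,-1} & \bF_{0,-1}\end{bmatrix} : \bC(N) \to \bC_{-1}, \qquad \Phi_{2} = \begin{bmatrix}\bF_{21}\\\bF_{20}\end{bmatrix} : \bC_{2} \to \bC(N),
\]
along with $\Phi_{-1} : \bC_{-1} \to \bC_{-2} \oplus \bC_{-3}$, and the same chain-homotopy relations coming from equation~\eqref{eq:type-3-identity}. From those relations I would derive, as before, that $\Psi\Phi_{2}$ is chain-homotopic to $\bT_{2,-1}$ (via $\bF_{2,-1}+\bH_{2,-1}$), and that $\Phi_{-1}\Psi$ is chain-homotopic to a lower-triangular map with $\bT_{1,-2}$ and $\bT_{0,-3}$ on its diagonal (via $\bL$ and the homotopy~\eqref{eq:chain-Hmatrix}).

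The principal new work is the $h$-order analysis. For every component $f_{vu}$ with $v\ge u$ and $K_{v}$, $K_{u}$ both unlinks, the exact $h$-shift is $\sum_{c}(v(c)-u(c))-\tfrac{1}{2}\sigma(v,u)$, which is $\ge 0$ by the extension of Lemma~\ref{lem:max-is-2} to the larger cube (the proof of that lemma reduces to single-crossing changes between unlinks and goes through unchanged here). So each of $\Psi$, $\Phi_{2}$, $\Phi_{-1}$ has order $\ge 0$, and the chain homotopies $\bF_{2,-1}$, $\bH_{2,-1}$, $\bL$, $\bH_{1,-2}$, $\bH_{0,-3}$ have order $\ge -1$ (in fact $\ge 0$ in $h$), which is within the tolerance allowed for homotopies in $\mathfrak{C}_{h}$.

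Finally I would translate everything across Lemmas~\ref{lem:diff-grading-h} and~\ref{lem:diff-grading-h2}, which identify $\bC_{2}\cong \bC(N')[1]$, $\bC_{-1}\cong\bC(N')[-1]$, and correspondingly $\bC_{-2}\oplus\bC_{-3}\cong\bC(N)[-2]$. Composing $\Psi$ with $\bC_{-1}\cong\bC(N')[-1]$ and $\Phi_{2}$ with $\bC_{2}\cong\bC(N')[1]$, and using the homotopy relations, should assemble into mutually inverse morphisms $\bC(N)\to\bC(N')$ and $\bC(N')\to\bC(N)$ in $\mathfrak{C}_{h}$.

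The hard part will be the bookkeeping in this last step. In the $q$-case 3-periodicity (Lemma~\ref{lem:q-is-periodic}) makes $\bT_{2,-1}$ an isomorphism in $\hFD$ for free; here $h$ is not 3-periodic (Lemma~\ref{lem:h-is-not}), so the maps $\bT_{2,-1}$, $\bT_{1,-2}$, $\bT_{0,-3}$ do not themselves have order $\ge 0$, and the discrepancies must be absorbed by the sharper order bounds on $\Psi$, $\Phi_{2}$, $\Phi_{-1}$. Concretely, I expect one must compute more precisely that the components of $\Psi$ going to $\bC_{-1}$ actually have $h$-order $\ge 1$ (not merely $\ge 0$), and similarly for $\Phi_{-1}$, using the distribution of the $\RP^{2}$-self-intersection across the three steps of a 3-cycle from Lemma~\ref{lem:3-step-sig}; these extra units of order are exactly what cancels the filtration shifts coming from the two auxiliary lemmas, and the final case analysis (positive vs.\ negative $c^{*}$) will have to be checked explicitly.
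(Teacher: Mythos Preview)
Your overall architecture matches the paper's proof exactly: the same chain maps $\Psi$, $\Phi_{2}$, $\Phi_{-1}$, the same homotopies $\bF_{2,-1}$, $\bH_{2,-1}$, $\bL$, $\bH_{1,-2}$, $\bH_{0,-3}$, and the translation through Lemmas~\ref{lem:diff-grading-h} and~\ref{lem:diff-grading-h2}. You have also correctly located the crux: because $h$ is not $3$-periodic, one needs $\Psi$, $\Phi_{2}$, $\Phi_{-1}$ (and the homotopies) to have $h$-order $\ge 1$, not merely $\ge 0$, so that after the $[\pm 1]$ shifts the resulting morphisms and homotopies land in $\mathfrak{C}_{h}$.

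The gap is in the mechanism you propose for that upgrade. You suggest extracting the extra unit of order from Lemma~\ref{lem:3-step-sig} together with a case split on the sign of $c^{*}$. This does not work. The $h$-shift of an individual component $f_{vu}$ is exactly $\sum_{c}(v(c)-u(c))-\tfrac{1}{2}\sigma(v,u)$, and for a single-crossing step between unlinks $\sigma(v,u)$ can equal $+2$ (as in the unlink pseudo-diagram of Figure~\ref{fig:pseudo-unlinks}), giving $h$-shift $0$. Lemma~\ref{lem:3-step-sig} only constrains the \emph{sum} of the three self-intersections around a $3$-cycle to be $2$; it does not prevent any particular step in the composite $\bC_{2}\to\bC_{1}\to\bC_{0}\to\bC_{-1}$ from carrying all of the positive self-intersection. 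So no geometric ``distribution'' argument yields a uniform bound $\ge 1$ on the components of $\Psi$ or of the homotopies.

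The paper's device is instead the parity argument of Lemma~\ref{lem:h-is-odd}: every map built by counting points in zero-dimensional moduli spaces over the families $\bG_{vu}$ (and likewise the families defining $\bH$) shifts $h$ by an \emph{odd} amount whenever it is nonzero. Hence order $\ge 0$ automatically improves to order $\ge 1$ for $\Psi$, $\Phi_{2}$, $\Phi_{-1}$, $\bF_{2,-1}$, $\bH_{2,-1}$, $\bL$, $\bH_{1,-2}$, $\bH_{0,-3}$, and the shift bookkeeping then goes through exactly as you outlined. Once you replace your proposed mechanism with this parity step, your proof coincides with the paper's.
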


\begin{proof}
  We consider again the maps
   \eqref{eq:Psi-Phi-1} and  \eqref{eq:Psi-Phi-2}:
    \[
    \begin{gathered}
        \Psi: \bC_{1} \oplus \bC_{0} \to \bC_{-1} \\
        \Phi_{2}: \bC_{2} \to \bC_{1} \oplus \bC_{0}.
    \end{gathered}
    \]
     With respect to the grading $h$ on $\bC_{i}$, $i\in\Z$, the maps
     $\Psi$ and $\Phi_{2}$ have order $\ge 0$, just as in the proof of
     Proposition~\ref{prop:C-is-filtered-by-h}; and once again,
     because these maps have odd degree with respect to $h$ mod $2$,
     it actually follows that $\Psi$ and $\Phi_{2}$ have order $\ge 1$. 
     Via the isomorphisms in the previous two lemmas, the maps $\Psi$
     and $\Phi_{2}$ therefore become maps
    \[
    \begin{gathered}
        \Psi': \bC_{1} \oplus \bC_{0} \to \bC(N') \\
        \Phi'_{2}: \bC(N') \to \bC_{1} \oplus \bC_{0}.
    \end{gathered}
    \]
     of order $\ge 0$. They therefore define morphisms in $\mathfrak{C}_{h}$.

     If we look at the composite $\Psi \comp
     \Phi_{2} : \bC_{2} \to \bC_{-1}$, we see from the formula \eqref{eq:first-chain-composite}
     and the arguments below it that this chain map is chain-homotopic
     to the isomorphism $\bT_{2,-1}$ given by the cylindrical cobordisms. The
     chain-homotopies are the maps
      \[
            \bF_{2,-1} : \bC_{2} \to \bC_{-1}
      \]
      and 
      \[
               \bH_{2,-1} : \bC_{2} \to \bC_{-1}.
      \]
       Once again, with respect to the filtrations defined by $h$,
       these maps have order $\ge 0$, and therefore order $\ge 1$
       because of the mod $2$ grading. Using the isomorphisms of the
       previous two lemmas again, we obtain chain-homotopies
      \[
            \bF'_{2,-1} : \bC(N') \to \bC(N')
      \]
      and 
      \[
               \bH'_{2,-1} :  \bC(N') \to \bC(N').
      \]
      of order $\ge -1$.
      So $\Psi' \comp \Phi_{2}'$ is
       the identity morphism in the category $\mathfrak{C}_{h}$. The
       argument for $\Phi_{-1}' \comp \Psi'$ is very similar.
\end{proof}

With the above result about forgetting a single crossing, we can now
continue just as in the case of the $q$-filtration, to prove
invariance under Reidemeister moves. So we arrive at the following
statement, exactly analogous to Corollary~\ref{cor:invariant-in-Cq} above:

\begin{corollary}\label{cor:invariant-in-Ch}
    Let $K_{1}$ and $K_{2}$ be isotopic oriented links in $\R^{3}$ and
    let $N_{1}$ and $N_{2}$ be crossing-sets arising from diagrams
    $D_{1}$ and $D_{2}$ for these links. After choices of good
    auxiliary data, let $\bC(K_{i},N_{i})$ be the corresponding
    cubes ($i=1,2$). Then, with the decreasing filtrations defined by $h$, these
    cubes define isomorphic objects in the category
    $\mathfrak{C}_{h}$. \qed
\end{corollary}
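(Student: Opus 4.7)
The plan is to parallel the development for the $q$-filtration that led to Corollary~\ref{cor:invariant-in-Cq}. Two ingredients are needed: an isotopy invariance statement in $\mathfrak{C}_{h}$ analogous to Proposition~\ref{prop:isotopy-cube}, and the Reidemeister-move decomposition used in the proof just before Corollary~\ref{cor:invariant-in-Cq}.

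First, I would establish that if $(K,N)$ is a pseudo-diagram and $K'$ is isotopic to $K$ by an isotopy constant on $B(N)$, then for any choices of good auxiliary data, $\bC(K,N)$ and $\bC(K',N)$ are isomorphic in $\mathfrak{C}_{h}$. The chain map $\MATHBF{T}$ is constructed from the cylindrical cobordisms $T_{vu}$ exactly as in Section~\ref{sec:isotopy}. Running the estimate of Proposition~\ref{prop:C-is-filtered-by-h}, the components $T_{vu}$ have order $\geq 0$ with respect to $h$; the mod-$2$ parity argument of Lemma~\ref{lem:h-is-odd} is not needed here since chain maps in $\mathfrak{C}_{h}$ are only required to have order $\geq 0$. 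Two different choices of metric and secondary perturbation on the cobordisms produce chain-homotopic maps, and the corresponding chain-homotopies are built from moduli spaces over families one dimension higher; the same estimates give them order $\geq -1$, the order required for chain-homotopies in $\mathfrak{C}_{h}$. Thus $\MATHBF{T}$ is a well-defined morphism in $\mathfrak{C}_{h}$, and the standard composition argument applied to the reverse isotopy shows it is an isomorphism.

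Second, I would carry out the Reidemeister-move argument exactly as in the proof preceding Corollary~\ref{cor:invariant-in-Cq}, decomposing each Reidemeister move into three stages: drop the crossings lying inside the Reidemeister disk to pass to an auxiliary crossing-set $N''$, apply an isotopy rel $B(N'')$, then add new crossings to obtain the new diagram's crossing-set. Each single-crossing-drop is an isomorphism in $\mathfrak{C}_{h}$ by the Proposition immediately preceding Corollary~\ref{cor:invariant-in-Ch}; crucially, that Proposition only demands that both sides be pseudo-diagrams, with no small-self-intersection hypothesis (per the remark after Proposition~\ref{prop:C-is-filtered-by-h}). The analog of Corollary~\ref{cor:plane-diagram-criterion} provides the purely combinatorial verification that the intermediate crossing-sets arising in Reidemeister II and III moves are still pseudo-diagrams: resolving all crossings except one or two adjacent same-type crossings of a plane diagram leaves an unlink. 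Since any two diagrams for isotopic oriented links are connected by a finite sequence of Reidemeister moves and planar isotopies (the latter being instances of the isotopy result above), composing the resulting isomorphisms yields the corollary.

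The main subtlety, rather than obstacle, is that the definition of $\mathfrak{C}_{h}$ partitions the filtration orders into three tiers ($\geq 1$ for differentials, $\geq 0$ for chain maps, $\geq -1$ for chain homotopies), whereas the naive output of the dimension formula yields the same bound in all three cases. The parity argument of Lemma~\ref{lem:h-is-odd} bridges the gap for the differential $\bF$, upgrading order $\geq 0$ to $\geq 1$, and the same parity mechanism handles the off-diagonal pieces of $\MATHBF{T}$ and the preceding drop-a-crossing proposition. After these upgrades are applied, the Reidemeister argument becomes formal, as the filtration orders required by $\mathfrak{C}_{h}$ line up with what the moduli-space calculations produce at each stage.
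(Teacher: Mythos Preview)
Your proposal is correct and follows essentially the same route as the paper: the text immediately preceding the corollary says to ``continue just as in the case of the $q$-filtration,'' invoking the drop-a-crossing proposition together with the Reidemeister decomposition of Section~\ref{sec:reidemeister}, which is precisely what you do. Your observation that the small-self-intersection hypothesis is unnecessary here (only the pseudo-diagram property is needed for the intermediate crossing-sets) is also in line with the paper's remarks.
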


\section{Proof of the main theorems}
\label{sec:proofs}

The proofs of Theorem~\ref{thm:main}, Theorem~\ref{thm:invariance} and
Proposition~\ref{prop:order-of-map} 
are now just a matter of pulling together the above material with the
results of \cite{KM-unknot}, as we now explain.

Given a diagram $D$ for an oriented link $K$, we obtain from it a set
of crossings $N$. The pair $(K,N)$ is a pseudo-diagram, and after a
choice of perturbations we obtain a complex $\bC=\bC(K,N)$ which
carries a filtration by $\Z\times\Z$ arising from the gradings $h$ and
$q$. We have seen that the differential $\bF$ on $\bC$ has order $\ge (
1,0)$, meaning that it has order $\ge 1$
with respect to $h$ and $\ge 0$ with respect to $q$
(Propositions~\ref{prop:C-is-filtered-by-h} and
\ref{prop:C-is-filtered} respectively). 

The $h$ and $q$ gradings decompose $\bC$ as
\[
         \bC = \bigoplus_{i,j} \bC^{i,j}
\]
(where $j$ corresponds to the $q$ grading). Let us write 
$\bF^{0}$ for the sum of
those terms of  $\bF$ which preserve $q$: so for each $j$ we have
\[
 \bF^{0} :  \bigoplus_{i} \bC^{i,j} \to \bigoplus_{i} \bC^{i,j}.
\]

\begin{lemma}\label{lem:h-by-1}
    The map $\bF^{0}$ shifts the $h$ grading by exactly $1$, so that
\[
\bF^{0}(\bC^{i,j}) \subset \bC^{i+1,j}
\]
for all $i,j$. 
\end{lemma}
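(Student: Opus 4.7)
The plan is to decompose $\bF^0$ into its matrix components $f_{vu}$ (for $v \ge u$ in $\{0,1\}^N$) and show that each \emph{nonzero} $q$-preserving component shifts $h$ by exactly $1$.  With good auxiliary data the diagonal entries $f_{vv}=d_v$ vanish, so only pairs $v>u$ need be considered.  The grading $h$ is constant on each summand $C_v$, so every $f_{vu}$ shifts $h$ by the fixed amount
\[
\Delta h_{vu} := h|_{C_u} - h|_{C_v} = \sum_{c\in N}\bigl(v(c)-u(c)\bigr) - \tfrac{1}{2}\sigma(v,u),
\]
read directly from \eqref{eq:h-def} together with the additivity of $\sigma$; and the $q$-preserving part of $f_{vu}$ (if nonzero) inherits this same $h$-shift.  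It therefore suffices to show that $\Delta h_{vu}=1$ whenever the $q$-preserving component of $f_{vu}$ is nonzero.

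To this end I will combine three ingredients already in place.  First, Lemma~\ref{lem:max-is-2}, applied to the pseudo-diagram $(K,N)$, gives $\sigma(v,u)\le 2\sum_c(v(c)-u(c))$, which is exactly the statement $\Delta h_{vu}\ge 0$.  Second, the inequality \eqref{eq:q-f-order} established in the proof of Proposition~\ref{prop:C-is-filtered} can be rewritten as $\ord_q f_{vu} \ge \Delta h_{vu}-1$; hence if the $q$-preserving component of $f_{vu}$ is nonzero then $\ord_q f_{vu}\le 0$, which forces $\Delta h_{vu}\le 1$.  Third, Lemma~\ref{lem:h-is-odd} tells us that whenever $f_{vu}$ is nonzero the integer $\Delta h_{vu}$ is odd.

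Combining the three: $\Delta h_{vu}$ is an odd integer lying in $[0,1]$, so it equals $1$, and the lemma follows.  There is no serious obstacle here; the content is essentially bookkeeping that aligns the $h$-shift formula with the $q$-order estimate, both of which are controlled by the same combinatorial quantity $\sum_c(v(c)-u(c))-\tfrac{1}{2}\sigma(v,u)$ and differ only by the additive constant $-1$ that arose from the $\dim\bG_{vu}$ term in the dimension formula.  The subtlest step is matching the parity constraint from Lemma~\ref{lem:h-is-odd} with the numerical bounds, so that the interval $[0,1]$ collapses to the single value $1$.
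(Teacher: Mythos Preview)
Your argument is correct and follows the same overall strategy as the paper: bound the $h$-shift $\Delta h_{vu}$ from above using the $q$-order inequality \eqref{eq:q-f-order}, bound it from below, and conclude $\Delta h_{vu}=1$.

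The one difference is in how the lower bound is obtained. You work in the generality of an arbitrary pseudo-diagram, invoking Lemma~\ref{lem:max-is-2} to get $\Delta h_{vu}\ge 0$ and then the parity constraint of Lemma~\ref{lem:h-is-odd} to force the odd value $1$. The paper instead uses the standing hypothesis of this section that $N$ arises from a \emph{plane diagram}, so that all self-intersection numbers $\sigma(v,u)$ vanish. With $\sigma=0$ one has simply $\Delta h_{vu}=\sum_c(v(c)-u(c))\ge 1$ for $v>u$, and the inequality \eqref{eq:q-f-order} becomes exactly $\ord_q f_{vu}\ge \Delta h_{vu}-1$; the conclusion $\Delta h_{vu}=1$ then follows without any appeal to parity. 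Your route buys a little extra generality (it would go through for a pseudo-diagram with small self-intersection numbers), but since the lemma is only stated and used in the plane-diagram context, the paper's simplification is the more economical choice.
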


\begin{proof}
    We refer to the equations \eqref{eq:q-f-order} and \eqref{eq:h-def} . Because our
    collection of crossings comes from a diagram, all
    self-intersection numbers are zero, and the formula can therefore 
    be rewritten
    \begin{equation}\label{eq:q-f-order-rewrite}
        \ord_{q} f_{vu} \ge 
       h|_{C_{u}} - h|_{C_{v}} - 1.
    \end{equation}
    So for a non-zero map $f_{vu}$ that contributes to $\bF^{0}$,
    we have
    \[
    0 \ge h|_{C_{u}} - h|_{C_{v}} - 1,
    \]
    which implies that the difference in the $h$-gradings is exactly $1$, because we always
    have the reverse inequality.
\end{proof}

As
the leading term of $\bF$ with respect to the $q$-filtration, $\bF^{0}$ has
square zero. So $(\bC, \bF^{0})$ is a bigraded complex whose
differential has bidegree $(1,0)$. 

\begin{proposition}
    With its bigrading supplied by $h$ and $q$, the bigraded complex $(\bC,
    \bF^{0})$ is isomorphic to  Khovanov's complex $(\bC(D^{\dag}),
    d_{\kh})$ associated to the diagram $D^{\dag}$ for $K^{\dag}$
    (obtained from the diagram $D$ for $K$ by changing all
    over-crossings to under-crossings and vice versa).
\end{proposition}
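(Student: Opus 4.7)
The plan is to establish the isomorphism in three stages: identifying the underlying abelian groups, matching the two bigradings, and then identifying $\bF^0$ with Khovanov's $d_{\kh}$.

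First, at each vertex $v\in\{0,1\}^N$, the resolution $K_v$ is an unlink, and the isomorphism $\beta$ of \eqref{eq:rep-homology} gives $C_v \cong V^{\otimes b_0(K_v)}$, where $V=\langle\vp,\vm\rangle$. The components of $K_v$ correspond bijectively to the circles in the resolved diagram at the mirror vertex for $D^\dag$: passing from $D$ to $D^\dag$ swaps over- and under-crossings, which has the effect of swapping the roles of the $0$- and $1$-resolutions at each crossing. Thus the underlying abelian groups of $\bC(K,N)$ and $\bC(D^\dag)$ are identified after composing with this standard swap on the cube.

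For the bigrading, I would verify that the formulas \eqref{eq:h-def} and \eqref{eq:q-def} match Khovanov's conventions for $D^\dag$. Since $(K,N)$ comes from a plane diagram, all cobordisms $S_{vu}$ are orientable and every $\sigma(v,u)$ vanishes, so \eqref{eq:h-def} and \eqref{eq:q-def} reduce to
\[
h = -\textstyle\sum_c v(c) + n_-, \qquad q = Q - \textstyle\sum_c v(c) - n_+ + 2n_-.
\]
After swapping $n_+\leftrightarrow n_-$ and $v(c)\leftrightarrow 1-v(c)$ (to pass to $D^\dag$-conventions), and using that $Q$ on $V^{\otimes p}$ is the standard Khovanov $q$ minus the number of circles, these formulas become precisely Khovanov's grading shifts. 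This step is a direct bookkeeping check.

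Third, I would identify $\bF^0$ with $d_{\kh}$. By Lemma~\ref{lem:h-by-1}, the nonzero components of $\bF^0$ are the type-$1$ maps $f_{vu}$ with $v,u\in\{0,1\}^N$ differing at a single crossing $c$. The cobordism $S_{vu}$ is then a disjoint union of cylinders together with a single pair of pants (either merging two circles into one or splitting one into two). These $f_{vu}$ are the terms with $\kappa=0$ in the dimension formula \eqref{eq:dim-simple}: the $q$-preserving condition forces lowest action, by the same calculation that gave Lemma~\ref{lem:h-by-1}. By the discussion surrounding \eqref{eq:epsilon-terms}, the $\kappa=0$ part of the map on the unlink complexes coincides with the flat-connection contribution on the excision cobordism, which is exactly the $p$-fold tensor power of the standard model map $H_*(S^2)\to V$ sending the fundamental class to $\vp$ and the point class to $\vm$. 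Applied to a pair of pants, this Frobenius structure gives Khovanov's merge and split maps $m$ and $\Delta$ on the nontrivial factor, tensored with identities on the cylindrical factors. These are precisely Khovanov's edge maps for $d_{\kh}$.

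The main obstacle is the last step: pinning down that the leading-order part of the pair-of-pants map is exactly the standard Frobenius operation, with the correct signs. For the underlying linear map this follows from \eqref{eq:epsilon-terms} and the excision argument of \cite{KM-unknot}, reducing to a model computation on a standard pair of pants between two unknots. The sign conventions are governed by the $I$-orientations and the overall sign $(-1)^{\msign(v,u)}$ in the definition of $f_{vu}$; these are precisely the signs engineered in \cite{KM-unknot} to make the cube complex a chain complex and agree with Khovanov's sign assignment on the cube edges. Once that is verified, the isomorphism of bigraded complexes follows.
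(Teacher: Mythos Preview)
Your first two steps match the paper's approach: identifying the underlying groups via $\boldsymbol{\beta}$ and checking that, with $\sigma\equiv 0$ for a genuine diagram, the formulae \eqref{eq:h-def} and \eqref{eq:q-def} reduce to Khovanov's. The third step, however, has a real gap.

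You assert that the $\kappa=0$ part of a type-$1$ map $f_{vu}$, read in the $\boldsymbol{\beta}$-basis, is the Khovanov merge or split on the pair-of-pants factor. Your justification appeals to ``the discussion surrounding \eqref{eq:epsilon-terms}'', but that passage does not say this. It analyses the comparison map $\epsilon=\gamma\circ\beta^{-1}$ between two identifications of $\Isharp$ of a \emph{single} unlink, and the cobordism appearing there is the excision cobordism used to prove the K\"unneth formula, not the pair-of-pants $S_{vu}$. Nothing in that discussion computes the flat-connection contribution to the pair-of-pants map. To carry out your direct approach you would need a separate analysis of the $\kappa=0$ moduli space on $S_{vu}$, which you have not supplied; your sentence ``applied to a pair of pants, this Frobenius structure gives\ldots'' simply asserts the conclusion.

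The paper closes this gap by a different route. It invokes the result of \cite{KM-unknot} that, via the isomorphism $\boldsymbol{\gamma}$ built from the maps $\gamma$ of \eqref{eq:unknot-V-tensor-p}, the $E_{1}$ page of the $h$-spectral sequence is Khovanov's complex: $\boldsymbol{\gamma}$ intertwines $\bF_{e}$ (the part of $\bF$ raising $h$ by exactly~$1$) with $d_{\kh}$. Now $\boldsymbol{\gamma}$ does not respect $q$, but \eqref{eq:epsilon-terms} says precisely that $\boldsymbol{\gamma}-\boldsymbol{\beta}$ strictly raises $q$. Passing to the associated graded for the $q$-filtration therefore turns $\boldsymbol{\gamma}$ into $\boldsymbol{\beta}$ and, by Lemma~\ref{lem:h-by-1}, turns $\bF_{e}$ into $\bF^{0}$; hence $\boldsymbol{\beta}$ intertwines $\bF^{0}$ with $d_{\kh}$. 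That associated-graded trick is the missing idea: the identification of the edge maps with the Frobenius operations is already done in \cite{KM-unknot}, but only in the $\boldsymbol{\gamma}$-basis, and one must pass to $\gr_{q}$ to transport it to $\boldsymbol{\beta}$.
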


\begin{proof}
    Recall that the complex $\bC=\bC(N)$ has summands $C_{v}$ indexed
    by $v\in\{0,1\}^{N}$, and that $C_{v}$ is the chain complex that
  computes $\Isharp$ for the unlink $K_{v}$. We have chosen
  perturbations so that the differential on $C_{v}$ is zero. So if
  $K_{v}$ has $p(v)$ components, then (as in
  section~\ref{subsec:unlinks}) we have
\[
\begin{aligned}
    C_{v} &= \Isharp(K_{v}) \\
    &= H_{*}(S^{2})^{\otimes p(v)} 
\end{aligned}
\]
by the isomorphism \eqref{eq:rep-homology}.
The isomorphism depends on a choice of meridians, as well as a choice of a
standard Morse function on the product of $S^{2}$'s and a choice of
perturbations for the instanton equations. Via the
identification
\[
             s: H_{*}(S^{2}) \to V
\]
 of $H_{*}(S^{2})$ with the rank-2 group $V=\langle
\vp,\vm\rangle$, this becomes an isomorphism
\[
        s^{\otimes p} \comp \beta:  C_{v} \to V^{\otimes p(v)}
\]
Allowing for the
change in orientation convention, this gives an isomorphism of groups
\begin{equation}\label{eq:cube-iso-1}
         \boldsymbol{\beta}:  \bC \to \bC(D^{\dag})
\end{equation}
 We also see that the
   formulae for the $h$-grading and $q$-grading on $\bC$, given by
   \eqref{eq:h-def} and \eqref{eq:q-def} respectively, coincide with Khovanov's
   cohomological and quantum grading as defined in \cite{Khovanov}. (The terms
   $\sigma(v,o)$ in those formulae are absent in the case that the
   cube arises from a diagram, because all the self-intersection
   numbers are zero.)

    On the other hand, it 
     is shown in \cite{KM-unknot} that the spectral sequence
    associated to the filtered complex $(\bC, \bF)$, with its
    filtration by $h$, has $E_{1}$ page isomorphic to Khovanov's complex:
    that is, there is an isomorphism
\[
           (E_{1}, d_{1}) \stackrel{\cong}{\to} (\bC(D^{\dag}), d_{\kh})          
\]
   as groups with differential. With our choice of perturbations, 
    the elements in a given $h$-grading all
   have the same degree mod $2$, so the $d_{0}$ differential is absent
   and we have $E_{1} = \bC$. Furthermore, the $d_{1}$ differential is
   precisely the part of $\bF$ that increases $h$-grading by exactly
   $1$. Let us call this $\bF_{e}$. So we have
\begin{equation}\label{eq:cube-iso-2}
  \boldsymbol{\gamma}: (\bC, \bF_{e})  \stackrel{\cong}{\to} (\bC(D^{\dag}), d_{\kh}) .
\end{equation} 
  The isomorphism $\boldsymbol{\gamma}:\bC \to \bC(D^{\dag})$ in \eqref{eq:cube-iso-2} is
  not the same isomorphism as the map $\boldsymbol{\beta}$ in 
   \eqref{eq:cube-iso-1}. Rather, on each
  component $C_{v}$, it is defined by the map $\gamma$ from
  \eqref{eq:unknot-V-tensor-p}. 
   As explained in section~\ref{subsec:unlinks}, the isomorphism
   $\gamma$ and $\beta$ may differ. Both $\boldsymbol{\beta}$ and
   $\boldsymbol{\gamma}$ respect the homological grading $h$, but only
   $\boldsymbol{\beta}$ respects the $q$-grading.  The two
  isomorphisms differ by terms that strictly increase $q$, for on each
  $C_{v}$ we have
  \[
             \gamma - s^{\otimes p} \comp \beta = (\epsilon_{1} +
             \cdots) \comp \beta
   \]
   where $\epsilon_{1}$ etc.~are as in \eqref{eq:epsilon-terms}.
   In particular then, the map $\boldsymbol{\gamma}$ gives rise to a
   map on the associated graded objects with respect to the $q$
   filtration, which we write as 
   \begin{equation}\label{eq:cube-iso-3}
     \gr(\boldsymbol{\gamma}): \gr (\bC, \bF_{e})  \to (\bC(D^{\dag}), d_{\kh}) .
    \end{equation} 
   On the left, we can identify the bigraded complex $\gr (\bC,
   \bF_{e})$ with $(\bC, \bF')$, where $\bF'$ is obtained from
   $\bF_{e}$ by keeping only those summands that preserve $q$. This
   differential $\bF'$ coincides with $\bF^{0}$ by
   Lemma~\ref{lem:h-by-1}. So the map \eqref{eq:cube-iso-3} can be
   interpreted as a
   map
   \[
       \gr(\boldsymbol{\gamma}): (\bC, \bF^{0})  \to (\bC(D^{\dag}), d_{\kh}) 
    \]
   which respects the bigrading. The map $\bC \to \bC(D^{\dag})$ which
   appears here is the associated graded map arising from
   $\boldsymbol{\gamma}$, and is therefore equal to
   $\boldsymbol{\beta}$. 
Thus $\boldsymbol{\beta}$ in \eqref{eq:cube-iso-1} intertwines the
   differential $\bF^{0}$ with $d_{\kh}$.
\end{proof}

Theorem~\ref{thm:main} is a rewording of the above proposition: we
identify $\bC$ with $\bC(D^{\dag})$ by the isomorphism
\eqref{eq:cube-iso-1} and write the differential $\bF$ as
$d_{\sharp}$.

\begin{proof}[Proof of Theorem~\ref{thm:invariance}]
The theorem asserts that  $(\bC, \bF)$ is independent
of the choices made, up to isomorphism in the category
$\mathfrak{C}$. The choices here are again a diagram for $K$ with its
associated collection of crossings, and a choice of good auxiliary
data. 
Given two sets of choices and two corresponding objects
$(\bC_{1} , \bF_{1})$ and $(\bC_{2} , \bF_{2})$ in the category
$\mathfrak{C}$, we have seen already that these objects are isomorphic
in both the category $\mathfrak{C}_{h}$  and  $\mathfrak{C}_{q}$
(Corollaries \ref{cor:invariant-in-Ch} and \ref{cor:invariant-in-Cq}
respectively). An examination of the proofs shows that these objects
are isomorphic also in $\mathfrak{C}$. Indeed, the proofs were
obtained by exhibiting chain maps
\[
\begin{aligned}
 \Phi: (\bC_{1} , \bF_{1}) &\to (\bC_{2} , \bF_{2}) \\
  \Psi: (\bC_{2} , \bF_{2})& \to (\bC_{1} , \bF_{1})
\end{aligned}
\]
and chain-homotopy formulae
\[
           \begin{aligned}
                          \Phi \comp \Psi - \MATHBF{1} & = \bF \comp \Pi + \Pi \comp
           \bF  \\
                          \Psi \comp \Phi - \MATHBF{1} & = \bF \comp \Pi' + \Pi' \comp
           \bF  .
           \end{aligned}
\]
The point here is that the \emph{same} chain-maps and chain-homotopies
are used in the proofs of both Corollary~\ref{cor:invariant-in-Ch} and
Corollary~\ref{cor:invariant-in-Cq}. Thus from the proof of
Corollary~\ref{cor:invariant-in-Ch} we learn that with respect to the
$h$-filtration, the chain-maps have order $\ge 0$ and the
chain-homotopies have order $\ge -1$; while from the proof of
Corollary~\ref{cor:invariant-in-Cq} we learn that, with respect to the
$q$-filtration, the chain-maps have order $\ge 0$ and the
chain-homotopies have order $\ge 0$. It follows that $\Phi$ and $\Psi$
are mutually-inverse isomorphisms in the category $\mathfrak{C}$,
where the maps are chain-maps of order $\ge (0,0)$ up to
chain-homotopies of order $\ge (-1,0)$. This proves
Theorem~\ref{thm:invariance}. 
\end{proof}

\begin{proof}[Proof of Proposition~\ref{prop:order-of-map}]

Because the maps $\Isharp(S)$ can be computed as composite maps
when the cobordism $S$ is a composite, it is sufficient to treat
the cases that $S$ corresponds to the addition of a single handle,
of index $0$, $1$, or $2$. The cases $0$ and $2$ correspond to the
addition or removal of a single extra unlinked component, and are
straightforward. So we consider the case of index $1$.

In the index-1 case, we can form a link $K_{2}$ with plane diagram
having a crossing-set $N$ in such a way that $K_{1}$ and $K_{0}$ are
obtained from $(K_{2}, N)$ by resolving a distinguished crossing
$c_{*}\in N$ in two different ways. The links $K_{2}$, $K_{1}$ and
$K_{0}$ are to be oriented independently and arbitrarily. 
The set $N' = N\setminus\{c_{*}\}$
is then the set of crossings for diagrams of both $K_{1}$ and
$K_{0}$. By a straightforward generalization of the commutativity of
the square \eqref{eq:square-commutes}, we know that the map
$\Isharp(S)$ arises from a chain map
\[
           \bF_{10} : \bC_{1} \to \bC_{0}.
\]
We can regard this map as one term in the differential $\bF(K_{2},N)$ 
on the cube $\bC(K_{2},N)=\bC_{1}
\oplus \bC_{0}$ corresponding to $(K_{2}, N)$. (See
\eqref{eq:F-matrix}.) With respect to the $h$- and $q$-filtrations
which arise from $(K_{2}, N)$, we know that $\bF_{10}$ has order $\ge
(1,0)$. Let us denote the corresponding $h$- and $q$-gradings on the
abelian group $\bC_{1} \oplus \bC_{0}$ by $h_{2}$ and $q_{2}$. On
$\bC_{1}$ and $\bC_{0}$ we also have gradings $h_{1}, q_{1}$ and
$h_{0}, q_{0}$ respectively, arising from $(K_{1}, N')$ and $(K_{0}, N')$.

We examine the quantum gradings. With respect to $q_{2}$, the order of
$\bF_{10}$ is $\le 0$. On a summand $C_{v} \subset \bC_{1}$,  we have
\[
\begin{aligned}
    q_{2} - q_{1} &= -v(c_{*}) - (n^{2}_{+} - n^{1}_{+}) + 2
    (n^{2}_{-} - n^{1}_{-})\\
    &= -1 - (n^{2}_{+} - n^{1}_{+}) + 2 (n^{2}_{-} - n^{1}_{-}),
\end{aligned}
\]
where $n^{2}_{+}$ denotes the number of crossings in $N$ that are
positive for the chosen orientation of $K_{2}$ and so on. Similarly,
on a summand $C_{u} \subset \bC_{0}$ we have
\[
\begin{aligned}
    q_{2} - q_{0} &= -u(c_{*}) - (n^{2}_{+} - n^{1}_{+}) + 2
    (n^{2}_{-} - n^{1}_{-})\\
    &= - (n^{2}_{+} - n^{1}_{+}) + 2 (n^{2}_{-} - n^{1}_{-}),
\end{aligned}.
\]
So with respect to the filtrations defined by $q_{1}$ and $q_{0}$, the
order of $\bF_{10}$ is greater than or equal to the difference of the
above two expressions, which is
\[
 -1 + (n^{1}_{+} - n^{0}_{+}) - 2 (n^{1}_{-} - n^{0}_{-}).
\]
Since $\chi(S)=-1$, the proposition's assertion about the quantum
gradings will be proved if we show that
\[
          \frac{3}{2}(S\cdot S) = (n^{1}_{+} - n^{0}_{+}) - 2 (n^{1}_{-} - n^{0}_{-}).
\]
Since the number of crossings is the same for $K_{1}$ and $K_{0}$,
this is equivalent to:
\[
          S\cdot S = w_{1} - w_{0},
\]
where $w_{i} = n^{i}_{+} - n^{i}_{-}$ is the writhe of the diagram
$(K_{i}, N')$. At this point we should recall that $S\cdot S$ is
defined with respect to  framings of the boundaries $K_{1}$
and $K_{0}$ which have linking numbers zero with $K_{1}$ and
$K_{0}$. With respect to the blackboard framings of $K_{1}$ and
$K_{0}$, the self-intersection number of $S$ is zero. The writhe
measures the difference between the blackboard framing and the framing
with linking-number zero. So the result follows. The calculation for
the $h$-filtration is similar.
\end{proof}

\section{Examples}

\paragraph{Simple examples of pseudo-diagrams.}

To illustrate how one can work with pseudo-diagrams, we consider the
two oriented psuedo-diagrams in Figure~\ref{fig:pseudo-unlinks}.  The
first represents a 2-component unlink; the second is a Hopf link. Each
has one crossing, and Figure~\ref{fig:pseudo-unlinks} shows the two
different resolutions ($v=1,0$) in each case.  
\begin{figure}
    \centering
    \includegraphics[height=3 in]{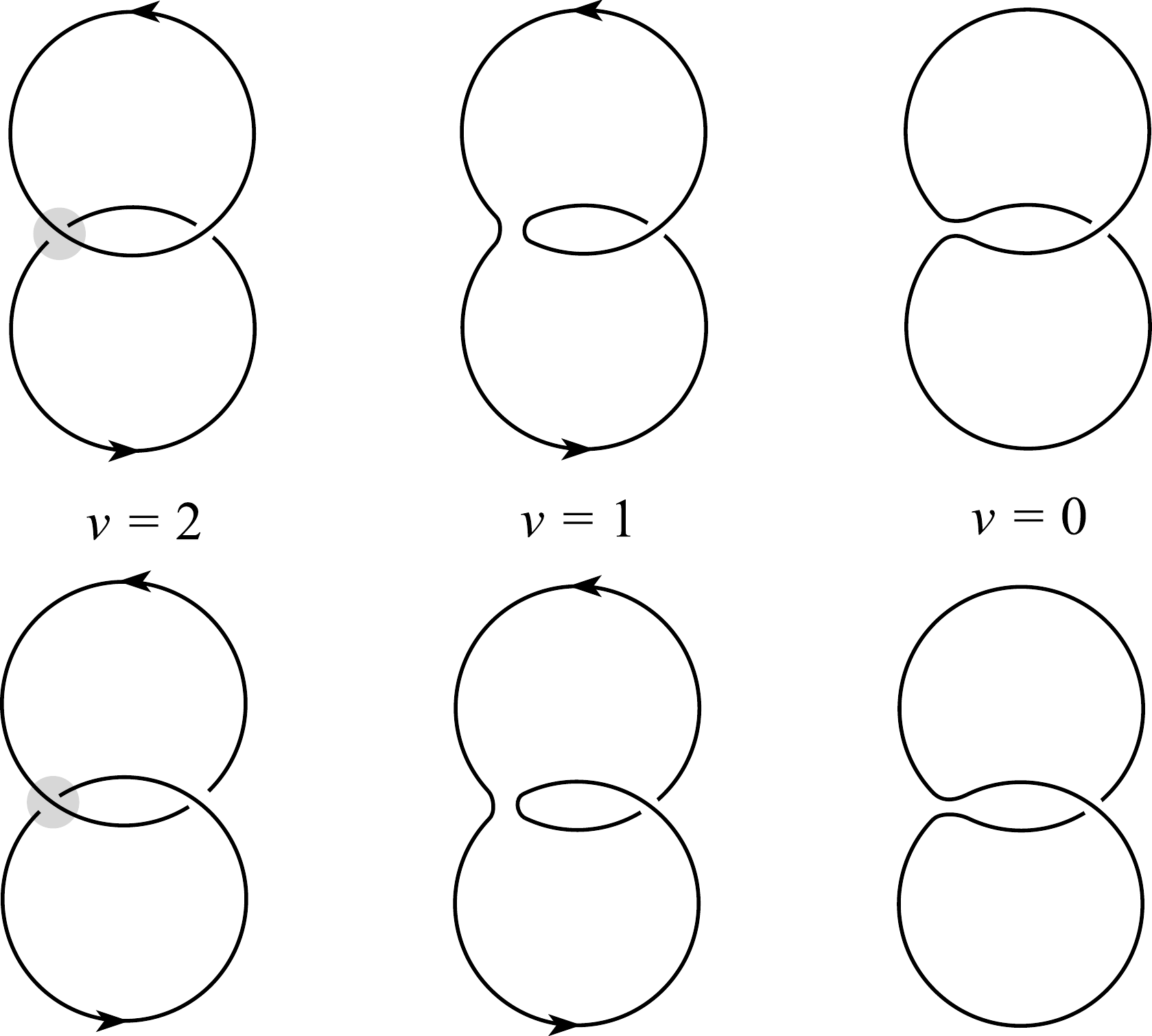}
    \caption{Pseudo-diagrams (left column) for an unlink and a Hopf
      link, each with one crossing. The resolution with the arrows is
      the oriented one in each case.}
    \label{fig:pseudo-unlinks}
\end{figure}

These examples show how the self-intersection number comes into play
when there are non-orientable cobordisms involved.  In the first
diagram we have $n_+=0$, $n_-=1$. (The total number of crossings is
$1$ in the pseudo-diagram.) The 1-resolution is the oriented
resolution.  The surface $S_{10}$ has $S_{10}\cdot S_{10}=2$, and hence
$\sigma(1,1)=0$ while $\sigma(0,1)=-2$.  Thus we see from the
definition of the $h$ and $q$ gradings on the cube $\bC$ that the
resolution with $v=1$ contributes two generators to the cube in
bi-gradings $(0,2)$ and $(0,0)$.  From $v=0$ there are also two
contributions, now in bi-gradings $(0,-2)$ and $(0,0)$.  Since the
$h$-gradings are all even, there can be no differential. The resulting
rank-4 group with its $h$- and $q$-gradings agrees with the Khovanov
homology of the unlink.

The Hopf link diagram still has $n_+=0$, $n_-=1$
but now $S_{10}\cdot S_{10}=-2$, and the $1$-resolution is still the
oriented resolution.  From $v=1$ there are two generators in
bi-gradings $(-1,-3)$ and $(-1,-1)$. From $v=0$ there are also two
generators, now with bi-gradings $(1,1)$ and $(1,3)$.  This
reproduces the Khovanov homology of the Hopf link (with the given
orientations.)

\paragraph{An example of a non-trivial differential, the $(4,5)$-torus knot.}

We will deal with the reduced case $\Inat(K)$ and work with rational
coefficients.  For the torus knot $T(4,5)$, the reduced Khovanov homology
is known and has rank 9. Below is a plot indicating with bullet-points
where the non-zero
groups are. These are plotted in the plane with coordinates $i$ and $j-i$, where
$i$ is the $h$-grading and $j$ is the $q$-grading. 

\begin{center}
\begin{sseq}[grid=go]{0...9}{11...16}
\ssmoveto{0}{11}
\ssdropbull
\ssmoveto{2}{13}
\ssdropbull
\ssmoveto{4}{13}
\ssdropbull
\ssmoveto{6}{13}
\ssdropbull
\ssname{b}
\ssmoveto{3}{14}
\ssdropbull
\ssname{a}
% \ssstroke{a}{b}
% \ssinversearrowhead
\ssmoveto{8}{15}
\ssdropbull
\ssname{d}
\ssmoveto{5}{16}
\ssdropbull
\ssname{c}
% \ssstroke{c}{d}
% \ssinversearrowhead
\ssmoveto{7}{16}
\ssdropbull
\ssmoveto{9}{16}
\ssdropbull
\end{sseq}
\end{center}

Starting from any diagram, the reduced Khovanov homology is a page of
both the $h$- and $q$-spectral sequences converging to $\Inat(K)$, and
we can ask where the differentials (if any) may be for $K=T(4,5)$.
The grading $j-i-1$ on the Khovanov homology reduces to the canonical
mod $4$ grading on $\Inat$. (See \cite[section 8.1]{KM-unknot}.)  From
the figure, we read off the Betti numbers for the mod $4$ grading on
the Khovanov homology as
\begin{equation}\label{eq:kh-betti}
             3, \quad 1, \quad 2, \quad 3
\end{equation}
in gradings $0$, $1$, $2$ and $3$ mod $4$ respectively.
The
differentials in the spectral sequence all have degree $-1$ mod $4$
with respect to this grading. In these coordinates the higher
differentials in the spectral sequence are also constrained by
\[
 \Delta j-\Delta i  \ge -1,
\]
where $\Delta i$ and $\Delta j$ denote the change in the $h$ and
$q$-gradings, as follows from \eqref{eq:q-f-order}. (The $S\cdot S$
term is absent in \eqref{eq:q-f-order} because we are dealing with a
diagram rather than a pseudo-diagram.)

The generators of the instanton complex that computes $\Inat(K)$ are
obtained from the representation variety
\[
     \Rep^{\natural}(K,\bi) := \Rep(K^{\natural}, \bi) / \mathit{SO}(3),
\]
which can also be regarded as the fiber of the map $\Rep(K, \bi)\to
S^{2}$ given by the holonomy around a chosen meridian.  In the case of
$T(4,5)$, one can show that this representation variety is a union of
an isolated non-degenerate point (corresponding to the reducible
representation) and four Morse-Bott circles (corresponding to
irreducibles).  Thus after a small perturbation, we see that the
instanton complex that computes $\Inat(K)$ has 9
generators. Since 9 is also the rank of the Khovanov homology, it
seems at first possible that the homology groups are isomorphic. But a
closer examination shows that the four Morse-Bott circles contribute
the same rank to each of the gradings mod $4$ in the instanton
complex. (To compute the relative Morse index of the $4$ circles, one
can study the representation varieties $\Rep^{\natural}(T(4,5),\alpha)$, defined
analogously to $\Rep^{\natural}(T(4,5),\bi)$ but with holonomy around the
meridian of the knot given by
\[
       \exp 2 \pi i
       \begin{pmatrix}
           -\alpha & 0 \\ 0 & \alpha
       \end{pmatrix}
\] 
for $\alpha\in [0,1/4]$. As $\alpha$ increases, circles of critical
points are emitted from or absorbed into the unique reducible. The
Morse indices can be read off from the sequence of these events.)  So
if the instanton homology had rank $9$, then the Betti numbers
\eqref{eq:kh-betti} for the Khovanov homology would differ from each
other by at most $1$ (corresponding to the contribution of the
isolated point in the representation variety). Since this is not the
case, if follows that the instanton homology has rank at most $7$.

On
the other hand, 
according to \cite[Proposition 1.4]{KM-unknot},  for a knot $K$ there is an
isomorphism between $\Inat(K)$ and the sutured instanton knot
Floer homology group $\mathit{KHI}(K)$, respecting the mod $4$
grading; and
from \cite[Corollary~1.2]{KM-alexander} we know that
the total rank of $\mathit{KHI}(K)$ is not less than the sum of the absolute values of the
coefficients of the Alexander polynomial, which for $T(4,5)$ is
\[
T^{6}-T^{5}+T^{2}-1+T^{-2}-T^{-5}+T^{-6}.
\]
This gives a lower bound of $7$ for the rank of $\Inat(K)$. We deduce
that in fact the rank is exactly $7$. So the spectral sequence has a
single non-zero differential which must have rank $1$.

The results of \cite{KM-alexander} give a little more information. The
coefficients of the Alexander polynomial arise as Euler characteristics of
the generalized eigenspaces of an operator $\mu$ on $\mathit{KHI}(K)$
which has degree $2$ with respect to the mod $4$ grading. It follows
that the Betti numbers in mod-$4$ gradings $l$ and $l+2$ are equal,
except for an offset term arising from the eigenvalue $0$. The
dimension of the generalized $0$-eigenspace is $1$, because it
corresponds to the coefficient of $T^{0}$. Inspecting
\eqref{eq:kh-betti} again, we see that the only possibility is that
the Betti numbers of the mod $4$ grading on $\Inat(K)$ are
\[
     2, \quad 1, \quad 2, \quad 2.
\]
The $0$-eigenspace contributes $1$ to the last of these. The
differential in the spectral sequence goes from $0$ to $3$ in the mod
$4$ grading. In terms of the diagram above, this means that the
differential must go from the row $j-i=13$ to the row $j-i=16$. The
authors do not know in which pages of the two spectral sequences the
differential appears. It is also not yet apparent what the ranks of the
associated graded groups are, for the $h$- and $q$-filtrations on $\Inat(K)$.

\bibliographystyle{abbrv}
\bibliography{q-grading}

\end{document}